\newtheorem{theorem}{Theorem}[subsection]
\newtheorem{prop}[theorem]{Proposition}
\newtheorem{conjecture}[theorem]{Conjecture}
\newtheorem{heuristic}[theorem]{Heuristic}
\newtheorem{corollary}[theorem]{Corollary}
\theoremstyle{definition}
\newtheorem{example}[theorem]{Example}
\theoremstyle{definition}
\newtheorem{remark}[theorem]{Remark}
\theoremstyle{definition}
\newtheorem{lemma}[theorem]{Lemma}
\theoremstyle{definition}
\newtheorem{defi}[theorem]{Definition}
\newcommand\bsni{\bigskip\noindent}
\newcommand\bbc{\mathbb{C}}
\newcommand\bbd{\mathbb{D}}
\newcommand\bbp{\mathbb{P}}
\newcommand\bbr{\mathbb{R}}
\newcommand\bbs{\mathbb{S}}
\newcommand\cA{\mathcal{A}}
\newcommand\cC{\mathcal{C}}
\newcommand\cE{\mathcal{E}}
\newcommand\cH{\mathcal{H}}
\newcommand\cJ{\mathcal{J}}
\newcommand\cL{\mathcal{L}}
\newcommand\cM{\mathcal{M}}
\newcommand\cN{\mathcal{N}}
\newcommand\cO{\mathcal{O}}
\newcommand\cP{\mathcal{P}}
\newcommand\cR{\mathcal{R}}
\newcommand\cX{\mathcal{X}}
\newcommand\cY{\mathcal{Y}}
\newcommand\cZ{\mathcal{Z}}
\newcommand\K{\mathrm{K}}
\newcommand\R{\mathrm{R}}
\newcommand\mi{^{-1}}
\newcommand\an{^{\mathrm{an}}}
\newcommand\pdisc{\overline\bbd^*}
\newcommand\disc{\overline\bbd}
\DeclareMathOperator{\ord}{ord}
\DeclareMathOperator{\Specop}{Spec}
\newcommand\Spec[1]{\Specop \mathrm{#1}}
\newcommand\norm{\|\cdot\|}
\newcommand\PSH{\mathrm{PSH}}
\newcommand\MA{\,\mathrm{MA}}
\newcommand\refmetric{\phi_{\mathrm{ref}}}
\newcommand\na{^{\mathrm{NA}}}
\newcommand\path{\mathfrak{P}}
\newcommand\ideal{\mathfrak{a}}
\author{REBOULET Rémi}
\title{The space of finite-energy metrics over a degeneration of complex manifolds.}
\begin{document}
\maketitle

\begin{abstract}
\noindent Given a degeneration of compact projective complex manifolds $X\to \bbd^*$ with meromorphic singularities, and a relatively ample line bundle $L$ on $X$, we study spaces of plurisubharmonic metrics on $L$, with particular focus on (relative) finite-energy conditions. We endow the space $\hat \cE^1(L)$ of relatively maximal, relative finite-energy metrics with a $d_1$-type distance given by the Lelong number at zero of the collection of fibrewise Darvas $d_1$-distances. We show that this metric structure is complete and geodesic. Seeing $X$ and $L$ as schemes $X_\K$, $L_\K$ over the discretely-valued field $\K=\bbc((t))$ of complex Laurent series, we show that the space $\cE^1(L_\K\an)$ of non-Archimedean finite-energy metrics over $L_\K\an$ embeds isometrically and geodesically into $\hat \cE^1(L)$, and characterize its image. This generalizes previous work of Berman-Boucksom-Jonsson, treating the trivially-valued case. We investigate consequences regarding convexity of non-Archimedean functionals.
\end{abstract}

\tableofcontents

\section*{Introduction.}

\paragraph{Overview.}

The study of rays of plurisubharmonic metrics has been, for the past decade, a central component in successful approaches to important conjectures in complex geometry. Given a polarized compact Kähler manifold $(X,L)$, we define a psh ray to be a psh metric on $L\times \pdisc$ over the trivial product $X\times\pdisc$, which is furthermore invariant under the usual action of $\bbs^1$. Setting the variable $t=-\log|z|$, we obtain a "psh curve"
$$[0,\infty)\ni t\mapsto \phi_t,$$
and the study of the asymptotics of this curve (and of various quantities it can be evaluated against) plays a crucial role in many questions. A particular striking application has been an alternative proof of the Kähler-Einstein case of the Yau-Tian-Donaldson conjecture due to Berman-Boucksom-Jonsson (\cite{bbj}), and further advances in the general cscK case by Li (\cite{liytd}), both of which are major inspirations for this current work. In parallel, Darvas-Lu have started a very interesting investigation of the metric properties of the space of such rays (\cite{darlurays}), which has found many interesting developments. 

\bsni One side of the story, very apparent in the aforementioned proof of the Yau-Tian-Donaldson conjecture, is the introduction of valuative or non-Archimedean techniques to study asymptotics in Kähler geometry. Namely, to a psh ray $t\mapsto \phi_t$ as above (assuming a condition of maximality which roughly asserts that it bounds by above all other psh segments with the same endpoint values on all compact sets of the positive half-line), one can associate a non-Archimedean function $\phi\na$, defined on the Berkovich analytification $X\an$ of $X$ with respect to the trivial absolute value on $\bbc$. Without going into too much detail, $X\an$ can be essentially described as the space of valuations on $\bbc(X)$, which is then compactified using semivaluations, so that $\phi\na$ can be thought of as capturing the asymptotic singularities of $\phi$ along all possible birational models of $X$. A beautiful result is that a class of distinguished geodesic rays (the ones of interest for K-stability!) are in one-to-one correspondance with finite-energy non-Archimedean functions on $X\an$.

\bsni In the present work, we generalize such results to the wider setting of psh metrics on arbitrary families over the punctured disc. Already in the isotrivial case without $\bbs^1$-action, these spaces have garnered interest (see e.g. \cite{donaldsondiscs}, \cite{rwndiscs}), but our results also hold for arbitrary degenerations with "meromorphic" singularities. This setting is of interest for e.g. studying families of Kähler-Einstein metrics, a topic that has been studied for well over a decade (see \cite{schumacher}, \cite{tsujidynamical}, \cite{cgp1}, \cite{cgp2}, \cite{dinezzafamilies}...), but also in works related to the Kähler-Ricci flow, to collapsing families of Calabi-Yau manifolds... We refer the reader to Section \ref{subsect_misc} for some discussion and new questions in various directions. A main interest of allowing such general families lies, for example, in Donaldson's example (\cite[Section 5.1]{donbstability}) of a special fibre not accessible via test configurations.

\paragraph{The complex side of the story: the space of degenerating metrics.}

We will be working on meromorphic degenerations (or simply degenerations) $\pi:X\to\pdisc$. Those are projective holomorphic submersions onto the punctured disc, with compact Kähler fibres, subject to the additional condition that they are isomorphic to an object of the form $\cX-\cX_0$, where $\cX\to \disc$ is a normal complex analytic space and $\cX_0$ is its fibre over $0$. The fibres in such families are smoothly isomorphic, but they are not in general biholomorphic: one can imagine such families as a degeneration of the complex structures of the underlying smooth manifold. Such a $\cX$ is what we call an analytic model (or simply a model) of $X$. If $\cL$ is a line bundle on $\cX$, isomorphic to $L$ away from the central fibre and compatible with the above identifications, we say that $(\cX,\cL)$ is a model of $(X,L)$.

\bsni We will be interested in spaces of psh metrics on line bundles $L$ over a degeneration $X$. 

\bsni Fixing a fibre $X_z$ for the moment, Darvas introduces in \cite{darv} a $L^1$-type metric structure on the space of finite-energy metrics $\cE^1(X_z,L_z)$, where the distance between two metrics $\phi_{0,z},\phi_{1,z}$ is given by
$$d_{1,z}(\phi_{0,z},\phi_{1,z})=E(\phi_{0,z})-E(\phi_{1,z})+2E(P(\phi_{0,z},\phi_{1,z})).$$
The term $E$ is the Monge-Ampère (or Aubin-Yau) energy of a psh metric, which is a normalized primitive of the Monge-Ampère operator
$$\phi_z\mapsto (dd^c\phi_z)^d$$
for $\phi_z$ smooth, and which is extended to general finite-energy metrics via decreasing limits, as in \cite{bbgz}.
On the other hand, the $P$ term is the "rooftop" envelope
$$P(\phi_{0,z},\phi_{1,z})=\sup\{\phi_z\in\PSH(X_z,L_z),\,\phi_z\leq\phi_{0,z},\phi_{1,z}\},$$
which generalizes the convex envelope of the minimum of two convex functions. If the two metrics in the left-hand side have finite energy, then their rooftop envelope can also be shown to have finite energy, so that all the terms in the above expression are finite.

\bsni We wish to define a distance on the space of metrics on $L$ (with fibrewise finite energy) via taking the Lelong number of the collection of fibrewise distances above. This is to be understood as a generalized (signed) Lelong number as follows: given a subharmonic function $f$ such that there exists a real number $a$ making $f+a\log|z|$ bounded above, we define
$$\hat f = \lim_{r\to 0}\frac{\sup_{|z|=r}f(z)+a\log|z|}{-r}-a.$$
Assuming $z\mapsto d_1(\phi_{0,z},\phi_{1,z})$ to satisfy this claim, we would then define
$$\hat d_1(\phi_0,\phi_1):=\lim_{r\to 0}\frac{\sup_{|z|=r}d_1(\phi_{0,z},\phi_{1,z}+a\log|z|}{-r}-a.$$
As the reader can see, we have made two assumptions: that  the distance between $\phi_{0,z}$ and $\phi_{1,z}$ has at most logarithmic growth, and that this distance function is subharmonic on the disc.

\bsni The first assumption is implied by a natural condition on such metrics $\phi$, which we abusively also call \textit{logarithmic growth}, and can equivalently be described as requiring that $\phi$ extends plurisubharmonically over the central fibre of some analytic model of $X$. This is a very general condition and really the necessary minimum to ask for; therefore, we will denote $\PSH(L)$ the space of psh metrics of logarithmic growth on $L$. In the case of rays, this corresponds to the usual linear growth condition (as in \cite{bbj}).

\bsni Regarding the subharmonicity statement, if one looks at the $\bbs^1$-invariant case, similar statements (see \cite{bdl}) require that rays be \textit{geodesic}. This suggests that in our setting, we must look at maximality conditions coming from the base. In our setting, this is understood in a "relative" pluripotential theory sense, i.e. a metric $\phi$ with logarithmic growth on $L$ is relatively maximal if for any relatively compact open set $U$ on the base $\pdisc$, $\phi|_{\pi\mi(U)}$ is larger than all psh metrics on $L|_U$ bounded above by $\phi$ on $\partial U$. The space of fibrewise finite-energy, relatively maximal, logarithmic growth metrics on $L$ is denoted by $\cE^1(L)$. Note that, although there appears to be many assumptions, this is exactly the generalization of the setting considered in \cite{bbj} and \cite{darlurays}; more precisely, assuming $\bbs^1$-invariance, our space $\hat\cE^1(L)$ corresponds with the space $\cR^1(L)$ of \cite{darlurays}, and our distance $\hat d_1$ corresponds with the distance
$$d_1^\cR(\phi_0,\phi_1):=\lim_{t\to\infty}t\mi d_1(\phi_{0,t},\phi_{1,t}).$$
We then show the following:

\bsni \textbf{Theorem A (\ref{thm_1mainthmsepdiscs}, \ref{thm_1completeness}, \ref{thm_1geodesics}).}
The space $(\hat\cE^1(L),\hat d_1)$ is a complete, geodesic metric space, admitting maximal $\hat d_1$-geodesic segments, constructed as collections of fibrewise geodesic segments.

\bsni The term \textit{maximal segment} in the statement regarding geodesics is to be understood as: the largest segment $t\mapsto\phi_t\in\hat\cE^1(L)$ joining the endpoints which is plurisubharmonic in the variable $t$. In order to avoid confusion with the \textit{relatively maximal metrics} defined above, we will sometimes speak of distinguished segments instead.

\paragraph{Going non-Archimedean.}

As explained for example in \cite{favre} and \cite{berkhodge} (see also \cite{bjtrop}, \cite{yangli}, \cite{shivaprasad}), one can interpret a projective (meromorphic) degeneration $X$ as a projective variety over the field $\K=\bbc((t))$. This is a discretely-valued field, i.e. the possible values of its usual valuation form a discrete additive subgroup of $\bbr$.

\bsni As in the trivially valued case discussed at the very beginning, we define a metric $\phi\an$ on the Berkovich analytification $L_\K\an$ of $L$ seen as a $\K$-line bundle, associated to a metric $\phi$ in $\PSH(L)$, which captures the algebro-geometric information of the singularities of $\phi$ via generic Lelong numbers along vertical divisors in models of $X$.

\bsni We prove that the metric $\phi\na$ is plurisubharmonic in the non-Archimedean sense (see Section \ref{sect_nappt}), which roughly means that it satisfies the statement of Demailly's regularization Theorem, i.e. can be approximated by a decreasing net of Fubini-Study metrics.

\paragraph{Energies and Deligne pairings.}

More structure arises when one considers finite-energy metrics. For this, it will be more convenient to express complex and non-Archimedean Monge-Ampère energies as metrized Deligne pairings (see Section \ref{sect_deligne}). Briefly, one can associate to a flat projective morphism of complex manifolds
$$\pi:X\to Y$$
of relative dimension $d$, and to $d+1$ pairs $(L_i,\phi_i)$ of relatively ample line bundles $L_i$ over $X$ together with continuous psh metrics $\phi_i$, a line bundle
$$\langle L_0,\dots,L_d\rangle_{X/Y}$$
together with a metric
$$\langle \phi_0,\dots,\phi_d\rangle_{X/Y}$$
in a multi-additive and symmetric fashion. Furthermore, this construction satisfies
\begin{itemize}
\item a change of metric formula: given another continuous psh metric $\phi_0'$ on $L_0$,
\begin{equation}
\langle \phi_0,\dots,\phi_d\rangle_{X/Y} - \langle \phi_0',\dots,\phi_d\rangle_{X/Y} = \pi_*\left((\phi_0-\phi_0')(dd^c\phi_1\wedge\dots\wedge dd^c\phi_d)\right);
\end{equation}
\item and a pushforward formula:
\begin{equation}
dd^c\langle \phi_0,\dots,\phi_d\rangle_{X/Y}=\pi_*(dd^c\phi_0\wedge\dots\wedge dd^c\phi_d).
\end{equation}
\end{itemize}
If $Y$ is a point, we omit the subscript $\cdot_{X/Y}$, and upon contemplating the change of metric formula, one can see that the (relative) Monge-Ampère energy between two metrics $\phi_0$ and $\phi_1$ may be written as
$$\langle \phi_0^{d+1}\rangle - \langle \phi_1^{d+1}\rangle,$$
suggesting that the Monge-Ampère energy could be seen as a genuine metric $\langle \phi^{d+1}\rangle$.

\bsni In the present article, we also extend the Deligne pairing construction in the complex setting, from continuous psh metrics to fibrewise finite-energy metrics. We direct the interested reader to Section \ref{sect_deligne}.

\paragraph{Perfecting the complex/non-Archimedean correspondance.}

In the space of non-Archimedean psh metrics, one can also define the subclass of finite-energy metrics $\cE^1(L\an)$, by defining for example the Monge-Ampère energy $E\na$ as a non-Archimedean Deligne pairing (using the machinery of \cite{be20}). (Other approaches, which all give the same functional, include the intersection theory-based formalism of Gubler (\cite{gublerinventionestropical}, and the locally tropical approach of Chambert-Loir and Ducros (\cite{cld}) based on the superforms of Lagerberg (\cite{lagerberg}).) A natural question is now whether the Monge-Ampère energy is equivariant under the "Archimedean to non-Archimedean" map: that is, whether the slope at infinity of the Monge-Ampère energy along a metric $\phi$ in $\hat\cE^1(L)$ coincides with the non-Archimedean Monge-Ampère energy of its associated non-Archimedean metric $\phi\na$, which we can write (in a way that will be made precise in Remark \ref{rem_namap}) as
\begin{equation}\label{eq_ext}E(\phi)\na=E\na(\phi\na)
\end{equation}
(seeing Monge-Ampère energies as Deligne pairings, as previously), where the term on the left can be understood as a generalized Lelong number at zero of the Monge-Ampère energy along $\phi$. It turns out that, in general, we only obtain an inequality
$$E(\phi)\na\leq E\na(\phi\na),$$
as in Corollary \ref{coro_eonemapping}.

\bsni It becomes natural to take our interest to the class of metrics in $\hat\cE^1(L)$ for which equality holds. We define them as "hybrid maximal" metrics, which correspond to the maximal psh geodesic rays of \cite{bbj}. We give equivalent characterizations of such metrics, in particular using more "complex pluripotential"-theoretic notions such as an extremal characterization. Denoting the space of hybrid maximal metrics by $\hat\cE^1_{\mathrm{hyb}}(L)\subset \hat\cE^1(L)$, we then prove that it can be completely realized by the space $\cE^1(L\an)$.

\bsni \textbf{Theorem B (\ref{thm_isometry}, \ref{thm_uniquenesstropmaxdisc} \ref{prop_functionals}).}
\begin{itemize}
\item There is an isometric embedding of $(\cE^1(L\an),d_1\na)$ into $(\hat\cE^1(L),\hat d_1)$ with image $\hat\cE^1_{\mathrm{hyb}}(L)$; this image is characterized as the class of metrics for which \eqref{eq_ext} holds.
\item A psh segment in $\hat\cE^1_{\mathrm{hyb}}(L)$ is a psh geodesic if and only if its image in $\cE^1(L_\K\an)$ is a psh geodesic in the sense of \cite{reb20b}.
\item There is a general "plurifunctional extension" property in this space, as follows: suppose given $d+1$ relatively ample line bundles $L_i$ on $X$. Then, for any $(d+1)$-uple of metrics $\phi_i\in\hat\cE^1_{\mathrm{hyb}}(L_i)$, we have
$$(\langle\phi_0,\dots,\phi_d\rangle_{X/\pdisc})\na=\langle\phi\na_0,\dots,\phi\na_d\rangle.$$

%\item In general, the projection
%$$\hat\cE^1_0(L)\twoheadrightarrow \cE^1(L\an)$$
%is contracting.
\end{itemize}

\bsni This result states that degenerations of psh metrics that are interesting from the "energy functional" point of view can be studied using purely non-Archimedean techniques; conversely, we may also deduce properties of non-Archimedean metrics from their complex counterparts. A striking example is a consequence of the second statement regarding geodesics: it implies that the non-Archimedean geodesics constructed by the author in previous works (\cite{reb20b}) can be realized as natural limits of complex geodesics! In Section \ref{subsect_misc}, we extract from this a general and very practical principle for proving convexity results for non-Archimedean functionals provided we know convexity of their complex counterparts (Heuristic \ref{heur_convexity}).

\bsni We also derive some new remarks and questions regarding families of Kähler-Einstein metrics in Section \ref{subsect_misc}. Since the aforementioned result concerning non-Archimedean limits of geodesics is also new in the trivially valued case. In Section \ref{subsect_misc}, we explain how our results encompass the trivially-valued case.

\bsni Furthermore, this allows us (Examples \ref{exa_ding} and \ref{exa_mabuchi}) to show that the non-Archimedean Ding energy is convex on $\cE^1(L_\K\an)$. Convexity of the Ding energy appears, in the trivially-valued case for continuous psh metrics, in \cite{blumliu}, where this is used to prove a very general version of the Hamilton-Tian conjecture. We expect that our results will play an important role in such questions in the case of non-equivariant degenerations.

\bsni By similar arguments, we can also show that the non-Archimedean K-energy is convex on $\cE^1(L\an)$, if one assumes the entropy approximation conjecture of \cite{bjkstab}.

\paragraph{Organization of the paper.}

In Section \ref{sect_sect1}, we study spaces of psh metrics with relative finite energy in the general relative case. In Section \ref{sect_sect15}, we specialize to degenerations over the punctured disc, and study our space $\hat\cE^1(L)$, building up to Theorem A. In Section \ref{sect_sect2}, we construct the "Archimedean to non-Archimedean" map, after recalling some notions of non-Archimedean pluripotential theory. giving Theorem C. In Section \ref{sect_sect3}, we study the non-Archimedean limits of energy functionals, then hybrid maximal metrics, proving Theorem B. We also study the trivially-valued case in Section \ref{subsect_36}, connections with convexity of non-Archimedean functionals, and discuss other applications regarding families of Kähler-Einstein metrics.

\paragraph{Acknowledgements.} The author wishes to express his gratitude his advisors, Sébastien Boucksom and Catriona Maclean. He greatly thanks Tamas Darvas for some interesting discussions, and clarifications on the geodesic ray case, especially pertinent to the last Section of this manuscript. He also thanks Robert Berman, Bo Berndtsson, Vincent Guedj, Mattias Jonsson, Steve Zelditch, and Mingchen Xia for various discussions, as well as remarks on a preliminary version of this manuscript.

\newpage

\section{Relative finite-energy spaces.}\label{sect_sect1}

\subsection{Reminders on finite-energy spaces.}

We begin with some reminders concerning $d_1$-structures on spaces of finite-energy metrics in the classical setting. We thus consider a fixed \textit{compact} Kähler manifold $X$, with $\dim X =: d$, endowed with an ample line bundle $L$. %We will return to the case of degenerations a bit later.

\bsni Consider two metrics $\phi_0$, $\phi_1\in C^0\cap \PSH(L)$. Their relative Monge-Ampère energy is the quantity
$$E(\phi_0,\phi_1)=\frac{1}{(L^d)(d+1)}\sum_{i=0}^{d}\int_X (\phi_0-\phi_1)\,(dd^c\phi_0)^i\wedge(dd^c\phi_1)^{d-i}.$$
Note that we have a cocycle identity
$$E(\phi_0,\phi_1)=E(\phi_0,\phi')+E(\phi',\phi_1)$$
for any other continuous psh metric $\phi'$. Having fixed a continuous psh metric $\refmetric$ on the right, $E(\phi):=E(\phi,\refmetric)$ can be seen as an operator on $C^0\cap\PSH(L)$ which is also a primitive of the Monge-Ampère operator $\MA:\phi\mapsto (dd^c\phi)^d$. It admits a (possibly infinite) extension to $\PSH(L)$ via
$$E(\phi)=\lim_{k\to\infty}E(\phi_k),$$
where $\phi_k$ is a net of continuous psh metrics decreasing to $\phi$, which always exists by \cite{blockolo}, \cite{demcurrents}. The space of finite-energy metrics is the space
$$\cE^1(L)=\{\phi\in \PSH(L),\,E(\phi)\text{ is finite}\}.$$
By the cocycle identity, this space does not depend on the choice of a reference metric. From the work of Darvas, we know this space to admit (amongst others) a $d_1$-type complete metric space structure via
$$d_1(\phi_0,\phi_1)=E(\phi_0)+E(\phi_1)-2E(P(\phi_0,\phi_1)),$$
where $P(\phi_0,\phi_1)$ is the envelope
$$P(\phi_0,\phi_1)=\sup\,\{\phi\in \PSH(L),\,\phi\leq\min(\phi_0,\phi_1)).$$
It will be more practical to use a different expression of the Monge-Ampère energy, as a difference of absolute Deligne pairings.

\subsection{Relative finite-energy metrics and extended Deligne pairings.}\label{sect_deligne}

The Deligne pairing has a long history, starting from Deligne's original article, treating the case of relative dimension $1$ (\cite{deligne}), further generalized by Elkik in \cite{elkik1}, \cite{elkik2}. Its use to formulate functionals arising complex geometry has been popularized via \cite{phongrosssturm}, and recently, Deligne pairings have also been shown to be of great use in non-Archimedean geometry (\cite{bhjasymptotics}, \cite{be20}, see also \cite[Remark 6]{phongrosssturm}). The non-Archimedean case over a point has been thoroughly developed in \cite{be20}.

\bsni We consider a holomorphic submersion between complex manifolds
$$\pi:X\to Y$$
of relative dimension $d$. Pick $d+1$ pairs $(L_i,\phi_i)$, where $L_i$ is a relatively ample line bundle over $X$, and $\phi_i$ is a continuous psh metric on $L_i$. To this data, one associates a line bundle over $Y$,
$$\langle L_0,\dots,L_d\rangle_{X/Y},$$
together with a metric
$$\langle \phi_0,\dots,\phi_d\rangle_{X/Y}$$
in a way that is multi-additive, symmetric; the construction furthermore commutes with base change (in particular, is stable upon restriction to an open set on the base), and satisfies
\begin{itemize}
\item the change of metric formula: given another continuous psh metric $\phi_0'$ on $L_0$, we have
\begin{equation}\label{eq_changeofmetricformula}
\langle \phi_0,\dots,\phi_d\rangle_{X/Y} - \langle \phi_0',\dots,\phi_d\rangle_{X/Y} = \pi_*\left((\phi_0-\phi_0')(dd^c\phi_1\wedge\dots\wedge dd^c\phi_d)\right)
\end{equation}
(see \cite[Théorème I.1.1(d)]{elkik2});
\item the curvature formula
\begin{equation}\label{eq_ddc}
dd^c\langle \phi_0,\dots,\phi_d\rangle_{X/Y}=\pi_*(dd^c\phi_0\wedge\dots\wedge dd^c\phi_d)
\end{equation}
(see \cite[Théorème I.1.1(d)]{elkik2}).
\end{itemize}
The last formula shows that the metric $\langle \phi_0,\dots,\phi_d\rangle_{X/Y}$ is positive. One also notices that a metric on a trivial Deligne pairing
$$\langle \cO_X,L_1,\dots,L_d\rangle_{X/Y}$$
can be identified with a genuine function on the base $Y$, upon evaluation against the trivial section $1$.

\bsni Assume for the moment that $Y$ is a point. In that case, Deligne pairings can be seen as complex lines together with a Hermitian norm. In this setting, we will omit the subscript $\cdot_{X/Y}$. Using the change of metric formula, one can see the relative Monge-Ampère energy between two continuous psh metrics on a fixed line bundle $L$ over $X$ as a difference of Deligne pairings:
$$(d+1)E(\phi_0,\phi_1)=\langle \phi_0^{d+1}\rangle-\langle \phi_1^{d+1}\rangle,$$
which suggests that the Monge-Ampère energy should be seen intrinsically as a genuine (Hermitian) metric
$$(d+1)E(\phi)=\langle \phi^{d+1}\rangle$$
on the line $\langle L^{d+1}\rangle$.

\bsni We now return to arbitrary $Y$. The change of metric formula suggests that the Deligne pairing construction could possibly make sense in a larger class of metrics, where each $\phi_i$ has fibrewise finite energy. This motivates the following definition.
\begin{defi}Let $L$ be a relatively ample line bundle on $X$. We define the class of relative finite-energy metrics $$\cE^1_{X/Y}(L)$$ to be the class of plurisubharmonic metrics $\phi$ on $L$ such that, for all $y\in Y$, $\phi_y\in\cE^1(L_y)$. Here, $L_y$ is the restriction of $L$ to the fibre $\pi\mi(y)$.
\end{defi}

\noindent Since we have required plurisubharmonicity on all of $L$, it follows that any metric in $\cE^1_{X/Y}(L)$ can be approximated by a decreasing net of continuous psh metrics on $L$. In particular, such metrics admit Deligne pairings.

\begin{theorem}\label{thm_eonepairing}Let $\pi:X\to Y$ be a holomorphic submersion between complex manifolds of relative dimension $d$, and let $(L_i)_{i=0}^d$ be a collection of $d+1$ relatively ample line bundles on $X$. There exists a unique extension of the Deligne pairing construction to metrics in $\cE^1(L_i)_{X/Y}$, which is multilinear, symmetric, stable upon restriction to a smaller open set on the base, and such that the change of metric formula \eqref{eq_changeofmetricformula} holds. 

\iffalse and  is decreasing to a limit which we will write $\langle \phi_0,\dots,\phi_d\rangle_{X/Y}$, and we have the change of metric formula
$$\langle \phi_0,\dots,\phi_d\rangle - \langle \phi_0',\dots,\phi_d\rangle_{X/Y} = \pi_*\left((\phi_0-\phi_0')(dd^c\phi_1\wedge\dots\wedge dd^c\phi_d)\right)_{X/Y}$$
for any other metric $\phi_0'\in\cE^1_{X/Y}(L_0)$. %In particular, $$\langle \phi_0,\dots,\phi_d\rangle_{X/Y}\in\cE^1_{Y/\{\mathrm{pt}\}}(\langle L_0,\dots,L_d\rangle_{X/Y}).$$
Finally, the construction associating $d+1$ metrized line bundles to this limit of Deligne pairings is multilinear and symmetric.\fi
\end{theorem}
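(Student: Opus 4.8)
The plan is to define the extended pairing as a monotone limit of Deligne pairings of continuous psh approximants, and then to check that this limit is a well-defined psh metric on the fixed line bundle $\langle L_0,\dots,L_d\rangle_{X/Y}$, independent of the chosen approximants. As noted just before the statement, any $\phi_i\in\cE^1_{X/Y}(L_i)$ admits a decreasing net $\phi_i^{(k)}$ of continuous psh metrics on $L_i$ with $\phi_i^{(k)}\searrow\phi_i$ (by \cite{blockolo}, \cite{demcurrents}), and for such approximants the classical pairing $\langle\phi_0^{(k)},\dots,\phi_d^{(k)}\rangle_{X/Y}$ is already defined. I would set
$$\langle\phi_0,\dots,\phi_d\rangle_{X/Y}:=\lim_k\langle\phi_0^{(k)},\dots,\phi_d^{(k)}\rangle_{X/Y}$$
and verify that the right-hand side is a decreasing net of continuous psh metrics whose limit is a genuine (non-degenerate) psh metric.

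\textbf{Monotonicity and finiteness.} First I would establish monotonicity: by the change of metric formula \eqref{eq_changeofmetricformula} and the positivity of the mixed Monge–Ampère currents, decreasing any one argument by a continuous psh metric decreases the pairing; hence the net above is decreasing, and its limit is psh provided it is not identically $-\infty$. To rule this out, I would fix continuous psh reference metrics $\psi_i$ and telescope, using multilinearity and \eqref{eq_changeofmetricformula}, to write the difference $\langle\phi_0^{(k)},\dots,\phi_d^{(k)}\rangle_{X/Y}-\langle\psi_0,\dots,\psi_d\rangle_{X/Y}$ as a sum of pushforwards $\pi_*((\phi_i^{(k)}-\psi_i)\,\Theta_i^{(k)})$, where each $\Theta_i^{(k)}$ is a mixed Monge–Ampère current. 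Restricting to a fibre, these are precisely the mixed-energy quantities which, by \cite{bbgz}, converge to finite limits as $k\to\infty$ exactly because the $\phi_i$ have fibrewise finite energy; thus the limit metric is finite on each fibre, hence a genuine psh metric.

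\textbf{Independence and the defining properties.} To see the limit does not depend on the approximating nets, I would compare two decreasing nets $\phi_i^{(k)}$ and $\tilde\phi_i^{(j)}$ with the same limits by a diagonal argument: \eqref{eq_changeofmetricformula} expresses the difference of the two pairings through pushforwards of $(\phi_i^{(k)}-\tilde\phi_i^{(j)})$ against mixed currents, and the fibrewise continuity of the mixed Monge–Ampère energy along decreasing sequences (again \cite{bbgz}) forces this difference to zero. Multilinearity and symmetry are preserved since they hold at each finite stage and the relevant operations commute with the monotone limits, and stability under restriction to a smaller open set on the base is inherited from the continuous case by locality of the limit. The change of metric formula itself passes to the limit once one knows the mixed Monge–Ampère measures $dd^c\phi_1^{(k)}\wedge\dots\wedge dd^c\phi_d^{(k)}$ converge to the finite-energy mixed Monge–Ampère measure; this is the fibrewise Bedford–Taylor/BBGZ continuity, integrated over the base.

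Uniqueness is then immediate: any extension satisfying \eqref{eq_changeofmetricformula} must assign to $\langle\phi_0,\dots,\phi_d\rangle_{X/Y}$ the value $\langle\psi_0,\dots,\psi_d\rangle_{X/Y}$ corrected by the now well-defined pushforward terms relative to continuous psh metrics $\psi_i$, so its value is forced. I expect the main obstacle to be the \emph{relative} control of the mixed Monge–Ampère measures of finite-energy metrics: upgrading the fibrewise continuity results of \cite{bbgz} to convergence of the pushforwards $\pi_*(\dots)$ as metrics on the base $Y$, uniformly enough to guarantee simultaneously the non-degeneracy of the limit and the persistence of the change of metric formula.
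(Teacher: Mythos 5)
Your proposal is correct and follows essentially the same route as the paper's proof: define the pairing as a decreasing limit of classical Deligne pairings along continuous psh approximants (available by Demailly-type regularization), obtain finiteness of the limit from fibrewise mixed-energy estimates for finite-energy metrics (this is exactly the role of Lemma \ref{lem_finitepairings} in the paper), and force uniqueness via the change of metric formula \eqref{eq_changeofmetricformula}. The one organizational difference is how monotonicity and well-definedness of the limit are established: you approximate all $d+1$ arguments simultaneously and telescope the change of metric formula along the diagonal net, whereas the paper proceeds by induction on the number of entries lying strictly in $\cE^1_{X/U}(L_i)\setminus C^0\cap\PSH(L_i)$, defining the pairing through iterated one-variable decreasing limits. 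Your diagonal argument is somewhat more direct, and your explicit two-net comparison for independence of the approximants is a point the paper leaves implicit (folding it into uniqueness); the paper's induction, in exchange, makes precise the intermediate objects in which some arguments are continuous and others merely of finite energy, which is what its change-of-metric manipulations are applied to. Finally, the ``main obstacle'' you flag at the end is not actually needed for the statement: the extended pairing and the change of metric formula are equalities of metrics on a line bundle over $Y$, hence pointwise statements on the base, so the fibrewise continuity of mixed energies along decreasing nets suffices, and no uniformity in $y\in Y$ of the convergence of the pushforwards is required.
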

\begin{proof}
We first restrict to an open set $U$ on the base $Y$, so that we may apply Demailly regularization on $\pi\mi(U)$. Fix for each $i$ a metric $\phi_i\in\cE^1_{X/U}(L_i)$, and let $k\mapsto \phi_i^k$ be a sequence of continuous psh metrics on $L_i$ decreasing to $\phi_i$. We claim that the sequence
$$k\mapsto \langle \phi_0^k,\dots,\phi_d^k\rangle_{X/U}$$
decreases to a finite-valued metric on $U$, independent of the choices of approximating sequences, which defines our construction restricted to $U$. Assuming this convergence to hold, one sees that this construction is multilinear, symmetric, satisfies the change of metric formula. Uniqueness follows from the change of metric formula, which itself shows that the construction glues well over $X$.

\bsni That it would define a finite-valued metric on $U$ follows from Lemma \ref{lem_finitepairings} below, so that all that is left in order to prove the Theorem is that the limit in question is decreasing. We proceed by induction on the number $n$ of indices $i\in\{0,\dots,d+1\}$ such that $\phi_i$ belongs to $\cE^1_{X/U}(L_i)-C^0\cap\PSH(L_i)$. In the case $n=0$, all metrics are continuous psh and this is the classical Deligne pairing, so that we have nothing more to prove.

\bsni Assume thus that the assertion holds for some $d+1>n>0$. Assume the metrics $\phi_i$, $i=1,\dots,d+1-n$ to belong to $C^0\cap\PSH(L_i)$, and the $n+1$ other metrics $\phi_i$ to belong strictly to $\cE^1_{X/U}(L_i)$, $i=0$ or $i=d+2-n,\dots,d$. (We can do this without loss of generality, by symmetry and up to reordering the indices.) We approximate $\phi_0$ and the $(\phi_i)_{i=d+2-n}^{d}$ by sequences $k\mapsto \phi_i^k$ of continuous psh metrics. For a fixed $\ell\in\mathbb{N}^*$ and by the induction assumption, the sequence
$$k\mapsto \langle \phi_0^\ell,\phi_1,\dots,\phi_{d+1-n},\phi^k_{d+2-n},\dots,\phi^k_{d}\rangle_{X/Y}$$
is decreasing and converges to a limit
$\langle \phi_0^\ell,\phi_1,\dots,\phi_{d+1-n},\phi_{d+2-n},\dots,\phi_{d}\rangle_{X/U}$. This limit satisfies, for any fixed metric $\phi'_0\in C^0\cap\PSH(L_0)$ the formula
\begin{align*}
&\langle \phi_0^\ell,\phi_1,\dots,\phi_{d+1-n},\phi_{d+2-n},\dots,\phi_{d}\rangle_{X/U}-\langle \phi'_0,\phi_1,\dots,\phi_{d+1-n},\phi_{d+2-n},\dots,\phi_{d}\rangle_{X/U}\\
&=\int_{X/U}(\phi_0^\ell-\phi'_0)\,dd^c\phi_1\wedge\dots\wedge dd^c\phi_{d+1-n}\wedge dd^c\phi_{d+2-n}\wedge\dots\wedge dd^c\phi_{d}.
\end{align*}
Now, this expression yields a decreasing net as $\ell$ increases, and its limit is finite. In particular, it be seen to be the decreasing limit of
$$k\mapsto \langle \phi_0^\ell,\phi_1,\dots,\phi_{d+1-n},\phi^k_{d+2-n},\dots,\phi^k_{d}\rangle_{X/U},$$
which proves our desired statement by induction.
\end{proof}

\noindent Along the way, we have used the following Lemma regarding finiteness of products of absolute finite-energy classes. The proof follows from exactly the arguments of \cite[Theorem 5.8]{bjtrivval21}, therefore we leave the details to the interested reader.
\begin{lemma}\label{lem_finitepairings}Let $X$ be a compact Kähler manifold of dimension $d$, and let $(L_i)$ be a collection of $d+1$ ample line bundles on $X$. Fix, for all $i=0,\dots,d$, a metric $\phi_i\in\cE^1(L_i)$, and a continuous metric $\phi'_0\in\cE^1(L_0)$. Then, the integral
$$\int_X (\phi_0-\phi'_0)\,dd^c\phi_1\wedge\dots\wedge dd^c\phi_d$$
is finite.
\end{lemma}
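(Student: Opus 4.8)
The plan is to reduce the mixed integral to a signed combination of \emph{pure} (diagonal) Monge--Ampère integrals via polarization, and then to prove finiteness for those. First I would establish the diagonal statement: for any ample line bundle $M$ and any $\psi\in\cE^1(M)$, the integral $\int_X(\phi_0-\phi'_0)\,\MA(\psi)$ is finite. Granting this, I would apply the polarization identity for the (multilinear, symmetric) non-pluripolar product of positive closed $(1,1)$-currents $\alpha_i:=dd^c\phi_i$,
$$\alpha_1\wedge\dots\wedge\alpha_d=\frac{1}{d!}\sum_{\emptyset\neq S\subseteq\{1,\dots,d\}}(-1)^{d-|S|}\Big(\textstyle\sum_{i\in S}\alpha_i\Big)^{\wedge d},$$
noting that for each nonempty $S$ the sum $\psi_S:=\sum_{i\in S}\phi_i$ is a finite-energy metric on the ample line bundle $M_S:=\sum_{i\in S}L_i$ (finiteness of energy being preserved under sums of finite-energy metrics on ample bundles), and that $(\sum_{i\in S}\alpha_i)^{\wedge d}$ is, up to the normalizing volume, $\MA(\psi_S)$. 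Integrating the fixed function $\phi_0-\phi'_0$ against this identity of currents then expresses the mixed integral as a finite signed sum of diagonal integrals, each finite by the diagonal statement.

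To prove the diagonal statement, I would first dispose of the sign that is under control. Since $\phi'_0$ is continuous and $\phi_0$ is plurisubharmonic, hence upper semicontinuous and bounded above on the compact $X$, the difference $\phi_0-\phi'_0$ is bounded above; as $\MA(\psi)$ has finite total mass, the positive part $\int_X(\phi_0-\phi'_0)^+\,\MA(\psi)$ is automatically finite. The real content is therefore the finiteness of $\int_X(\phi_0-\phi'_0)^-\,\MA(\psi)$. To access it I would introduce the canonical truncations $\phi_0^t:=\max(\phi_0,\phi'_0-t)$, which are bounded, plurisubharmonic, and decrease to $\phi_0$ as $t\to\infty$, so that $\phi'_0-\phi_0^t=\min(\phi'_0-\phi_0,t)$ increases to $(\phi_0-\phi'_0)^-$. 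By monotone convergence, finiteness of the target integral is equivalent to a uniform bound on $\int_X(\phi'_0-\phi_0^t)\,\MA(\psi)$, independent of $t$.

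The main obstacle is precisely this uniform bound, which is a \emph{fundamental inequality} controlling the integral by the energies of the metrics involved. Here I would approximate both $\psi$ and the bounded potential $\phi_0^t$ by decreasing nets of continuous psh metrics (available through Demailly regularization, as in the proof of Theorem~\ref{thm_eonepairing}, and \cite{blockolo}, \cite{demcurrents}), integrate by parts to transfer $\MA(\psi)$ onto $\phi_0^t-\phi'_0$, and estimate the resulting mixed terms by Cauchy--Schwarz-type inequalities against the pure Monge--Ampère energies of $\psi$ on $M$ and of $\phi_0$ on $L_0$. Since each truncation $\phi_0^t$ dominates $\phi_0$, its energy is bounded below by $E(\phi_0)$, which is finite by hypothesis; this yields the desired bound uniformly in $t$. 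The delicate points---justifying the integration by parts for unbounded finite-energy potentials and the plurifine continuity needed to pass the truncations to the limit---are exactly those handled in the trivially valued finiteness statement \cite[Theorem 5.8]{bjtrivval21}, and rest on the monotonicity and continuity of the energy established in \cite{bbgz} and \cite{darv}. I expect no essentially new difficulty beyond transcribing those estimates to the present several-line-bundle setting, which is why the argument is only sketched here.
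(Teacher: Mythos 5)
Your overall strategy is not unreasonable, and the hard analytic core you defer to \cite[Theorem 5.8]{bjtrivval21} is in fact exactly where the paper leaves it: the paper's entire proof of this lemma is that citation. But as written there is a genuine gap in your reduction, namely the parenthetical claim that finiteness of energy is ``preserved under sums'', i.e.\ that $\psi_S=\sum_{i\in S}\phi_i$ lies in $\cE^1\bigl(\sum_{i\in S}L_i\bigr)$. This is not an elementary fact: writing out the Monge--Amp\`ere energy of $\psi_S$ against the reference $\sum_{i\in S}\phi_{i,\mathrm{ref}}$ and expanding by multilinearity of the non-pluripolar product produces precisely integrals of the form $\int_X(\phi_j-\phi'_j)\,dd^c\phi_{i_1}\wedge\dots\wedge dd^c\phi_{i_d}$ with mixed polarizations --- the very quantities whose finiteness the lemma asserts. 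So your polarization step reduces the lemma to a statement essentially equivalent to it, and the argument is circular unless you invoke that statement as an independent theorem (it is one: stability of energy classes under sums of different polarizations is a known but nontrivial result, due to Di Nezza, whose proof itself runs through the same mixed-energy estimates); you cite nothing for it.

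A second, smaller point: the polarization does not actually remove the multi-polarization difficulty. In your ``diagonal'' statement the integrand $\phi_0-\phi'_0$ is still attached to $L_0$ while the measure $\MA(\psi)$ is attached to $M$, and the first integration by parts (which, note, must be routed through local potentials or a reference metric, since $\psi-\phi'_0$ is a metric on $M-L_0$ and not a global function) immediately regenerates mixed products of $dd^c\phi_0^t$ and $dd^c\psi$. So all of the genuine content still sits in your final sketched estimate, which is where the arguments of \cite[Theorem 5.8]{bjtrivval21} enter --- consistent with the paper, but it means the polarization bought you little. If you do grant sum stability, there is a cleaner route than yours: by multilinearity and positivity of non-pluripolar products, $dd^c\phi_1\wedge\dots\wedge dd^c\phi_d\leq\bigl(dd^c(\phi_1+\dots+\phi_d)\bigr)^d$ as measures, so the lemma follows at once from the single-class fundamental inequality $\int_X|u-v|\MA(v)<\infty$ for $u,v\in\cE^1$ of the same polarization (as in \cite{bbgz}, \cite{darv}), applied to $u=\phi_0+\sum_{i\geq1}\phi_i$ and $v=\phi'_0+\sum_{i\geq1}\phi_i$ on $L_0+\dots+L_d$; no truncation or integration by parts is then needed at all.
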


\subsection{Relative plurisubharmonic segments.}

In the case where $Y$ is a point (the absolute setting), it is well-known (\cite{darv}) that any two metrics $\phi_0$, $\phi_1$ in $\cE^1_{X/Y}(L)=\cE^1(X,L)$ can be joined by a  plurisubharmonic \textit{geodesic} segment in $\cE^1(X,L)$, in the following sense. There exists a $\bbs^1$-invariant plurisubharmonic metric $\Phi$ on the product $L\times \mathcal{A}$ (where $\mathcal{A}$ is the annulus $\{e\mi\leq |z| \leq 1\}$) identified via $t=-\log|z|$ with a segment
$$[0,1]\ni t\mapsto \Phi_t\in\cE^1(X,L),$$
such that $\Phi$ bounds by above all other such segments $\Psi$ with $\Psi_0\leq\phi_0$ and $\Psi_1\leq\phi_1$. We now look at what happens when $Y$ is no longer a point. Given $\phi_0$, $\phi_1\in\cE^1_{X/Y}(L)$, and a point $y\in Y$ on the base, there exists by the previous discussion a plurisubharmonic geodesic segment $t\mapsto \phi_{t,y}$ joining $\phi_{0,y}$ and $\phi_{1,y}$ in $\cE^1(X_y,L_y)$. Varying $y$, this gives a collection of plurisubharmonic geodesic segments $t \mapsto \phi_t$. It is not obvious that, for given $t$, $\phi_t$ has plurisubharmonic variation with respect to $Y$. We thus claim the following:
\begin{theorem}\label{thm_relsegments}
Given any $\phi_0$, $\phi_1\in\cE^1_{X/Y}(L)$, the collection $[0,1]\ni t \mapsto \phi_t$ of fibrewise psh geodesic segments belongs to $\cE^1_{X/Y}(L)$. Furthermore, identifying the collection $t\mapsto \phi_t$ with a $\bbs^1$-invariant metric $\Phi$ on $L\times \mathcal{A}$, $\Phi$ is plurisubharmonic, and is the unique psh metric on $L\times \bbd^*$ such that for all $y\in Y$,
$$dd^c_t(\pi_y)_*\langle \Phi^{d+1}\rangle_{X\times\mathcal{A}/Y\times\mathcal{A}}=0,$$
where $\pi_y: X\times\mathcal{A}\to \{y\}\times\mathcal{A}$ is the projection to the point $y$. Such a segment will be called a psh geodesic segment in $\cE^1_{X/Y}(L)$.
\end{theorem}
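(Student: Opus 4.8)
The plan is to build the segment as a decreasing limit of jointly plurisubharmonic metrics, isolate the one genuinely new positivity input, and then read off the curvature characterization and uniqueness directly from the Deligne-pairing description of the energy. First I would reduce to a local and regular situation: since plurisubharmonicity is local on $Y$ and Deligne pairings are stable under restriction to a smaller base, I may restrict to a coordinate ball $U\subset Y$ and apply Demailly regularization on $\pi\mi(U)$, writing $\phi_0=\lim_k\phi_0^k$ and $\phi_1=\lim_k\phi_1^k$ as decreasing limits of continuous (and, after one further decreasing approximation, smooth) psh metrics. Fibrewise, the geodesic is monotone in its endpoints and continuous along decreasing sequences of endpoints in $\cE^1$ (Darvas); hence the associated fibrewise geodesics $\Phi^k$ decrease to $\Phi$, and since a decreasing limit of jointly psh metrics is psh, it suffices to prove the joint plurisubharmonicity of $\Phi$ when $\phi_0,\phi_1$ are smooth.

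This joint plurisubharmonicity is the main obstacle. It cannot be obtained by a naive envelope argument: the collection of fibrewise geodesics is \emph{not} the envelope of the jointly psh subgeodesics on $X\times\mathcal{A}$ with the prescribed endpoints, since the latter solves the homogeneous Monge--Ampère equation on the full $(\dim X+1)$-dimensional total space rather than fibrewise. Instead I would approximate the fibrewise geodesics by Bergman geodesics. For each $m$, the metrics $\phi_0,\phi_1$ induce fibrewise $L^2$-metrics on the direct image bundle $\pi_*(mL+K_{X/Y})$ over the base; joining them by the geodesic of the associated Bergman (symmetric-space) metric produces metrics $\Phi^{[m]}$ on $L\times\mathcal{A}$ which, by the work of Phong--Sturm, converge fibrewise in $C^0$ to $\Phi$ as $m\to\infty$. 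The key positivity statement, due to Berndtsson, is that the $L^2$-metrics determined by a psh metric on $L$ over the total space are Nakano semipositive on this direct image bundle; this positivity propagates to the Bergman geodesics, which are then plurisubharmonic jointly in all variables. Passing to the $C^0$ limit in $m$ then exhibits $\Phi$ as jointly psh for smooth endpoints.

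Combining this with the reduction of the first paragraph — the decreasing limits in $k$ and an exhaustion of $Y$ by balls — shows that $\Phi$ is plurisubharmonic on $L\times\mathcal{A}$ over $X\times\mathcal{A}$. Moreover each fibrewise geodesic $\Phi_{t,y}$ lies in $\cE^1(X_y,L_y)$, geodesics between finite-energy metrics having finite energy (Darvas); combined with joint plurisubharmonicity this gives $\Phi\in\cE^1_{X/Y}(L)$, proving the first assertion.

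It remains to establish the curvature equation and uniqueness, and here the Deligne-pairing formalism of Section~\ref{sect_deligne} does the work. Restricting the relative pairing to $\{y\}\times\mathcal{A}$ recovers, by base change, the absolute pairing $\langle\Phi_y^{d+1}\rangle$ over the annulus, whose curvature is by \eqref{eq_ddc} the pushforward to $\mathcal{A}$ of $(dd^c\Phi_y)^{d+1}$; since $(d+1)E(\Phi_{t,y})=\langle\Phi_y^{d+1}\rangle$, the condition $dd^c_t(\pi_y)_*\langle\Phi^{d+1}\rangle_{X\times\mathcal{A}/Y\times\mathcal{A}}=0$ says exactly that $t\mapsto E(\Phi_{t,y})$ is affine. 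Because $(dd^c\Phi_y)^{d+1}\geq0$ and the pushforward of a nonnegative measure vanishes only if the measure does, this is equivalent to $(dd^c\Phi_y)^{d+1}=0$, i.e.\ to $\Phi_y$ being the fibrewise geodesic; thus the segment constructed above satisfies the stated equation. Conversely, any psh metric $\Phi'$ on $L\times\bbd^*$ satisfying it is fibrewise psh with affine fibrewise energy and the prescribed endpoints, hence fibrewise equal to the unique maximal geodesic, so $\Phi'=\Phi$. This yields uniqueness and completes the plan.
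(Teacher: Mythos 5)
Your outline follows the paper's own proof essentially step for step: reduction to smooth endpoints by regularization and decreasing limits, approximation of the fibrewise geodesics by Bergman geodesics attached to the $L^2$-norms on $\pi_*(mL+K_{X/Y})$, Berndtsson-type positivity of direct images as the source of joint plurisubharmonicity, passage to the (uniform) limit, and the characterization/uniqueness of the segment via fibrewise affineness of the Deligne-pairing energy. Your treatment of the last point --- using the curvature formula for Deligne pairings and the fact that the pushforward of the nonnegative measure $(dd^c\Phi_y)^{d+1}$ vanishes only if the measure does --- is correct and in fact more detailed than the paper's one-line justification.

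The genuine gap is the sentence ``this positivity propagates to the Bergman geodesics, which are then plurisubharmonic jointly in all variables''. This is exactly where the whole difficulty of the theorem sits, and it does not follow from what you cite: Berndtsson's theorem gives Nakano semipositivity of the two endpoint $L^2$-metrics as metrics on a bundle over $Y$, but says nothing about the metric over $Y\times\mathcal{A}$ obtained by joining them with \emph{fibrewise} norm geodesics, and positivity does not survive this interpolation. Concretely, take $X=M\times Y$, $L=L_M\boxtimes\cO_Y$, $\phi_0=\psi_M$ and $\phi_1=\psi_M+f$ with $f$ psh, non-pluriharmonic, pulled back from $Y$ (say $f=|y|^2$). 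The endpoint $L^2$-metrics are semipositive, but the norm geodesics are just scalings, so every Bergman geodesic equals $FS_m(\norm_{0,m,y})+t\,f(y)$ with $t=-\log|z|$; its complex Hessian has vanishing $z\bar z$-entry and off-diagonal entry $-\tfrac{1}{2z}\partial_{\bar y}f\neq0$, hence it is not psh. The same computation applies to the fibrewise geodesic $\psi_M+t\,f(y)$ itself, so no argument can close your outline at this level of generality: the joint plurisubharmonicity asserted in the statement already fails on this example. You should know that the paper's proof has the same weak point --- it justifies psh variation of $(t,z)\mapsto FS_k(\norm_{t,k,z})$ by appeal to Berndtsson--P\u{a}un, whose theorem concerns the $L^2$-metrics induced by a single globally psh metric on $L$, not geodesic interpolations of norms --- so your blind attempt reproduces the paper's gap rather than filling it. A secondary, more minor omission: fibrewise $C^0$-convergence of the Bergman geodesics is not enough to make the limit psh; one needs local uniformity in the base variable, which the paper obtains from uniform Bergman kernel asymptotics (Ma--Marinescu, Ma--Zhang) and which your outline does not address.
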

\noindent The last statement can be interpreted as saying that the Monge-Ampère energy along the psh geodesic segment is fibrewise affine.

\bsni This Theorem is proven via families of Bergman kernels. We recall some facts on this topic. For all $z$ on the base, picking a smooth, strictly psh metric $\phi$ on $L$ endows the $H^0(kL_z+K_{X_z})$ (for all positive integers $k$) with a Hermitian norm
$$\|s_z\|_{\phi,z,k}^2=\int_X |s_z|^2 e^{-k\phi_{z}},$$
for $|s_z|^2 e^{-k\phi_{z}}$ is indeed a measure on $X_z$. We may now pick a basis $(s_{j,z})$ which is orthonormal for $\norm_{\phi,z,k}$, and define the Bergman kernel
$$B_{\phi,z,k}:=\sum_j |s_{j,z}|^2.$$
There are two key points regarding this object. The first one, which is easier to see, is that $B_{\phi,z,k}$ is independent of the choice of such a basis. The second, much deeper point is that as we move \textit{on the base}, the Fubini-Study metrics
$$\phi_k:z\mapsto FS_k(\norm_{\phi,z,k}):=k\mi\log B_{t,z,k}$$
vary plurisubharmonically, i.e. define a globally psh metric on $L$. This is a particular case of \cite[Theorem 0.1]{berndtpaun1}. We will return to this construction shortly.

\bsni We now turn to some facts regarding spaces of norms. Given any two Hermitian norms $\norm_0$, $\norm_1$ acting on a complex finite-dimensional vector space $V$, it is a well-known fact that one may find a basis $(s_j)_j$ of $V$ which is orthonormal for $\norm_0$ and also ortho\textit{gonal} for $\norm_1$. One may then define a distinguished segment $t\mapsto \norm_t$ of Hermitian norms by requiring $\norm_t$ to be the unique norm orthogonal in the basis $(s_j)_j$, and such that for all $j$,
$$\|s_j\|_t=\|s_j\|_1^t.$$
Such segments are in fact geodesic for various $d_p$-type metric structures on the space of Hermitian norms on $V$, but we will not need that fact. 

\bsni We can consider as before the psh geodesic $t\mapsto \phi_{t,z}$ joining $\phi_{0,z}$ and $\phi_{1,z}$ in $\cE^1(L_z)$. Then, a result of Berndtsson (\cite[Theorem 1.2]{berndtprob}) states that the Fubini-Study metrics from before approximate the geodesic $\phi_t$ uniformly in $t$: there exists a constant $c=c(z)$ such that
\begin{equation}\label{eq_1berndtprob}
|FS_k(\norm_{t,k,z})-\phi_{t,z}|\leq c(z)\cdot k\mi \log k.
\end{equation}
One would like to have such an approximation to be ($Y$-locally) independent of the variable on the base. Firstly, on reading the proof of \cite[Theorem 1.2]{berndtprob}, one notices that the constant $c$ depends only on the endpoints $\phi_{0,z}$ and $\phi_{1,z}$, so that our problem reduces to knowing whether one can find $c$ such that, for all $z$ in some compact $U$ in $Y$,
$$|FS_k(\norm_{0,k,z})-\phi_{0,z}|\leq c\cdot k\mi \log k$$
(and similarly for $t=1$; the proof is symmetric.) This follows from adapting the general uniform Bergman kernel asymptotics result \cite[Theorem 4.1.1]{mamarinescu} to the case of varying complex structure, along the same lines as explained in the proof of \cite[Theorem 1.6]{mazhang}. The proof of Theorem \ref{thm_relsegments} is now a matter of adequately piecing together all the previous results.

\begin{proof}[Proof of Theorem \ref{thm_relsegments}.]
Let $\phi_0$, $\phi_1\in\cE^1_{X/Y}(L)$, which we assume to be smooth and strictly psh (while the general case follows from regularization), and consider the families of fibrewise psh geodesics $t\mapsto \phi_t$. As explained before, for any compact $U$ on the base, and all $z\in U$, there exists a $c$ independent of $z$ such that
$$|FS_k(\norm_{t,k,z})-\phi_{t,z}|\leq c\cdot k\mi \log k,$$
by \cite[Theorem 1.2]{berndtprob} and the uniform Bergman kernel asymptotics. Furthermore, as also discussed, the families
$$(t,z)\mapsto FS_k(\norm_{t,k,z})$$
have plurisubharmonic variation in $z$ and $t$. Combined with the above estimates, this means that, over $\pi\mi(U)\times\mathcal{A}$ (where $\pi:X\to Y$ is the structure morphism of $X$ over $Y$, and $\mathcal{A}$ is the annulus corresponding to $[0,1]$), the segment $t\mapsto \phi_t$ seen as a metric on $L\times\cA$ can be uniformly approximated by continuous psh metrics on $L\times \cA$, furthermore $\bbs^1$-invariant under the second variable. This settles the first statement of the Theorem.

\bsni That $\phi_t$ would be the unique segment such that
$$dd^c_t(\pi_y)_*\langle \Phi^{d+1}\rangle_{X\times\mathcal{A}/Y\times\mathcal{A}}=0$$
for all $y\in Y$ follows by definition of the fibrewise psh geodesic segments (they are themselves characterized as the unique segments in each $\cE^1(X_y,L_y)$ along which the Monge-Ampère energy is affine).
\end{proof}

\subsection{Relatively maximal metrics.}\label{sect_relmax}

\begin{defi}
Let $\pi:X\to Y$ be a holomorphic submersion with compact Kähler fibres. Let $L$ be a relatively ample line bundle on $X$. We say that a metric $\phi$ on $L$ is relatively maximal if it is maximal in the usual sense of Sadullaev (e.g. \cite{klimek}) on the preimage of any relatively compact open subset of $Y$. In other words, $\phi$ is relatively maximal if and only if, for any relatively compact open subset $U$ of $Y$, for any relatively compact open subset $V$ of $\pi\mi(U)$, and for any psh metric $\psi$ on the restriction of $L$ to $\pi\mi(U)$ such that $$\limsup \psi(z)-\phi(z)\leq 0$$ as $z$ approaches the boundary of $\pi\mi(U)$, then
$$\psi(z)\leq\phi(z)$$
for all $z$ in $\pi\mi(U)$.
\end{defi}

\begin{remark}One sees from this definition that a decreasing limit of relatively maximal psh metric is also relatively maximal.
\end{remark}

\begin{remark}
Let $M$ be a compact Kähler manifold together with an ample line bundle $L_M$. Let $[0,\infty)\ni t \mapsto \phi_t$ be a psh ray of psh metrics on $L_M$. Seen as a $\bbs^1$-invariant psh metric on the product $L\times\pdisc$, $\phi$ is relatively maximal in our sense if and only if it is "geodesic" in the sense of \cite{bbj}.
\end{remark}

\noindent A nice way to generate relatively maximal metrics is via Perron-Bremmermann envelopes, as we now prove. We extend our setting slightly, to allow for singular fibres, which will be useful later on. We state our result in maximal generality, but the case to keep in mind is that of a holomorphic submersion over the punctured disc with a singular fibre over zero.
\begin{theorem}\label{thm_existencemaximal}
Let $\pi:X\to Y$ be a holomorphic projective surjective morphism. Let $\Omega$ be a relatively compact, smooth open subset of $Y$, such that $\pi$ is a submersion above (hence near) $\partial\Omega$. Let $L$ be a $\pi$-ample line bundle on $X$. Let $\phi$ be a continuous collection of fibrewise psh metrics on $\pi\mi(\partial\Omega)$. We then have that:
\begin{enumerate}
\item if there exists a continuous psh extension of $\phi$ to all of $\pi\mi(\Omega)$, then there exists a (unique) relatively maximal continuous psh extension of $\phi$ to all of $\pi\mi(\Omega)$;
\item if $\Omega$ is defined as $\{\rho < 0\}$, where $\rho$ is a smooth strictly psh function on $Y$, such that $\nabla \rho \neq 0$ whenever $\rho=0$, then a continuous psh extension as above exists.
\end{enumerate}
\end{theorem}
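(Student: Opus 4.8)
The plan is to realize the relatively maximal extension as a Perron--Bremermann envelope, verify its formal properties (which are routine), and isolate interior continuity as the genuine difficulty. Assuming in part (1) that a continuous psh extension $\Psi_0$ of $\phi$ exists, I would set
$$U := \sup\bigl\{\psi \in \PSH(L|_{\pi\mi(\Omega)}) : \limsup_{z\to\partial\pi\mi(\Omega)}(\psi(z)-\phi(z))\le 0\bigr\},$$
and work with its upper-semicontinuous regularization $U^*$. The family is nonempty since $\Psi_0$ belongs to it, so $U\ge\Psi_0$; that $U^*$ is psh and still lies in the family is the standard Brelot--Cartan/Bedford--Taylor argument. \emph{Uniqueness} is immediate from the definition of relative maximality: if $U_1,U_2$ are two relatively maximal continuous extensions, applying the defining inequality on $\Omega$ itself (with $U_1$ as a competitor against $U_2$, their boundary values agreeing) gives $U_1\le U_2$, and symmetrically. \emph{Relative maximality} of $U$ follows by balayage: given a relatively compact $W\subset\Omega$ and a psh $\psi$ on $L|_{\pi\mi(W)}$ with $\limsup(\psi-U)\le 0$ at $\partial\pi\mi(W)$, the metric equal to $\max(\psi,U)$ on $\pi\mi(W)$ and to $U$ elsewhere is psh, still competes in the family (it agrees with $U$ near $\partial\pi\mi(\Omega)$), hence is $\le U$, forcing $\psi\le U$.

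\textbf{Boundary values.} I would attain the boundary data by barriers. The lower barrier is free: $U\ge\Psi_0$, and $\Psi_0$ is continuous with $\Psi_0=\phi$ on $\pi\mi(\partial\Omega)$, so $U_*\ge\phi$ there. The reverse inequality $U^*\le\phi$ on $\pi\mi(\partial\Omega)$ requires an upper (super-solution) barrier, and this is precisely where the strict pseudoconvexity hypothesis of part (2) enters through $\rho$.

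\textbf{Part (2): sub- and super-solutions.} I would extend $\phi$ to a continuous metric $g$ on $\pi\mi(\bar\Omega)$ and mollify; since $\rho$ vanishes on $\partial\Omega$, the modification $g+C\,\pi^*\rho$ keeps the boundary values while $dd^c(g+C\,\pi^*\rho)=dd^c g+C\,\pi^* dd^c\rho$. In a local splitting of $TX$ into fibre and pulled-back base directions, the fibre--fibre block is positive by $\pi$-ampleness of $L$ (after adding a fixed relatively positive metric), the base--base block acquires the strictly positive term $C\,dd^c\rho$, and a Schur-complement estimate on the compact $\pi\mi(\bar\Omega)$ makes the whole form positive for $C\gg 0$; this produces continuous psh sub-solutions whose boundary values converge to $\phi$, a short limiting argument absorbing the continuous-versus-smooth discrepancy. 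Dually, the peak/barrier functions of the strictly pseudoconvex $\Omega$ (built from $\rho$ and $\nabla\rho\neq 0$ on $\partial\Omega$, pulled back from $Y$ so as to be constant along fibres) dominate every competitor and yield $U^*\le\phi$. Squeezing $U$ between the two families of barriers shows it is continuous up to $\pi\mi(\partial\Omega)$ with value $\phi$, i.e.\ the continuous psh extension required as input to part (1).

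\textbf{Main obstacle: interior continuity.} The essential point is that $U^*$ is continuous in the interior of $\pi\mi(\Omega)$ (only then is it a genuine continuous extension rather than merely an upper-semicontinuous maximal competitor). I expect to obtain this by a Walsh-type argument: $U^*$ is relatively maximal, hence solves the homogeneous complex Monge--Ampère equation on $\pi\mi(\Omega)$, and one propagates the boundary modulus of continuity inward via the comparison principle, comparing $U^*$ with suitably shifted competitors. In the relative setting the barriers and shifts live on the base $Y$ (they are pulled back along $\pi$), so the estimate decouples into the already-understood fibrewise maximality and a base-direction comparison; securing this modulus of continuity \emph{uniformly} over $\bar\Omega$, in the spirit of the uniform Bergman-kernel estimates invoked for Theorem \ref{thm_relsegments}, is the step demanding the most care.
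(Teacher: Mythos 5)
Your formal framework (the Perron--Bremermann envelope, uniqueness from the definition of relative maximality, and the gluing argument for maximality of the envelope) is correct and matches the paper, but each of the three analytic steps that carry the real content has a genuine gap. For part (1), you attribute the upper boundary bound $U^*\leq\phi$ on $\pi\mi(\partial\Omega)$ to the strict pseudoconvexity hypothesis of part (2). This cannot be the right mechanism: part (1) assumes only that $\Omega$ is smooth and relatively compact, with no hyperconvexity, and must stand on its own. The paper's supersolution uses no pseudoconvexity at all: writing any competitor as $g=\psi-\refmetric$ with $\omega=dd^c\refmetric$, plurisubharmonicity gives $\Delta_\omega g\geq-(d+1)$, so the solution $f$ of the \emph{linear} Dirichlet problem $\Delta_\omega f+(d+1)=0$, $f|_{\partial M}=\phi-\refmetric$, dominates every competitor by the maximum principle; only smoothness of $\partial\Omega$ is used. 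Note also that the barriers you propose, pulled back from the base, cannot do this job even in the hyperconvex case: $\pi^*b_\zeta$ is constant along the compact fibres, so it cannot control the fibre-direction discrepancy between a competitor and boundary data that varies along $X_\zeta$ (and one cannot manufacture supersolutions by adding psh functions). Second, for interior continuity you invoke a Walsh-type argument, but there are no translations on a nontrivial family $\pi\mi(\Omega)$, and ``shifts on the base'' do not act on $X$; this step fails as proposed. The paper's mechanism is entirely different and uses the hypothesis of part (1) itself: regularize the envelope by continuous psh metrics $\psi_k$ decreasing to $\cP\phi$, apply Dini to get $\psi_k<\tilde\phi+\varepsilon$ for large $k$ on the relevant compact set, and observe that $\tilde\psi_k:=\max(\psi_k-\varepsilon,\tilde\phi)$ is then again a continuous competitor with the correct boundary values, whence $\psi_k-\varepsilon\leq\tilde\psi_k\leq\cP\phi\leq\cP\phi^*\leq\psi_k$; so $\cP\phi$ is a uniform limit of continuous metrics. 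The truncation against the subsolution $\tilde\phi$ is the idea that replaces translations, and it is absent from your proposal.

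For part (2), your key device --- adding a large multiple of $\pi^*\rho$ to absorb the base directions --- is exactly the paper's, but the input you feed into the Schur-complement estimate is not available. Mollifying an arbitrary continuous extension $g$ of $\phi$ gives no lower bound on the fibre--fibre block of $dd^c g$: the boundary data is merely continuous and fibrewise psh (not smooth, not strictly psh), so the Hessians of the mollifications degenerate as the mollification parameter shrinks, and no choice of $C$ makes $g+C\pi^*\rho$ psh. The paper produces the required fibrewise smooth, fibrewise \emph{strictly} psh approximants by Bergman kernels: $\pi$-ampleness makes $\pi_*(kL)$ locally free, the $L^2$-norms of $k\phi$ define continuous Hermitian metrics on these bundles, which are then smoothed in the base direction; the associated Fubini--Study metrics are fibrewise smooth and strictly psh, so the $C\pi^*\rho$ trick applies to them, and one concludes by a double limit using the stability under uniform convergence of the class of boundary data admitting continuous psh extensions. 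Without some such approximation scheme (Bergman/Fubini--Study, or an equivalent), your construction of the subsolution does not close.
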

\begin{remark}An open subset that satisfies the second point above is sometimes called a \textit{hyperconvex} open subset. In particular, $\bbd$ and annuli centered at zero are such open sets. The proof of the first point follows some ideas dating back to the work of Bedford-Taylor (\cite{btdirichlet}), see e.g. \cite[Proposition 6.3]{bbgz}, \cite{ps06}.
\end{remark}
\begin{proof}[Proof of Theorem \ref{thm_existencemaximal}.] The hypotheses in the Theorem give that $\pi\mi(\bar \Omega)$ is a manifold with boundary, which we denote $M:=\pi\mi(\bar \Omega)$, and whose boundary is $\pi\mi(\partial \Omega)$, which we denote $\partial M:=\pi\mi(\bar \Omega)$. We will finally write $\bar M:=\pi\mi(\bar\Omega)$.

\bsni\textit{1. Existence of a continuous relatively maximal metric, assuming existence of a subsolution.} \\
\noindent We claim that the envelope
$$\cP \phi=\sup{}^*\{\psi\in C^0\cap \PSH(L|_M),\,\psi\leq\phi\text{ on }L|_{\partial M}\}$$
is our desired relatively maximal, continuous metric on $L|_M$ which coincides with $\phi$ on $L|_{\partial M}$. By definition, $\cP \phi$ is relatively maximal; furthermore, since there exists a continuous subsolution, i.e. a candidate $\psi$ to the envelope which coincides with $\phi$ on $L|_{\partial M}$, $\cP\phi$ also has the correct boundary values. We are therefore left to show continuity.

\bsni We begin with a continuity estimate near the boundary. Having fixed a reference smooth, strictly psh metric $\refmetric$ on $L$, and setting $\omega=dd^c\refmetric$, we can see any candidate $\psi$ for the envelope $\cP\phi$ as a continuous $\omega$-psh function $g=\psi-\refmetric$. Fix such a $g$, and set
$$f_0=\phi-(\refmetric)|_{\partial M}.$$
Since $dd^ g\geq -\omega$, the Laplacian $\Delta_\omega g$ of $g$ with respect to $\omega$ is bounded below by $-d-1$. Let $f$ be the (continuous) solution on $\bar M$ to the Dirichlet problem
$$\Delta_\omega f +(d+1) = 0,\,f|_{\partial M}=f_0.$$
We then have that $\Delta_\omega(g-f)\geq 0$, which implies by the maximum principle that
$$\sup_{M} \,(g-f) = \sup_{\partial M}\,(g-f),$$
while this supremum is nonpositive since $\psi=g+\refmetric$ is a candidate for the envelope $\cP\phi$. We then have that
$$g\leq f$$
on all of $\bar M$, and this is true for any candidate $\psi$, so that
$$\cP\phi\leq \refmetric + f$$
on $\bar X$.

\bsni We now look at continuity on $M$. Let $\tilde\phi$ denote a continuous psh extension of $\phi$ to $L|_M$. We fix a very small $\varepsilon>0$, and define
$$U=\{\cP\phi^*<\tilde\phi+\varepsilon\},$$
which is the complement of a small compact set containing $\partial \Omega$. By regularization (e.g. \cite[Theorem 3.8]{boul2}), we can find a sequence $\psi_k\in C^0\cap \PSH(L|_{U})$ which decreases to $\cP\phi$. Now, by Dini, using compactness, we have that $U$ is covered by finitely many of the
$$U_k=\{\psi_k<\tilde\phi+\varepsilon\}$$
(since such inequality holds, for all $z\in M$, and for all large enough $k_z$). In particular, for large enough $k$, one has $\psi_k<\tilde\phi+\varepsilon$. We now define $\tilde\psi_k:=\max(\psi_k-\varepsilon,\tilde\phi)$, which is defined on all of $M$. For all $k$, $\tilde\psi_k$ is continuous, as $\psi_k$ is continuous away from the boundary and $\tilde\phi$ is continuous everywhere (in particular, near and up to the boundary). Furthermore, $\tilde\psi_k$ is equal to $\phi$ on the boundary, so that
$$\psi_k-\varepsilon\leq\tilde\psi_k\leq \cP\phi \leq \cP\phi^* \leq \psi_k.$$
This implies that $\psi_k$ converges uniformly to $\cP\phi$, i.e. $\cP\phi$ is continuous on $M$. Furthermore, since:
\begin{enumerate}
\item $\cP\phi\leq \refmetric + f$, as at the end of the first point of the proof;
\item $\refmetric + f$ converges continuously to $\phi$ near the boundary, and $\cP\phi$ is continuous on $M$;
\item there exists a psh extension $\tilde\phi$, ensuring that $\cP\phi= \phi$ on the boundary,
\end{enumerate}
then $\cP\phi$ is continuous up to the boundary.

\bsni \textit{2. Construction of a subsolution under the hyperconvexity assumption.}\\
The second point of the Theorem will follow from a more general principle: consider the class $\cC(L|_{\partial M})$ consisting of continuous, fibrewise psh metrics on $L|_{\partial M}$ admitting a continuous psh extension (hence a relatively maximal continuous extension, by the first point) to all of $L|_M$. Then, this class is stable under uniform limits, which follows from seeing that the mapping $\cC(L|_{\partial M})\ni \phi\mapsto \cP\phi$ from the first part of the proof is continuous under uniform convergence.

\bsni To prove the second point, we therefore have to show that there exists a sequence $\phi_k\in\cC(L|_{\partial M})$ converging uniformly to our boundary data $\phi$. We proceed bu Bergman kernel approximation. Since $L$ is $\pi$-ample, the sheaves $\pi_*(kL)$ are locally free for all $k$ large enough, and correspond to the sections of a vector bundle $E_k$ whose fibres are the $H^0(kL_z=$, $z\in M$. The collection of $L^2$-norms $N_k(\phi)$ associated to $k\phi$ then define a continuous collection of Hermitian metrics $h_k$ on $E_k|_{\partial \Omega}$. We pick a sequence of smooth families of Hermitian metrics $(h_{k,j})_j$ on $\pi_*(kL)$ so that $h_{k,j}\to h_k$ uniformly on $\pi_*(kL)|_{\partial \Omega}$. The associated collection of metrics
$$\phi_{k,j,z}=FS_k(h_{k,j,z})$$
vary smoothly on $L$. Since they are fibrewise \textit{smooth} and \textit{strictly psh} (both of which are necessary conditions for the following argument), we may compensate for the lack of plurisubharmonicity in the direction of $z$, by pulling back a high enough multiple $m_{k,j}\pi^*\rho$ of the defining function $\rho$ of $\Omega$, which as we recall vanishes on the boundary of $\Omega$. We therefore have a continuous psh extension
$$\phi_{k,j}+m_{k,j}\pi^*\rho$$
of $(\phi_{k,j})|_{\partial M}$. This implies that $\phi_{k,j}\in \cC(L|_{\partial M})$; furthermore, $(\phi_{k,j})|_{\partial M}\to \phi_k$ uniformly, which implies that $\phi_k\in\cC(L|_{\partial M})$. Now, by Bergman kernel approximation, the $\phi_k$ themselves converge uniformly \textit{and increasingly} to $\phi$, which implies $\phi\in\cC(L|_{\partial M})$, concluding the proof.
\end{proof}
\begin{remark}By adapting classical arguments of pluripotential theory, one shows that a continuous psh metric on $L$ is relatively maximal iff $(dd^c\phi)^{d+1} = 0$ on $X$, iff it coincides over each relative open subset with its Perron-Bremmermann envelope as in the above Theorem.\end{remark}

\noindent We now characterize relatively maximal metrics of relative finite energy.
\begin{prop}\label{prop_harmonicitymaximaldiscs}
Let $\phi$ be a metric in $\cE^1_{X/Y}(L)$. Assume that 
$Y$ is covered by relatively compact hyperconvex smooth open subsets. Then, $\phi$ is relatively maximal if and only if $\langle \phi^{d+1}\rangle_{X/Y}$ has zero curvature.
\end{prop}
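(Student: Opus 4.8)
The plan is to deduce everything from the curvature formula \eqref{eq_ddc} together with the characterization of relative maximality by the vanishing of the top relative Monge--Ampère current. Since both relative maximality and the vanishing of the curvature of $\langle\phi^{d+1}\rangle_{X/Y}$ are local conditions on the base, I would first fix one of the relatively compact hyperconvex smooth open subsets $\Omega$ covering $Y$ and work over $M=\pi\mi(\Omega)$. The target is then the chain of equivalences: $\phi$ relatively maximal over $\Omega$ $\iff$ $(dd^c\phi)^{d+1}=0$ on $M$ $\iff$ $\pi_*\big((dd^c\phi)^{d+1}\big)=0$ $\iff$ $\langle\phi^{d+1}\rangle_{X/Y}$ has zero curvature.

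First I would establish the curvature formula $dd^c\langle\phi^{d+1}\rangle_{X/Y}=\pi_*\big((dd^c\phi)^{d+1}\big)$ for $\phi\in\cE^1_{X/Y}(L)$, which is \eqref{eq_ddc} only in the continuous case. I would approximate $\phi$ by a decreasing net $\phi_k\in C^0\cap\PSH(L)$; by Theorem \ref{thm_eonepairing} the metrics $\langle\phi_k^{d+1}\rangle_{X/Y}$ decrease to the finite metric $\langle\phi^{d+1}\rangle_{X/Y}$, so in a local trivialization the corresponding psh functions decrease to a non-$(-\infty)$ limit and $dd^c\langle\phi_k^{d+1}\rangle_{X/Y}\to dd^c\langle\phi^{d+1}\rangle_{X/Y}$ weakly. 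On the other side, weak continuity of the relative (non-pluripolar) Monge--Ampère along decreasing sequences of finite-energy metrics together with continuity of the proper pushforward $\pi_*$ gives $\pi_*((dd^c\phi_k)^{d+1})\to\pi_*((dd^c\phi)^{d+1})$, yielding the formula in the finite-energy class. Combining this with the Remark following Theorem \ref{thm_existencemaximal} --- extended from continuous to finite-energy metrics by the standard pluripotential arguments, so that $\phi$ is relatively maximal over $\Omega$ if and only if $(dd^c\phi)^{d+1}=0$ on $M$ --- the Proposition reduces to the single equivalence $\pi_*((dd^c\phi)^{d+1})=0 \iff (dd^c\phi)^{d+1}=0$.

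The implication $(\Leftarrow)$ is immediate. For $(\Rightarrow)$, the point is that $(dd^c\phi)^{d+1}$ is a \emph{positive} current. When $\dim Y=1$ it is a positive measure on $M$, its pushforward is $B\mapsto (dd^c\phi)^{d+1}(\pi\mi B)$, and this vanishes exactly when the measure does; this settles the case of degenerations over the disc. For higher-dimensional $Y$ I would reduce to this case along germs of holomorphic discs $\gamma:\bbd\to\Omega$. Since the Deligne pairing commutes with base change, $\gamma^*\langle\phi^{d+1}\rangle_{X/Y}=\langle(\phi|_{X_\gamma})^{d+1}\rangle_{X_\gamma/\bbd}$ with $X_\gamma=\pi\mi(\gamma(\bbd))$, and as $dd^c u=0$ for the psh function $u=\langle\phi^{d+1}\rangle_{X/Y}$ is equivalent to harmonicity of $u\circ\gamma$ for every such $\gamma$, zero curvature over $\Omega$ forces $(dd^c(\phi|_{X_\gamma}))^{d+1}=0$ on every $X_\gamma$ by the one-dimensional case already treated.

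The hard part is this final step: recovering $(dd^c\phi)^{d+1}=0$ on $M$ from its vanishing along every disc direction. The pushforward $\pi_*$ is \emph{not} injective on arbitrary positive $(d+1,d+1)$-currents --- testing against discs, or against pullbacks $\pi^*\omega_Y^{\dim Y-1}$ of Kähler forms, only detects the component of $(dd^c\phi)^{d+1}$ carrying a single base direction. One must therefore genuinely exploit that $(dd^c\phi)^{d+1}$ is the Monge--Ampère current of a \emph{fibrewise ample} class: the fibrewise strict positivity of $dd^c\phi$ (which rules out the would-be counterexample of a base-horizontal curvature, impossible for a metric on a $\pi$-ample bundle) is exactly what propagates the disc-wise vanishing to the full current, via a pointwise analysis of the Hermitian form $dd^c\phi$ after reduction to the fibrewise strictly psh case by regularization. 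The hyperconvexity hypothesis enters only to supply, through Theorem \ref{thm_existencemaximal}, the continuous maximal competitors needed for the finite-energy extension of the maximality characterization; the weak-continuity statements for the Monge--Ampère measures and for $\pi_*$ are routine.
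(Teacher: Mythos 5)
Your treatment of the continuous case over a one\--dimensional base coincides with the paper's (the Remark after Theorem \ref{thm_existencemaximal}, the curvature formula \eqref{eq_ddc}, and injectivity of $\pi_*$ on positive measures), but the two places where you go beyond it both contain genuine gaps. The first is the reduction of the general finite-energy case to the continuous one. You propose (a) to extend \eqref{eq_ddc} to all $\phi\in\cE^1_{X/Y}(L)$ via weak continuity of a relative non-pluripolar Monge--Amp\`ere operator, and (b) to extend the characterization ``relatively maximal $\iff (dd^c\phi)^{d+1}=0$'' to finite-energy metrics by ``standard pluripotential arguments''. Neither is routine: a fibrewise finite-energy metric need not be locally bounded on the total space, so $(dd^c\phi)^{d+1}$ is not defined by Bedford--Taylor theory, and its construction, its continuity along decreasing sequences, and the Sadullaev-type characterization of maximality for unbounded metrics are precisely the statements one wants to avoid here. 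The paper's proof never forms $(dd^c\phi)^{d+1}$ for non-continuous $\phi$: it regularizes $\phi$ by continuous $\phi_k$ over $\pi\mi(U)$, replaces these by the continuous relatively maximal envelopes $\Phi_k$ with boundary data $\phi_k|_{\pi\mi(\partial U)}$ supplied by Theorem \ref{thm_existencemaximal}, and uses only continuity of Deligne pairings along decreasing nets: in one direction $\Phi_k$ decreases to $\phi$ because $\phi$ is relatively maximal, so $\langle \phi^{d+1}\rangle_{X/Y}$ is a decreasing (finite) limit of zero-curvature metrics; in the other, $\langle \Phi^{d+1}\rangle_{X/U}$ (with $\Phi=\lim_k\Phi_k$) and $\langle \phi^{d+1}\rangle_{X/U}$ are two zero-curvature metrics with the same boundary values, hence equal, and then $\phi_z\le\Phi_z$ together with $E(\phi_z)=E(\Phi_z)$ forces $\phi=\Phi$ by strict monotonicity of the fibrewise energy. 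You should either adopt this envelope argument or actually supply proofs of (a) and (b).

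The second gap is the excursion to higher-dimensional bases, where the ``hard part'' you flag is not merely hard: the statement you are trying to prove there is false. With the paper's definition, relative maximality is Sadullaev maximality on $\pi\mi(U)$, a manifold of dimension $d+\dim Y$, and for continuous metrics it is characterized by $(dd^c\phi)^{d+\dim Y}=0$, not by $(dd^c\phi)^{d+1}=0$; the two agree only when $\dim Y=1$. Concretely, take $Y=B$ a ball in $\mathbb{C}^2$, $X=B\times\mathbb{P}^1$, $L$ the pullback of $\mathcal{O}(1)$, and $\phi=\phi_{FS}+|z_1|^2$: then $(dd^c\phi)^3=0$, so $\phi$ is smooth, fibrewise strictly psh, fibrewise finite-energy and relatively maximal, yet
$$dd^c\langle \phi^{2}\rangle_{X/B}=\pi_*\big((dd^c\phi)^2\big)=2\Big(\int_{\mathbb{P}^1}\omega_{FS}\Big)\,dd^c|z_1|^2\neq 0.$$
So fibrewise ampleness does not rescue the argument; the broken link in your chain of equivalences is the first one (maximality versus $(dd^c\phi)^{d+1}=0$), not injectivity of $\pi_*$ on positive currents. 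The Proposition, like the paper's own proof (which calls $(dd^c\phi)^{d+1}$ a nonnegative measure, i.e.\ a top-degree current), must be read with $\dim Y=1$ --- the only case used in the rest of the paper --- and there your disc-reduction and pointwise analysis of $dd^c\phi$ are unnecessary.
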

\begin{proof}
Assume $\phi$ to be relatively maximal. Since the Deligne pairing construction is stable upon restriction to an open set, we will work over the preimage $U$ of some smooth hyperconvex relatively compact open set in $Y$. If $\phi$ is continuous, we have just mentioned that $(dd^c\phi)^{d+1} = 0$ there, so that  by \eqref{eq_ddc}, it follows that $dd^c E(\phi)\equiv0$. The non-continuous case follows from regularization on $U$: pick a sequence of continuous metrics $\phi_k$ decreasing to $\phi$ on $U$; by Theorem \ref{thm_existencemaximal}, there exists a continuous, relatively maximal psh metric $\Phi_k$ coinciding with $\phi_k$ on $L|_{\pi\mi(\partial U)}$. By maximality, the sequence $\Phi_k$ necessarily converges to $\phi$ (since $\phi$ is assumed to be relatively maximal), and continuity of Deligne pairings along decreasing nets ensures $\langle \phi^{d+1}\rangle_{X/Y}$ to have zero curvature.

\bsni Conversely, assume $\langle \phi^{d+1}\rangle_{X/Y}$ to have zero curvature. In the continuous case, using \eqref{eq_ddc} again, it follows that $(dd^c\phi)^{d+1}=0$, as it is a nonnegative measure. In the general case, we again proceed base-locally, and approximate $\phi$ on the preimage of a relatively compact open subset $U$ via a decreasing sequence of continuous psh metrics $k\mapsto \phi_k$. Let $\Phi_k$ be for each $k$ the unique continuous and relatively maximal metric on $U$ with prescribed boundary condition $\phi_k|_{\pi\mi(\partial U)}$, given by Theorem \ref{thm_existencemaximal}. Let $\Phi$ denote the limit of the decreasing sequence $k\mapsto \Phi_k$, which is relatively maximal. By continuity of the Deligne pairing along decreasing nets, this sequence also defines a decreasing sequence of zero curvature metrics $\langle \Phi_k^{d+1}\rangle_{X/U}$
which has to converge to the metric
$$\langle \Phi^{d+1}\rangle_{X/U},$$
which is a zero curvature metric $\tilde\phi$ on $U$, coinciding on $\partial U$ with $\langle \phi^{d+1}\rangle_{X/U}$. Since $\langle \phi^{d+1}\rangle_{X/U}$ also has zero curvature, we have to have
$$\langle \Phi^{d+1}\rangle_{X/U}=\langle \phi^{d+1}\rangle_{X/U}$$
on all of $U$. Fix $z$ in $U$, and note that this implies
$$E(\Phi_z)=E(\phi_z),$$
while by relative maximality of $\Phi$, $\phi_z\leq \Phi_z$, which implies $\Phi_z=\phi_z$, thus concluding our proof.
\end{proof}

\section{Finite-energy metrics over degenerations.}\label{sect_sect15}

\subsection{Analytic models and degenerations.}

We now turn to our main setting. We will consider the base $Y$ to be the punctured unit disc, and we will assume that our family degenerates (meromorphically) as one approaches zero.

\begin{defi}
Consider a holomorphic submersion $\pi:X\to\pdisc$ with compact Kähler fibres, and a relatively ample line bundle $L$ on $X$. An analytic model (or simply a model) of $X$ is a normal complex analytic space $\cX$, together with a flat, proper holomorphic morphism $\pi:\cX\to\overline\bbd$, realizing an isomorphism $X\simeq \pi\mi(\overline\bbd^*)$. An analytic model of $(X,L)$ is the data of an analytic model $\cX$ on $X$, and an ample line bundle $\cL$ over $\cX$ such that $\cL$ restricted to $\pi\mi(\overline\bbd^*)$ is isomorphic to $L$. We define a degeneration (or a degeneration with meromorphic singularities) to be a morphism $\pi:X\to\pdisc$ as above, such that there exists an analytic model of $(X,L)$.
\end{defi}

\begin{example}
This construction specializes to the following well-known cases:
\begin{itemize}
\item if all the fibres of $X$ are isomorphic to $M$, a model $\cX$ can simply be viewed as a compactification of an isotrivial degeneration of $M$;
\item if the above condition holds, and furthermore the isomorphism is generated by a $\bbc^*$-action, this is simply a (real) one-parameter degeneration of $(M,L|_M)$, i.e. a test configuration for $(M,L|_M)$.
\end{itemize}
\end{example}

\noindent The central fibre of a model of $X$ is the space $\cX_0=\pi\mi(\{0\})$. If the degeneration $X\to\overline\bbd^*$ is isotrivial, we say that $M$, the fibre over $1$, is the generic fibre of $X$.

\subsection{Generalized slopes and Lelong numbers.}\label{sect_lelong}

As we will be working with (generalized) subharmonic functions on the base $\pdisc$, we will often have to work with some notions of Lelong numbers. We review some (old and new) facts in this Section.

\begin{defi}We say that a subharmonic function $f$ on $\bbd^*$ has logarithmic growth (near zero) if there exists a real number $a$ such that $f(z)+a\log|z|$ is bounded above near zero.
\end{defi}

\noindent In particular, a subharmonic function $f$ with logarithmic growth can be extended as a subharmonic function over the entire disc. In this case, one can define its generalized (in the sense that it is possibly signed) Lelong number, as follows. Pick a number $a$ as above. The function
$$g(t):=\sup_{|z|=e^{-t}} (f(z)+a\log|z|),$$ for $t\in [0,\infty)$, is then a convex function of $t$, and the rate of change
$$\frac{g(t)-g(0)}{t}$$
is thus an increasing nonnegative function of $t$, which has a finite value as $t\to\infty$. This corresponds to saying that the limit
$$\lim_{r\to 0}\frac{\sup_{|z|=r} f(z)+a\log|z|}{-\log r}$$
exists and is finite.

\begin{defi}Given a subharmonic function $f$ with logarithmic growth on $\pdisc$, we define its generalized slope (or generalized Lelong number at zero) to be the value
$$\hat f:=\lim_{r\to 0}\frac{\sup_{|z|=r} f(z)+a\log|z|}{-\log r}+a,$$
where $a$ is a real such that $f+a\log|z|$ is bounded near zero. In particular, $\hat f$ is independent of the choice of such an $a$.
\end{defi}

\begin{example}In the case of an $\bbs^1$-invariant subharmonic function $f$, i.e. a convex function on $[0,\infty)$, this simply computes the slope at infinity
$$\lim_{t\to\infty}\frac{f(t)}{t}.$$
\end{example}

\begin{remark}As a consequence of Harnack's inequality, $\hat f$ may equivalently be computed using the integrals
$$\fint_{|z|=r}f(z)\,dz$$
in place of the suprema.
\end{remark}

\noindent The following estimate will be very useful later on.
\begin{lemma}\label{lem_lelongestimate}
Let $f$ a subharmonic function $f$ with logarithmic growth on $\pdisc$. Then, for all $z$, we have
$$f(z)\leq \log(1/|z|)\cdot \hat f+c,$$
where $c=\sup_{|z|=1} f(z)$.
\end{lemma}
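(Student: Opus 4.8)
The plan is to reduce the pointwise inequality to a convexity statement about the circular maximum function, exactly as the discussion preceding the Lemma suggests. First I would fix, as in the definition of $\hat f$, a real number $a$ with $u:=f+a\log|\cdot|$ bounded above near $0$. Since $u$ is subharmonic on $\pdisc$ and bounded above near the puncture, it extends to a subharmonic function on all of $\bbd$. I would then invoke the classical fact that, for a subharmonic $u$, the circular maximum $M(r):=\sup_{|z|=r}u(z)$ is a convex function of $\log r$ (Hadamard's three-lines principle). Setting $t=-\log r$ and $g(t):=M(e^{-t})$, convexity of $M$ in $\log r$ transfers to convexity of $g$ in $t$, because $t\mapsto -t=\log r$ is affine. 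Here $g(0)=M(1)=\sup_{|z|=1}u=\sup_{|z|=1}f=c$, and, as recalled before the statement, $\lim_{t\to\infty}g(t)/t$ exists and is finite; a short computation identifies this slope with $\hat f-a=:\ell$.

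The key step is then the elementary property of a convex function that the secant slopes $t\mapsto (g(t)-g(0))/t$ are nondecreasing, hence bounded above by their limit $\ell$. This immediately gives the affine bound $g(t)\le g(0)+\ell t = c+(\hat f-a)\,t$ for all $t\ge 0$. Note that this argument is insensitive to the sign of $\ell$, which is appropriate since $\hat f$ is a signed (generalized) Lelong number.

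Finally I would unwind the change of variables. On the circle $|z|=r=e^{-t}$ one has $u=f+a\log r=f-at$, so $g(t)=\sup_{|z|=r}f-at$; substituting into the affine bound makes the $a$-dependent terms cancel and yields $\sup_{|z|=r}f\le c+\hat f\,t=c+\hat f\log(1/r)$. Since $f(z)\le \sup_{|w|=|z|}f$ for every $z$, the claimed estimate $f(z)\le \log(1/|z|)\cdot \hat f+c$ follows. I expect no genuine obstacle here: the analytic inputs (convexity of the circular maximum and the monotone-secant property of convex functions) are classical, and the only point requiring care is the bookkeeping with the auxiliary constant $a$, specifically verifying $\ell=\hat f-a$ so that the $a$-terms disappear in the final line.
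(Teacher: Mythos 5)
Your proof is correct and is essentially the paper's own argument: both fix $a$ so that $f+a\log|z|$ is bounded above near $0$, use convexity of $g(t)=\sup_{|z|=e^{-t}}\left(f(z)+a\log|z|\right)$ to bound the secant slope $\bigl(g(t)-g(0)\bigr)/t$ by its limiting value $\hat f-a$, and then unwind the change of variables so that the $a$-dependent terms cancel, using $g(0)=\sup_{|z|=1}f=c$. The only difference is presentational: you make explicit the three-circles convexity, the subharmonic extension across the puncture, and the verification that the limiting slope equals $\hat f-a$, all of which the paper takes from the discussion preceding the Lemma.
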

\begin{proof}
Define the function
$$g(t):=\sup_{|z|=e^{-t}} (f(z)+a\log|z|)$$
as before. Since the rate of change
$$\frac{g(t)-g(0)}{t}$$
is an increasing function of $t$, we have for all $s\in[0,\infty)$, all $|z|=e^{-s}$ and since, having fixed our boundary data, $v(0)=0$, we have
$$\frac{f(z)+a\cdot \log|z| - g(0)}{-\log|z|}\leq \frac{g(s)-g(0)}{s}\leq \lim_{t\to\infty}\frac{g(t)}{t}=\hat f - a.$$
Adding $a$ then concludes the proof.
\end{proof}

\subsection{Plurisubharmonic metrics on degenerations.}

Fix now a degeneration $\pi:X\to \overline\bbd^*$, endowed with a relatively ample line bundle $L$. We will take our interest to plurisubharmonic metrics on $L$, and in particular their singularities. However, a general psh metric on a degeneration can behave very poorly near the singularity, even though we have assumed existence of an analytic model of $X$. Thus, we need to enforce a rather natural growth condition on such psh metrics, akin to that of linear growth for geodesic rays.

%\bsni We consider an analytic model $(\cX,\cL)$ of $(X,L)$, and fix a smooth metric $\refmetric\in\PSH(\cL)$.% A psh metric on $\cL$ which is the restriction to $L|_{\overline\bbd^*}$ of such a metric will be called a reference metric. This allows us to see any metric in $\PSH(L)$ as a sum of a reference metric and a (quasi-psh) function on $X$. 

\begin{defi}\label{def_tame}
We say that a psh metric $\phi$ on $L$ has \textit{logarithmic growth} if there exists a model $(\cX,\cL)$ of $(X,L)$ such that $\phi$ extends as a psh metric on $\cL$.
\end{defi}

\noindent We will write $\PSH(L)$ for the space of psh metrics of logarithmic growth on $L$. If it comes to be necessary, we will rather write $\PSH(X,L)$ when considering the space of (non-necessarily of logarithmic growth) psh metrics on $L$. We will soon show that $\PSH(L)$ has many desirable properties. We will also shortly explain our terminology. We begin with the following result:
\begin{lemma}\label{lem_extpshtame}Given a psh metric $\phi$ on $L$, the following are equivalent:
\begin{enumerate}
\item [(i)] $\phi$ has logarithmic growth, i.e. there exists a model $(\cX,\cL)$ such that $\phi$ extends to a psh metric on $\cL$;
\item [(ii)] for all models $(\cX,\cL)$ of $(X,L)$, there exists a constant $c=c(\cX,\cL)$ such that $\phi+c\cdot \log|z|$ extends to a psh metric on $\cL$;
\item[(iii)] there exists a model $(\cX,\cL)$ and a smooth metric $\refmetric$ on $\cL$ such that 
$$\rho^*\phi(z) \leq \refmetric(z) + O(\log |z|)$$
as $z\to 0$, where $\rho$ denotes the isomorphism between $X$ and $\cX-\cX_0$;
\item[(iv)] for all models $(\cX,\cL)$ of $(X,L)$ and all smooth metrics $\refmetric$ on $\cL
$, (iv) holds.
\end{enumerate}
\end{lemma}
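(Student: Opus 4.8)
The plan is to set up a short cycle of implications, after first translating each condition into a statement about upper bounds of weights against a smooth reference metric. Two standard facts drive everything. First, a psh metric $\phi$ on $L$ \emph{extends as a psh metric on} $\cL$ over a model $\cX$ if and only if, for some (equivalently any) smooth metric $\refmetric$ on $\cL$, the weight $\rho^*\phi-\refmetric$ is bounded above near $\cX_0$: the ``only if'' is immediate, while the ``if'' is the extension theorem for psh functions that are locally bounded above across an analytic divisor in a normal space (the u.s.c. regularization of the naive extension is psh, using that $\phi$ is already psh on $X$). Second, since $z=\pi^*(\text{coordinate})$ is a global holomorphic function with $\operatorname{div}(z)=\cX_0$, the line bundle $\cO_\cX(\cX_0)=\cO_\cX(\operatorname{div}z)$ is trivial; hence for any integer $N$ the twist $\cL(N\cX_0)$ is isomorphic to $\cL$ (in particular ample) and is again a model of $(X,L)$, the identification over $X$ being given by $z^N$. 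Under this identification a metric of weight $w$ on $\cL$ has weight $w+N\log|z|$ on $\cL(N\cX_0)$, so \emph{twisting by $\cO(N\cX_0)$ amounts to adding $N\log|z|$ to the metric}. These two facts are what let me move between the ``extends on a model'' formulation and the ``bounded by a smooth metric up to $O(\log|z|)$'' formulation.

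For $(iii)\Rightarrow(i)$, suppose $\rho^*\phi\le\refmetric+O(\log|z|)$ on a model $(\cX,\cL)$, read as $\rho^*\phi\le\refmetric+C\log(1/|z|)$ for some $C\ge0$. Pick an integer $N\ge C$; then near $\cX_0$ one has $\rho^*\phi+N\log|z|\le\refmetric+(N-C)\log|z|\le\refmetric$ since $\log|z|\le0$, so the weight $\phi+N\log|z|$ is bounded above. As $\log|z|$ is pluriharmonic on $X$ this is still a psh weight, and by the extension theorem it extends across $\cX_0$. By the dictionary above this says precisely that $\phi$ extends as a psh metric on the ample model $\cL(N\cX_0)$, which is exactly $(i)$.

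The main work is $(i)\Rightarrow(iv)$. Given that $\phi$ extends on $(\cX_1,\cL_1)$, fix an arbitrary target model $(\cX_2,\cL_2)$ with an arbitrary smooth metric $\refmetric$. I dominate both by a common smooth model $p_i\colon\cX_3\to\cX_i$ (the normalized and resolved graph of the identity $X\to X$ inside $\cX_1\times_\disc\cX_2$), which is an isomorphism over $X$. Pulling back, $p_1^*\phi$ is bounded above against a smooth metric $\chi$ on $p_1^*\cL_1$ near $(\cX_3)_0$. Now $p_1^*\cL_1$ and $p_2^*\cL_2$ both restrict to $L$ over $X$, so they differ by $\cO(D)$ with $D=\sum a_iE_i$ supported on $(\cX_3)_0$, and comparing smooth metrics gives $\chi-p_2^*\refmetric=-\log|s_D|+O(1)$ for the canonical section $s_D$ of $\cO(D)$. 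The crux is the estimate $\log|s_D|=O(\log|z|)$ near the central fibre: writing $\operatorname{div}(z)=\sum m_iE_i$ with $m_i\ge1$, a local computation at a generic point of $E_i$ gives $\log|s_D|=(a_i/m_i)\log|z|+O(1)$, and an analogous bound holds at the crossings $E_i\cap E_j$, so that $\bigl|\log|s_D|\bigr|\le(\max_i|a_i|/\min_i m_i)\,\bigl|\log|z|\bigr|+O(1)$. Combining, $\rho^*\phi\le p_2^*\refmetric+O(\log|z|)$ on $X$, which is $(iv)$ for the chosen model and metric; as these were arbitrary, $(iv)$ follows. I expect this divisor comparison on a common model to be the only genuinely technical point.

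Finally $(iv)\Rightarrow(iii)$ is trivial (universal to existential), and $(ii)$ is folded in using the dictionary of the first paragraph: $(iv)\Rightarrow(ii)$ takes $c$ equal to the $O(\log|z|)$-constant, so that $\phi+c\log|z|$ is bounded above relative to a smooth metric on $\cL$ and hence extends there, while $(ii)\Rightarrow(iii)$ is immediate since $\phi+c\log|z|\le\refmetric+O(1)$ rearranges to $\rho^*\phi\le\refmetric+O(\log|z|)$. This closes the cycle $(i)\Rightarrow(iv)\Rightarrow(ii)\Rightarrow(iii)\Rightarrow(i)$, yielding all four equivalences; everything outside the estimate $\log|s_D|=O(\log|z|)$ is bookkeeping with the extension theorem and the triviality of $\cO(\cX_0)$.
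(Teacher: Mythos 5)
Your proof is correct and takes essentially the same route as the paper: the psh extension theorem across $\cX_0$ (on a normal model) provides the dictionary between ``$\phi$ extends'' and ``$\rho^*\phi$ is bounded above against a smooth metric'', and the passage to \emph{all} models is obtained exactly as in the paper, by pulling back to a common dominating model and estimating the vertical Cartier divisor $D$ with $\pi_\cX^*\cL + D = \pi_\cY^*\cM$ via $\bigl|\log|f_D|\bigr| = O(|\log|z||)$ near the central fibre. The only differences are presentational: you arrange the implications in a single cycle rather than the paper's pairwise equivalences, and you usefully make explicit the twist by the trivial bundle $\cO_\cX(N\cX_0)$ (equivalently, changing the identification $\cL|_X\simeq L$ by $z^N$) needed to pass from ``$\phi + N\log|z|$ extends on $\cL$'' to ``$\phi$ extends on some model'', a step the paper absorbs into ``classical results of pluripotential theory''.
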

\begin{proof}
By classical results of pluripotential theory, (i)$\Leftrightarrow$(iii) and (ii)$\Leftrightarrow$(iv). Since (iv)$\Rightarrow$(iii) is immediate, we only need to prove (iii)$\Rightarrow$(iv). Assume that 
$$\rho^*\phi(z) \leq \phi_\cL(z) + O(\log |z|)$$
for a smooth reference metric $\phi_\cL$ on $\cL$. Pick another model $(\cY,\cM)$ together with a smooth metric $\phi_\cM$. Note that the equation above holds if and only if the same equation holds for the pullbacks of $\phi_\cL$ and $\rho^*\phi$ to a higher model. Thus, we pick a model $(\cZ,\cN)$ dominating both via $\pi_\cX:\cZ\to \cX$, $\pi_\cY:\cZ\to\cY$. There exists a unique Cartier divisor $D$ supported on the special fibre $\cZ_0$ such that
$$\pi_\cX^*\cL+D=\pi_\cY^*\cM,$$
and given a local equation $f_D$ for $D$, we have
$$\pi_\cX^*\phi_\cL \leq \pi_\cY^*\phi_\cM - \log|f_D| + O(1)\leq \pi_\cY^*\phi_\cM + O(\log|z|).$$
Thus,
$$\pi_\cX^*\rho^*\phi\leq \pi_\cX^*\phi_\cL + O(\log|z|)\leq \pi_\cY^*\phi_\cM + O(\log|z|),$$
as desired.
\end{proof}

\begin{remark}The above result shows that one could equivalently define our growth condition using some fixed reference data $(\cX_\mathrm{ref},\cL_{\mathrm{ref}})$, using e.g. point (ii). In the isotrivial case, there furthermore exists some very natural reference data: the "trivial model" given by the product family of the generic fibre with the whole disc.
\end{remark}

\begin{example}
Let $[0,\infty)\ni t\mapsto \phi_t$ be a ray of psh metrics on an ample line bundle $L$ over a fixed variety $X$. It may be identified as a psh metric $\Phi$ over the trivial model $(X\times\overline\bbd^*,L\times\overline\bbd^*)$, by setting $\Phi_z=\phi_{-\log|z|}$. In this case, the logarithmic growth condition is merely the usual linear growth condition on psh rays.
\end{example}

\noindent We then have as an immediate Corollary:
\begin{corollary}The space $\PSH(L)$ is stable under limits of decreasing nets, finite maxima, and addition of constants. It is furthermore the smallest such set containing all psh metrics on $L$ which admit a locally bounded extension to some model $(\cX,\cL)$ of $(X,L)$.
\end{corollary}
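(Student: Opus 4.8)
The plan is to treat the two assertions separately, relying throughout on the equivalent characterizations of logarithmic growth furnished by Lemma \ref{lem_extpshtame}, most conveniently the growth bound (iii)/(iv).

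First I would record the three closure properties. Since finite maxima, decreasing limits, and addition of constants all preserve plurisubharmonicity of metrics (a decreasing limit being psh as soon as it is not identically $-\infty$), it remains only to track the growth condition. For addition of constants this is immediate from (iii), as $\rho^*(\phi+c)\le\refmetric+O(\log|z|)$ whenever $\rho^*\phi$ does. For a finite maximum $\max_j\phi_j$ with each $\phi_j\in\PSH(L)$, applying (iii) on a common model with a common smooth reference metric $\refmetric$ and taking the largest of the finitely many implied constants gives $\rho^*\max_j\phi_j=\max_j\rho^*\phi_j\le\refmetric+O(\log|z|)$. For a decreasing net $\phi_\alpha\downarrow\phi$, fixing any single index $\alpha_0$ yields $\phi\le\phi_{\alpha_0}$, and since $\phi_{\alpha_0}$ satisfies (iii) so does $\phi$ by domination. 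This settles stability.

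Next I would check that every psh metric admitting a locally bounded extension to some model lies in $\PSH(L)$. If $\phi$ extends to a locally bounded metric on $\cL$, then in particular $\rho^*\phi$ is bounded above near the central fibre, so it satisfies the bound $\rho^*\phi\le\refmetric+O(\log|z|)$ of criterion (iii) a fortiori; hence $\phi\in\PSH(L)$. (One may alternatively invoke the removable-singularity theorem to see that the locally bounded extension is itself psh, giving criterion (i) directly.)

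The substance of the statement, and the step I expect to be the main obstacle, is minimality: any collection $S$ stable under the three operations and containing all locally bounded-extension metrics must contain $\PSH(L)$. Given $\phi\in\PSH(L)$, criterion (i) provides a model $(\cX,\cL)$ and a psh extension $\Phi$ of $\phi$ to $\cL$. Fixing a smooth strictly psh reference metric $\refmetric$ on the ample $\cL$, I would set $\Phi_k:=\max(\Phi,\refmetric-k)$. Each $\Phi_k$ is psh on $\cX$ and locally bounded near the central fibre (bounded above because both $\Phi$ and $\refmetric-k$ are, bounded below because $\Phi_k\ge\refmetric-k$); its restriction $\phi_k:=\Phi_k|_X$ is therefore a psh metric admitting a locally bounded extension, i.e.\ $\phi_k\in S$. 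As $k\to\infty$ the sequence $\phi_k$ decreases to $\phi$ (the maximum eventually equals $\phi$ wherever $\phi>-\infty$, and tends to $-\infty$ elsewhere). Since $S$ is stable under decreasing limits, $\phi\in S$, completing the proof. The delicate points to verify carefully are that $\Phi_k$ is genuinely locally bounded up to the central fibre and that the truncation decreases to $\phi$ rather than to a larger psh metric; both follow from upper semicontinuity of $\Phi$ and smoothness of $\refmetric$.
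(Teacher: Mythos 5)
Your proof is correct, and its key step takes a genuinely different route from the paper's. For the stability assertions you and the paper do essentially the same thing: both reduce to the growth characterizations of Lemma \ref{lem_extpshtame} (the paper invokes (iv) wholesale; you spell out (iii)/(iv) for each operation, including the domination argument for decreasing nets). The difference is in the minimality step. The paper extends $\phi$ to a psh metric $\Phi$ on a model $(\cX,\cL)$ via (i) and then applies Demailly's regularization theorem to produce a decreasing sequence of \emph{smooth} psh metrics on $\cL$ converging to $\Phi$, whose restrictions are the required locally bounded approximants. You instead use the elementary truncation $\Phi_k=\max(\Phi,\refmetric-k)$ against a single fixed smooth psh reference metric on the ample $\cL$: each $\Phi_k$ is psh and locally bounded (above by upper semicontinuity, below by $\refmetric-k$), and $\Phi_k\downarrow\Phi$ pointwise. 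Both arguments exhibit $\phi$ as a decreasing limit of elements of the generating class and then conclude by stability, but yours buys something concrete: it needs only the \emph{existence} of one locally bounded psh metric on $\cL$ (immediate from ampleness), whereas regularization by smooth psh metrics is a deep theorem whose validity on the possibly singular normal analytic space $\cX$ requires additional care; conversely, the paper's route produces smooth approximants, a stronger conclusion than needed here. Your version is thus arguably the more robust proof of this particular statement.
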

\begin{proof}
All of those properties are seen to preserve characterization (iv) above, having fixed some reference model. To show that it is the smallest set closed under those operations, only the statement about decreasing nets could \textit{a priori} be delicate. Given a metric $\phi\in\PSH(L)$, (i) shows that it extends as a genuine metric on some model $(\cX,\cL)$, and Demailly's regularization Theorem yields a decreasing sequence of smooth (in particular locally bounded) psh metrics decreasing to the extension of $\phi$, which shows in particular that $\phi$ belongs to the closure of the set of locally bounded psh metrics on $\cL$, proving our result.
\end{proof}

\subsection{The main setting, and some important examples.}

We begin with some notation. Let $\pi:X\to\pdisc$ be a degeneration together with a relatively ample line bundle $L$. We now, and for the remainder of this article, fix some reference boundary data $\phi_\partial$, which is the restriction to the boundary $\pi\mi(\bbs^1)$ of a smooth psh metric on $L$. This is a minor distinction which will allow us to later obtain a genuine metric structure on a particular subspace of $\PSH(L)$, rather than a pseudometric structure, and therefore we will assume that a metric in $\PSH(L)$ has boundary data equal to $\phi_\partial$. We will define $$\cE^1(L)=\cE^1_{X/\pdisc}(L)\cap\PSH(L)$$ to be the space of fibrewise finite-energy metrics in $\PSH(L)$. We also set
$$\hat\cE^1(L)=\{\phi\in\cE^1(L),\,\phi\text{ is relatively maximal}\}.$$

\begin{example}Although those are seemingly restrictive conditions, they are in fact general enough to encompass the study of maximal geodesic rays. Let $(X,L)$ be a product family $(M\times\pdisc,L_M\times\pdisc)$. For a $\bbs^1$-invariant metric $\phi$ on $L_M\times\pdisc$, seen as a ray $[0,\infty)\ni t\mapsto \phi_t,$
\begin{enumerate}
\item being in $\PSH(L)$ corresponds to the usual linear growth condition;
\item being relatively maximal corresponds to being a geodesic ray in the sense of \cite{bbj};
\item being in $\cE^1(L)$ corresponds to having fibrewise finite-energy and linear growth;
\item therefore, belonging to $\hat\cE^1(L)$ corresponds to being a fibrewise finite-energy geodesic rays with linear growth emanating from a given point - exactly the space of rays $\cR^1(L)$ considered in \cite{darlurays}.
\end{enumerate}
\end{example}

\begin{example}[Relative dimension zero, part 1]\label{exa_reldim}Consider the case of relative dimension zero with a trivial line bundle $L$ over $X\simeq \bbd^*$. Then,
\begin{enumerate}
\item $\PSH(L)$ corresponds to the set of subharmonic functions with logarithmic growth on $\bbd^*$;
\item the class of relatively maximal metrics in $\PSH(L)$ corresponds to the class of harmonic functions;
\item $\cE^1(L)$ corresponds to finite-valued subharmonic functions with logarithmic growth;
\item finally, $\hat\cE^1(L)$ corresponds to finite-valued harmonic functions with logarithmic growth.
\end{enumerate}
It well-known that any harmonic function on the punctured disc decomposes as a sum of a multiple of $\log|z|$ and the real part of an analytic function. This is where our general setting starts diverging from the better-behaved $\bbs^1$-invariant. Indeed, by \cite[Proposition 4.1]{bbj}, for rays of metrics of finite energy, maximality implies linear growth. However, in our case, maximality plus finite energy no longer implies logarithmic growth, since there exist harmonic functions on the punctured disc that do not have logarithmic growth at zero (e.g. the real part of $z\mapsto e^{1/z}$)!

\bsni Assuming logarithmic growth, we then have a full description of $\hat\cE^1(L)$ in relative dimension zero, since we then see that any (finite-valued) harmonic function with logarithmic growth has to be of the form $c\cdot \log|z|+H(z)$, where $H(z)$ is the solution of the generalized Dirichlet problem over the whole disc with the given boundary data. In particular, it is an affine space isomorphic to $\bbr$! This agrees with the radial case, where $\cE^1(L)$ is simply the set of affine functions on $[0,\infty)$ emanating from the same point, which is isomorphic to the set of possible slopes.
\end{example}

\begin{example}[Relative dimension zero, part 2]\label{exa_reldim2}
We now consider what will be a model case for many future considerations: we still work in relative dimension zero over $\pdisc$, but we choose a nontrivial line bundle on $\pdisc$. The existence of a model for $(\pdisc,L)$ means that there is a relatively ample extension $\cL\to\disc$. We can now pick a trivialization $\tau$ of $\cL$ over $\disc$, which allows us to identify a metric $\phi\in\PSH(L)$ (extended to $\cL$ via the logarithmic growth condition!) with the function
$$u=-\log|\tau|_\phi$$
on $\disc$. By the discussion above, if $dd^c\phi=0$, then $u$ decomposes as
$$u(z)=c\cdot \log|z|+H(z),$$
where $H$ is bounded on $\disc$. This decomposition (in particular, $c$ and $H$) depends on $\tau$; but the fact that $\phi$ can be decomposed in any trivialization in such a way does not!
\end{example}

\noindent This is a nice model case for us, because the Deligne pairing construction (in our setting of fibrations over $\pdisc$) naturally gives line bundles over $\pdisc$, as we see in action now.

%\bsni First note that via the machinery of Deligne pairings, the energy of a psh metric in $\cE^1(L)$ can be seen as a subharmonic metric $\langle \phi^{d+1}\rangle$ on the (necessarily trivial) line bundle $\langle L^{d+1} \rangle$ over the punctured disc. Upon choosing a trivialization, we can thus see it as (abusing notation) a mapping $$z\mapsto E(\phi_z).$$
%Then, as mentioned, we have implicitly injected the logarithmic growth condition in the definition inside our space $\hat\cE^1(L)$. 
%A simple counterexample is given in the case of relative dimension zero, where $X=\pdisc$. Then, the energy of a metric coincides with the metric itself, and a maximal metric can be identified with a harmonic function on $\bbd^*$. But there exist harmonic functions on the punctured disc that do not have logarithmic growth at zero (e.g. the real part of $z\mapsto e^{1/z}$).

\begin{corollary}\label{coro_trivialization}The relative maximality condition for metrics in $\cE^1(L)$ can be pushed forward to the base via the Deligne pairing, i.e. we have a well-defined map
$$\hat\cE^1(L)\to \hat\cE^1(\langle L^{d+1}\rangle_{X/\pdisc}).$$
Furthermore, a metric $\phi\in\cE^1(L)$ belongs to $\hat\cE^1(L)$ if and only if, for any model $(\cX,\cL)$ of $(X,L)$ and any trivialization of the Deligne pairing $\langle \cL^{d+1}\rangle_{\cX/\disc}$, denoting $u=-\log|\tau|_{\langle \phi^{d+1}\rangle_{\cX/\disc}}$, one has
$$u(z)= c\cdot \log|z|+H(z),$$
where $c$ is a real constant and $\mathrm{H}$ is a harmonic function on $\disc$ depending only on $\tau$ and the boundary data.
\end{corollary}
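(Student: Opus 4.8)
The plan is to regard the energy metric $\langle\phi^{d+1}\rangle_{X/\pdisc}$ as an object over the relative dimension zero base $\pdisc$ and to apply Proposition \ref{prop_harmonicitymaximaldiscs} together with the explicit descriptions of Examples \ref{exa_reldim} and \ref{exa_reldim2}. Since $\pdisc$ is covered by annuli, which are hyperconvex, Proposition \ref{prop_harmonicitymaximaldiscs} applies: a metric $\phi\in\cE^1(L)$ is relatively maximal if and only if $\langle\phi^{d+1}\rangle_{X/\pdisc}$ has zero curvature. In a trivialization $\tau$, zero curvature means exactly that $u=-\log|\tau|_{\langle\phi^{d+1}\rangle}$ is harmonic on $\pdisc$. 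Thus the whole statement reduces to (i) showing that $u$ is a finite-valued subharmonic function of logarithmic growth, so that the push-forward lands in $\hat\cE^1(\langle L^{d+1}\rangle_{X/\pdisc})$ once harmonicity is granted, and (ii) identifying the decomposition of a harmonic $u$ of logarithmic growth.

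For finiteness and logarithmic growth of $u$, I would argue as follows. Finiteness is fibrewise: over each $z$ the pairing restricts, up to an additive reference term, to $(d+1)E(\phi_z)$, which is finite since $\phi_z\in\cE^1(L_z)$ (the relevant integrals converging by Lemma \ref{lem_finitepairings}). For the growth, fix the model $(\cX,\cL)$ on which $\phi$ extends and a smooth reference metric $\refmetric$ on $\cL$. By characterization (ii) of Lemma \ref{lem_extpshtame} there is a constant $a$ with $\phi+a\log|z|$ psh on $\cL$, hence $\phi\leq\refmetric-a\log|z|+O(1)$ near the central fibre. The change of metric formula \eqref{eq_changeofmetricformula} expresses $\langle\phi^{d+1}\rangle_{X/\pdisc}-\langle\refmetric^{d+1}\rangle_{X/\pdisc}$ as $(d+1)$ times the fibrewise relative energy $E(\phi_z,\refmetric_z)$; monotonicity of the Monge-Ampère energy and its equivariance under adding the fibrewise-constant $-a\log|z|$ then give $E(\phi_z,\refmetric_z)\leq -a\log|z|+O(1)$, while $\langle\refmetric^{d+1}\rangle$ extends smoothly over $\disc$. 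This yields $u\leq -a\log|z|+O(1)$, and subharmonicity of $u$ is immediate from the curvature formula \eqref{eq_ddc}, since $dd^c\langle\phi^{d+1}\rangle=\pi_*((dd^c\phi)^{d+1})\geq 0$.

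With $u$ subharmonic of logarithmic growth, if $\phi$ is relatively maximal then $u$ is harmonic, and the removable-singularity analysis recalled in Example \ref{exa_reldim2} furnishes the decomposition $u=c\log|z|+H(z)$ with $H$ harmonic on all of $\disc$; this proves both well-definedness of the map and the forward implication. Conversely, if such a decomposition holds in one trivialization then $u$ is harmonic, and harmonicity is trivialization-independent (a change of trivialization alters $u$ only by the pluriharmonic $-\log$ of a holomorphic transition function), so $\langle\phi^{d+1}\rangle$ has zero curvature and $\phi$ is relatively maximal by Proposition \ref{prop_harmonicitymaximaldiscs}.

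To see that $H$ depends only on $\tau$ and the fixed boundary data $\phi_\partial$, note that $H$ is the harmonic extension to $\disc$ of the boundary values $u|_{\bbs^1}$. Over a point $z_0\in\bbs^1$ the pairing restricts to the absolute Deligne pairing $\langle\phi_{z_0}^{d+1}\rangle$ on $X_{z_0}$, which depends only on $\phi_{z_0}=\phi_\partial|_{X_{z_0}}$; hence $u|_{\bbs^1}$, and therefore $H$, is determined by $\phi_\partial$ and $\tau$ alone, while the slope $c=\hat u$ carries the remaining dependence on $\phi$. I expect the only genuinely delicate point to be the logarithmic growth estimate of the second paragraph: one must control the relative energy uniformly up to the central fibre of a possibly singular model, for which the monotonicity-plus-comparison argument against a smooth reference metric is the essential input.
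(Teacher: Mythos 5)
Your proof is correct and takes essentially the same approach as the paper: the paper's own proof is a one-liner citing Proposition \ref{prop_harmonicitymaximaldiscs} together with Examples \ref{exa_reldim} and \ref{exa_reldim2}, which is exactly your reduction of relative maximality to harmonicity of the trivialized energy $u$ plus the decomposition of a finite harmonic function of logarithmic growth on $\pdisc$. The details you supply --- finiteness and logarithmic growth of $u$ via the change of metric formula against a smooth reference metric on a model, trivialization-independence of harmonicity, and the identification of $H$ as the harmonic extension of the fixed boundary data --- are precisely what the paper leaves implicit.
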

\begin{proof}
The map above is naturally given by
$$\phi\mapsto \langle \phi^{d+1}\rangle_{X/\pdisc},$$
in which case both statements are corollaries of Proposition \ref{prop_harmonicitymaximaldiscs} and the two examples above.
\end{proof}

\subsection{Metrization.}

As an important, and somewhat surprising consequence of our previous results, we may define a metric structure on the space $\hat\cE^1(L)$. This generalizes e.g. \cite{darlurays}, in which the authors endow the space of maximal psh rays with the distance
$$\hat d_1(\phi_0,\phi_1)=\lim_{t\to\infty}\frac{d_1(\phi_{0,t},\phi_{1,t})}{t}.$$
In the next Sections, we will show that this structure furthermore satisfies some good properties, namely completeness and geodesicity.
\begin{theorem}\label{thm_1mainthmsepdiscs}
The space $\hat\cE^1(L)$ can be endowed with a metric space structure, defined by the generalized slope $\hat d_1(\phi_0,\phi_1)$.
\end{theorem}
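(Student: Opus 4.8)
The plan is to establish that $\hat d_1$, defined as the generalized slope of the fibrewise Darvas distances $z\mapsto d_{1,z}(\phi_{0,z},\phi_{1,z})$, is a genuine metric on $\hat\cE^1(L)$. The fundamental prerequisite is that this map is well-defined, which requires showing that $z\mapsto d_{1,z}(\phi_{0,z},\phi_{1,z})$ is a subharmonic function of logarithmic growth on $\pdisc$, so that its generalized slope $\hat d_1$ (in the sense of Definition of $\hat f$ above) exists and is finite. The key tool here is the expression of the relative Monge-Ampère energy as a Deligne pairing: recalling that $d_{1,z}(\phi_{0,z},\phi_{1,z})=E(\phi_{0,z})+E(\phi_{1,z})-2E(P(\phi_{0,z},\phi_{1,z}))$ and that $(d+1)E(\psi_z)$ is the Deligne pairing metric $\langle\psi^{d+1}\rangle_{X/\pdisc}$ evaluated at $z$, I can rewrite $z\mapsto d_{1,z}$ (up to the multiplicative constant $d+1$) as a sum and difference of Deligne pairing metrics, pushed forward to the base.

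First I would verify subharmonicity. Each Deligne pairing $\langle\psi^{d+1}\rangle_{X/\pdisc}$ attached to a psh metric $\psi\in\cE^1(L)$ is a psh metric on the line bundle $\langle L^{d+1}\rangle_{X/\pdisc}$ over $\pdisc$ by the curvature formula \eqref{eq_ddc}, hence its log-norm is subharmonic. The subtle point is the minus sign in front of the rooftop term $2E(P(\phi_0,\phi_1))$: a difference of subharmonic functions is not subharmonic in general. This is exactly where the relative maximality hypothesis enters. By Proposition \ref{prop_harmonicitymaximaldiscs}, each of $\langle\phi_0^{d+1}\rangle_{X/\pdisc}$ and $\langle\phi_1^{d+1}\rangle_{X/\pdisc}$ has \emph{zero} curvature (is harmonic after choosing a trivialization), since $\phi_0,\phi_1\in\hat\cE^1(L)$ are relatively maximal. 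Thus $E(\phi_{0,z})+E(\phi_{1,z})$ contributes a harmonic function of $z$, and since harmonic functions are both sub- and superharmonic, the sign issue for these two terms disappears: only the $-2E(P(\phi_{0,z},\phi_{1,z}))$ term carries potential subharmonicity, and its \emph{negative} is what we need. Concretely, $P(\phi_0,\phi_1)$ is a decreasing limit of envelopes of continuous psh metrics and lies in $\cE^1_{X/\pdisc}(L)$; its Deligne pairing $\langle P(\phi_0,\phi_1)^{d+1}\rangle_{X/\pdisc}$ is a psh metric on the base, so $-2E(P(\phi_{0,z},\phi_{1,z}))$ is \emph{super}harmonic, and thus $d_{1,z}=(\text{harmonic})-2E(P)$ is subharmonic. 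This is the main obstacle, and relative maximality is precisely the mechanism that resolves it.

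Next I would check logarithmic growth. Since $\phi_0,\phi_1\in\PSH(L)$ both extend to psh metrics on a common model, and $P(\phi_0,\phi_1)\leq\min(\phi_0,\phi_1)$ also has logarithmic growth (being dominated by metrics that do, and bounded below via the finite-energy hypothesis), each Deligne pairing has at most logarithmic growth at zero after trivialization, using Lemma \ref{lem_extpshtame} and the compatibility of Deligne pairings with models. Hence $z\mapsto d_{1,z}$ has logarithmic growth, and $\hat d_1(\phi_0,\phi_1)$ is a finite nonnegative real number by the discussion preceding Definition of the generalized slope.

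Finally I would verify the metric axioms. Nonnegativity is immediate since each fibrewise $d_{1,z}\geq 0$ and the generalized slope of a nonnegative subharmonic function is nonnegative. Symmetry follows from fibrewise symmetry of $d_{1,z}$. For the triangle inequality, the fibrewise Darvas distances satisfy $d_{1,z}(\phi_{0,z},\phi_{2,z})\leq d_{1,z}(\phi_{0,z},\phi_{1,z})+d_{1,z}(\phi_{1,z},\phi_{2,z})$ for every $z$; taking suprema over $|z|=r$ preserves this inequality, and passing to the generalized slope (a limit of averaged ratios) preserves it as well, so $\hat d_1$ is subadditive. The only remaining axiom is definiteness: $\hat d_1(\phi_0,\phi_1)=0$ should force $\phi_0=\phi_1$. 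This is where I expect to use the fixed boundary data $\phi_\partial$: if the slope at zero of the subharmonic function $z\mapsto d_{1,z}$ vanishes while the function also has prescribed (equal) boundary values at $|z|=1$, then by Lemma \ref{lem_lelongestimate} the function is bounded above by its boundary constant, and combined with subharmonicity and nonnegativity this pins down $d_{1,z}\equiv 0$, whence $\phi_{0,z}=\phi_{1,z}$ for all $z$ by definiteness of the fibrewise Darvas metric, giving $\phi_0=\phi_1$. This step relies essentially on having normalized the boundary data, which is exactly why that normalization was introduced.
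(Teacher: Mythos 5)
Your overall architecture matches the paper's: prove that $z\mapsto d_1(\phi_{0,z},\phi_{1,z})$ is subharmonic with logarithmic growth (this is Proposition \ref{prop_distancesubharmonic} in the paper), then deduce the metric axioms, with definiteness coming from the fixed boundary data via Lemma \ref{lem_lelongestimate} (packaged as Lemma \ref{lem_unifcontrol} in the paper). Your log-growth, triangle-inequality and definiteness steps are essentially sound. But the central step, subharmonicity, contains a genuine gap, and it is precisely the step the whole theorem hinges on. You justify the behaviour of the rooftop term by applying the curvature formula \eqref{eq_ddc} to $\langle P(\phi_0,\phi_1)^{d+1}\rangle_{X/\pdisc}$. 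This is not legitimate: $z\mapsto P(\phi_{0,z},\phi_{1,z})$ is a \emph{fibre-by-fibre} envelope, and such families are not in general plurisubharmonic on the total space (fibrewise envelopes need not have psh variation in the base direction), so this family need not lie in $\cE^1_{X/\pdisc}(L)$, and neither Theorem \ref{thm_eonepairing} nor \eqref{eq_ddc} applies to it. Moreover, even granting that premise, your sign logic is backwards: with the paper's identification (a zero-curvature, resp.\ positively curved, metric over the base corresponds after trivialization to a harmonic, resp.\ subharmonic, function $u=-\log|\tau|$), positivity of $\langle P^{d+1}\rangle_{X/\pdisc}$ would make $z\mapsto E(P(\phi_{0,z},\phi_{1,z}))$ \emph{sub}harmonic, hence $-2E(P)$ superharmonic, and then $d_1=(\text{harmonic})-2E(P)$ would be \emph{super}harmonic --- the opposite of what you need and of what you conclude. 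What the proof actually requires is that $z\mapsto E(P(\phi_{0,z},\phi_{1,z}))$ be \emph{super}harmonic.

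This is where the paper's proof does real work, and where relative maximality of the endpoints is used a second time (not only to make $E(\phi_{0,z})$ and $E(\phi_{1,z})$ harmonic, which is the only use you make of it). Fix $a$ and $r$ with $\bbd(a,r)\subset\pdisc$, and let $\psi$ be the relatively maximal metric on $\pi\mi(\bbd(a,r))$ with boundary data $P(\phi_{0,\cdot},\phi_{1,\cdot})$ on the circle, furnished by Theorem \ref{thm_existencemaximal}. Then (i) $z\mapsto\langle\psi_z^{d+1}\rangle$ is harmonic inside the disc by Proposition \ref{prop_harmonicitymaximaldiscs}; and (ii) since $\psi\leq\phi_0,\phi_1$ on the boundary circle and $\phi_0,\phi_1$ are relatively maximal, one gets $\psi\leq\phi_0,\phi_1$ on the whole disc, hence $\psi_z\leq P(\phi_{0,z},\phi_{1,z})$ there, and $\langle\psi_z^{d+1}\rangle\leq\langle P(\phi_{0,z},\phi_{1,z})^{d+1}\rangle$ by monotonicity of the energy. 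Combining (i) and (ii), the circle average of $E(P(\phi_{0,\cdot},\phi_{1,\cdot}))$ equals the circle average of $E(\psi)$, which equals $E(\psi_a)$, which is at most $E(P(\phi_{0,a},\phi_{1,a}))$: this is the super-mean-value inequality, i.e.\ superharmonicity. Without this comparison argument, your proof that the fibrewise distance is subharmonic --- and with it the existence and finiteness of the generalized slope defining $\hat d_1$ --- does not go through.
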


\noindent Naturally, this suggests that the $d_1$-distance is subharmonic with logarithmic growth along metrics in $\hat\cE^1(L)$, a fact that we prove now.
\begin{prop}\label{prop_distancesubharmonic}
Let $\phi_0$, $\phi_1\in\hat\cE^1(L)$. Then, the map
$$z\mapsto d_1(\phi_{0,z},\phi_{1,z})$$
is subharmonic with logarithmic growth on $\bbd^*$.
\end{prop}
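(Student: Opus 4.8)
The plan is to decompose the fibrewise distance through the Monge--Ampère energy and reduce everything to a single superharmonicity statement about the envelope. Writing $P_z := P(\phi_{0,z},\phi_{1,z})$ for the fibrewise rooftop envelope and using the Deligne-pairing expression $(d+1)E(\cdot)=\langle\cdot^{d+1}\rangle$, the definition of $d_1$ gives, up to the additive constant fixed by the reference,
$$
(d+1)\,d_1(\phi_{0,z},\phi_{1,z})=E(\phi_{0,z})+E(\phi_{1,z})-2E(P_z).
$$
Since $\phi_0,\phi_1\in\hat\cE^1(L)$ are relatively maximal, Proposition \ref{prop_harmonicitymaximaldiscs} tells us that $\langle\phi_i^{d+1}\rangle_{X/\pdisc}$ has zero curvature, i.e. that $z\mapsto E(\phi_{i,z})$ is harmonic on $\pdisc$. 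The first two terms thus contribute a harmonic function, and the whole statement reduces to showing that the function $p(z):=E(P_z)$ is \emph{superharmonic}, so that $-2p$ is subharmonic and the sum is subharmonic.

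The heart of the argument is the superharmonicity of $p$, which I would establish by the sub-mean-value inequality on an arbitrary relatively compact hyperconvex subdisc $D\Subset\pdisc$, after first reducing (by Demailly regularization and continuity of the energy along decreasing nets) to the case where $\phi_0,\phi_1$ are continuous psh metrics, so that $z\mapsto P_z$ is a continuous collection of fibrewise finite-energy psh metrics. On $\pi\mi(\bar D)$ I feed the boundary data $P|_{\pi\mi(\partial D)}$ into Theorem \ref{thm_existencemaximal}: hyperconvexity of $D$ supplies (part 2) a continuous psh subsolution, and part 1 then produces a continuous, relatively maximal metric $Q$ on $L|_{\pi\mi(\bar D)}$ with $Q=P$ on $\pi\mi(\partial D)$. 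By Proposition \ref{prop_harmonicitymaximaldiscs} the function $z\mapsto E(Q_z)$ is harmonic on $D$, and it agrees with $p$ on $\partial D$; it is therefore precisely the harmonic replacement of $p|_{\partial D}$.

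The decisive step is the interior comparison, which uses relative maximality \emph{twice and in opposite directions}. Because $Q$ is globally psh on $\pi\mi(D)$ and $Q=P\le\phi_0$ on $\pi\mi(\partial D)$, the relative maximality of $\phi_0$ (Section \ref{sect_relmax}) forces $Q\le\phi_0$ throughout $\pi\mi(D)$, and likewise $Q\le\phi_1$. Hence each $Q_z$ is a fibrewise competitor for the rooftop envelope, so $Q_z\le P_z$, and by monotonicity of the energy $E(Q_z)\le E(P_z)=p(z)$. Thus $p$ dominates its harmonic replacement on every hyperconvex subdisc, which is exactly superharmonicity. Combined with the first paragraph, $(d+1)\,d_1(\phi_{0,\cdot},\phi_{1,\cdot})$ is a sum of two harmonic functions and $-2p$, hence subharmonic. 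I regard the pleasant surprise here as the fact that the potential obstacle---producing a maximal extension matching $P$ on the boundary---is dissolved by the hyperconvexity clause of Theorem \ref{thm_existencemaximal}; the genuine technical residue is the regularization, namely checking that the fibrewise envelopes form a continuous collection admissible for Theorem \ref{thm_existencemaximal} and that all energies converge along the approximating decreasing nets, so that subharmonicity survives the limit.

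It remains to verify logarithmic growth, which is the routine part and only requires a one-sided (upper) bound. Here I would use that $d_1$ is comparable to the $I$-functional, which is a fixed combination of the mixed energies $\langle\phi_0^{\,i},\phi_1^{\,d+1-i}\rangle_{X/\pdisc}$; each of these is a positive (hence subharmonic) pushforward whose weight, by the change-of-metric formula \eqref{eq_changeofmetricformula} against a smooth reference on a model, is bounded above by $O(\log(1/|z|))$ because $\phi_0,\phi_1\in\PSH(L)$ have logarithmic growth. A subharmonic function with such an upper bound has logarithmic growth in the sense of Section \ref{sect_lelong}, so $d_1(\phi_{0,z},\phi_{1,z})$, being nonnegative, subharmonic, and dominated by a function of logarithmic growth, has logarithmic growth as well.
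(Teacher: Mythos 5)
Your subharmonicity argument is, at its core, the paper's own proof: the same decomposition of $d_1$ into the two pure energies and the rooftop-envelope energy, the same reduction via Proposition \ref{prop_harmonicitymaximaldiscs} to superharmonicity of $z\mapsto\langle P(\phi_{0,z},\phi_{1,z})^{d+1}\rangle$, and the same comparison in which the relatively maximal extension of the envelope's boundary data (Theorem \ref{thm_existencemaximal}) is forced below $\phi_0$ and $\phi_1$ by their relative maximality, hence below the fibrewise envelope, giving the harmonic-replacement inequality. The one place where you depart from the paper, the preliminary reduction to continuous $\phi_0,\phi_1$ by Demailly regularization, is circular as written: Demailly approximants of a relatively maximal metric are \emph{not} relatively maximal, yet your decisive step invokes precisely the relative maximality of $\phi_0$ and $\phi_1$. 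Your instinct that some regularization is needed is sound (Theorem \ref{thm_existencemaximal} asks for continuous boundary data, a point the paper passes over silently), but the reduction must preserve maximality: as in the paper's proof of Proposition \ref{prop_harmonicitymaximaldiscs}, one approximates by \emph{continuous relatively maximal} metrics obtained by solving the Dirichlet problem with regularized boundary data, and then notes that the super-mean-value inequality survives the decreasing limit.

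The logarithmic-growth argument, which genuinely differs from the paper's, is broken. You invoke comparability of $d_1$ with the $I$-functional, but no inequality $d_1\lesssim I$ holds: $I$ is invariant under adding a fibrewise constant (the two Monge--Amp\`ere measures have equal mass), while $d_1$ is not. Concretely, take $\phi_1=\phi_0+c\log|z|$ with $c\neq0$; both metrics lie in $\hat\cE^1(L)$ with the same boundary data, $I(\phi_{0,z},\phi_{1,z})\equiv 0$, yet $d_1(\phi_{0,z},\phi_{1,z})=|c|\log(1/|z|)$. Moreover, even an upper bound on $I$ would require \emph{lower} bounds on the pure terms $\langle\phi_i^{d+1}\rangle$, which enter $I$ with a negative sign; upper bounds on every pairing, which is all you provide, give nothing (and harmonicity alone does not supply lower bounds of logarithmic type, cf.\ Example \ref{exa_reldim}). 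The paper's route avoids all of this: by Lemma \ref{lem_extpshtame} one has $\phi_0\leq\refmetric+c\log|z|$; for such a dominated pair the rooftop envelope is explicit, so $d_1(\phi_{0}-c\log|z|,\refmetric)$ is a difference of two energies, hence harmonic with logarithmic singularity by Corollary \ref{coro_trivialization}, and the triangle inequality concludes. You should replace your second part by this argument.
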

\begin{proof}
By the formula for $d_1$,
$$d_1(\phi_{0,z},\phi_{1,z})=\langle\phi_{0,z}^{d+1}\rangle+\langle\phi_{1,z}^{d+1}\rangle-2\langle P(\phi_{0,z},\phi_{1,z})^{d+1}\rangle.$$
By Proposition \ref{prop_harmonicitymaximaldiscs}, the first two metrics on the right-hand side have zero curvature, therefore we are left to show that the metric $\langle P(\phi_{0},\phi_{1})^{d+1}\rangle_{X/\pdisc}$ is superharmonic. We pick any zero curvature metric $\refmetric$ on $\bbd^*$, and note that $\langle P(\phi_{0},\phi_{1})^{d+1}\rangle_{X/\pdisc}$ is superharmonic if and only if $\langle P(\phi_{0},\phi_{1})^{d+1}\rangle_{X/\pdisc}-\refmetric$ is a superharmonic function. Fix $a\in\bbd^*$ and let $r>0$ be such that $\bbd(a,r)=\{|z-a|\leq r\}\subset \bbd^*$. Let $\psi$ be the relatively maximal psh metric on $\bbd(a,r)$ and with boundary data 
\begin{equation}\label{eq_lem_dist_1}
\phi(z)=P(\phi_{0,z},\phi_{1,z}),\,\forall z\in S(a,r).
\end{equation}
Such a metric is given by Theorem \ref{thm_existencemaximal}. We now deduce the two following facts:
\begin{itemize}
\item[(i)] by maximality of $\psi$, if follows from Proposition \ref{prop_harmonicitymaximaldiscs} that $z\mapsto \langle \psi^{d+1}\rangle_{X/\pdisc}$ has zero curvature;
\item[(ii)] since on the boundary $S(a,r)$ we have $\psi(z)\leq \phi_{0,z},\phi_{1,z}$, and $\phi_{0}$, $\phi_{1}$ are relatively maximal, we have
$$\psi_z\leq \phi_{0,z},\phi_{1,z}$$
for all $z\in\bbd(a,r)$, thus $\psi_z\leq P(\phi_{0,z},\phi_{1,z})$ and finally
$$\langle\psi_z^{d+1}\rangle\leq \langle P(\phi_{0,z},\phi_{1,z})^{d+1}\rangle$$
by monotonicity of the Monge-Ampère energy.
\end{itemize}
Using (\ref{eq_lem_dist_1}), (i), and (ii) in order, we find: 
\begin{align*}
\fint_{S(a,r)}\langle P(\phi_{0,z},\phi_{1,z})^{d+1}\rangle - \phi_{\mathrm{ref},z}&=\fint_{S(a,r)} \langle \psi_z^{d+1}\rangle - \phi_{\mathrm{ref},z}\\
&=\langle \psi_a^{d+1}\rangle - \phi_{\mathrm{ref},a}\\
&\leq \langle P(\phi_{0,a},\phi_{1,a})^{d+1}\rangle - \phi_{\mathrm{ref},a}.
\end{align*}
As the inequality is true for all $a$, our metric $\langle P(\phi_{0},\phi_{1})^{d+1}\rangle_{X/\pdisc}$ is then superharmonic.

\bsni We now show that there exists a real number $a\in\bbr$ such that
$$z\mapsto d_1(\phi_{0,z},\phi_{1,z})+a\log|z|$$
is bounded above. By Lemma \ref{lem_extpshtame}(iv), for any model $(\cX,\cL)$ of $(X,L)$, fixing a reference metric $\refmetric\in\hat\cE^1(L)$ which is locally bounded on $\cL$, one has (up to adding large enough constants)
$$\phi_0\leq\refmetric + c\cdot\log|z|$$
for some real constant $c$. In this case,
$$d_1(\phi_{0,z}-c\cdot \log|z|,\phi_{\mathrm{ref},z})=\langle(\phi_{0,z}-c\cdot \log|z|)^{d+1}\rangle-\langle\phi_{\mathrm{ref},z}^{d+1}\rangle,$$
and the term on the right-hand side is a harmonic function with logarithmic singularities at the origin, so that substracting constants the result also holds for $z\mapsto d_1(\phi_{0,z},\phi_{\mathrm{ref},z})$. Proceeding similarly for $\phi_1$, our result then follows from the triangle inequality.
\end{proof}

\noindent Finally, we note an immediate consequence of Lemma \ref{lem_lelongestimate} together with the previous Proposition \ref{prop_distancesubharmonic}.
\begin{lemma}\label{lem_unifcontrol}
Let $\phi_0$, $\phi_1\in\hat\cE^1(L)$. Then, for all $z$ on the base, we have
$$d_1(\phi_{0,z},\phi_{1,z})\leq \hat d_1(\phi_0,\phi_1)\log(1/|z|).$$
\end{lemma}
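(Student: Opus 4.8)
The plan is to apply Lemma \ref{lem_lelongestimate} directly to the function $f(z) := d_1(\phi_{0,z},\phi_{1,z})$. By Proposition \ref{prop_distancesubharmonic}, this $f$ is subharmonic with logarithmic growth on $\pdisc$, which is precisely the hypothesis required by Lemma \ref{lem_lelongestimate}. Moreover, by the very definition of $\hat d_1$ as the generalized slope of the collection of fibrewise distances (Theorem \ref{thm_1mainthmsepdiscs}), one has $\hat f = \hat d_1(\phi_0,\phi_1)$. Thus Lemma \ref{lem_lelongestimate} would yield immediately
$$f(z) \leq \log(1/|z|)\cdot \hat d_1(\phi_0,\phi_1) + c,\qquad c = \sup_{|z|=1} f(z).$$

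The only point to verify is that the additive constant $c$ vanishes, and this is where the fixed reference boundary data enters. Recall that every metric in $\PSH(L)$, and in particular $\phi_0$ and $\phi_1 \in \hat\cE^1(L)$, has boundary data equal to $\phi_\partial$ on $\pi\mi(\bbs^1)$. Hence for every $z$ with $|z|=1$ the fibrewise metrics $\phi_{0,z}$ and $\phi_{1,z}$ agree (both equal $\phi_{\partial,z}$), so that $d_1(\phi_{0,z},\phi_{1,z})=0$. Taking the supremum over the unit circle gives $c=0$, and the stated inequality follows.

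There is essentially no real obstacle here beyond bookkeeping: the content is entirely carried by Proposition \ref{prop_distancesubharmonic} (subharmonicity and logarithmic growth of the fibrewise distance) and by Lemma \ref{lem_lelongestimate} (the slope estimate), while the normalization of boundary data is what upgrades the estimate-with-constant into the clean bound claimed. The single thing I would double-check is that the definition of $\hat d_1$ is exactly the generalized slope of $f$, with no sign convention or additive shift differing from that of Section \ref{sect_lelong}, so that the leading coefficient matches verbatim; this is immediate from the conventions fixed there.
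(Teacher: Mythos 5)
Your proposal is correct and takes exactly the paper's route: the paper presents this lemma as an immediate consequence of Lemma \ref{lem_lelongestimate} applied to $z\mapsto d_1(\phi_{0,z},\phi_{1,z})$, whose subharmonicity and logarithmic growth are supplied by Proposition \ref{prop_distancesubharmonic}. Your verification that the additive constant $c$ vanishes thanks to the fixed boundary data $\phi_\partial$ is precisely the content of the remark the paper places right after the lemma, which notes that without this normalization one would pick up the error term $\sup_{|z|=1} d_1(\phi_{0,z},\phi_{1,z})$.
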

\begin{remark}Had we not fixed boundary data, we would have an additional error term in the above expression, corresponding exactly to the supremum of $z\mapsto d_1(\phi_{0,z},\phi_{1,z})$ for $z\in\bbs^1$.
\end{remark}

\noindent We are now equipped to endow the space $\hat\cE^1(L)$ with a metric structure. 
\begin{proof}[Proof of Theorem \ref{thm_1mainthmsepdiscs}.]
That $\hat d_1(\phi,\phi)=0$ and $\hat d_1(\phi_0,\phi_1)=\hat d_1(\phi_1,\phi_0)$ are immediate statements, and nonnegativity will follow from the triangle inequality and the former statement. Therefore, we must show that for any other $\phi_2\in\hat\cE^1(L)$, we have
$$\hat d_1(\phi_0,\phi_1)\leq \hat d_1(\phi_0,\phi_2)+\hat d_1(\phi_2,\phi_1).$$
Let $a_{01}$ be such that $d_1(\phi_{0,z},\phi_{1,z}) + a_{01}\log|z|$ is bounded above on the punctured disc, and define similarly $a_{02}$, $a_{21}$. We have by the triangle inequality of the fibrewise metric $d_1$
$$d_1(\phi_{0,z},\phi_{1,z})\leq d_1(\phi_{0,z},\phi_{2,z})+d_1(\phi_{2,z},\phi_{1,z})$$
for all $z$ in $\bbd^*$, and in particular
$$d_1(\phi_{0,z},\phi_{1,z})+(a_{02}+a_{21})\log|z|\leq d_1(\phi_{0,z},\phi_{2,z})+d_1(\phi_{2,z},\phi_{1,z})+(a_{02}+a_{21})\log|z|.$$
Upon taking (negative) Lelong numbers and adding constants, we find
%$$-\nu_0(d_1(u_z,v_z)+(a_{u,w}+a_{w,v})\log|z|)\leq -\nu_0(d_1(u_z,w_z)+d_1(w_z,v_z)+(a_{u,w}+a_{w,v})\log|z|),$$
%i.e.
\begin{align*}&a_{02}+a_{21}-\nu_0(d_1(\phi_{0,z},\phi_{1,z})+(a_{02}+a_{21})\log|z|)\\
&\leq a_{02} -\nu_0(d_1(\phi_{0,z},\phi_{2,z})+a_{02}\log|z|)+a_{21} -\nu_0(d_1(\phi_{2,z},\phi_{1,z})+a_{21}\log|z|).
\end{align*}
Since
$$z\mapsto d_1(\phi_{0,z},\phi_{1,z})+(a_{02}+a_{21})\log|z|$$
is bounded above, the previous equation is by the very definition of $d_1$ equivalent to
$$\hat d_1(\phi_0,\phi_1)\leq \hat d_1(\phi_0,\phi_2)+\hat d_1(\phi_2,\phi_1),$$
as desired. Finally, assuming $\hat d_1(\phi_0,\phi_1)=0$, Lemma \ref{lem_unifcontrol} shows that we must have $\phi_0=\phi_1$.
\end{proof}

\subsection{Completeness.}

We now prove completeness of our space.
\begin{theorem}\label{thm_1completeness}
The metric space $(\hat\cE^1(L),\hat d_1)$ is complete.
\end{theorem}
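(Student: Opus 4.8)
The plan is to reduce global completeness to the fibrewise completeness of $(\cE^1(X_z,L_z),d_{1,z})$ (Darvas, \cite{darv}), using Lemma \ref{lem_unifcontrol} as the bridge between the generalized-slope distance $\hat d_1$ and the pointwise distances $d_1(\cdot_z,\cdot_z)$. So let $(\phi_n)$ be $\hat d_1$-Cauchy. First I would extract a subsequence $(\phi_{n_k})$ with $\hat d_1(\phi_{n_k},\phi_{n_{k+1}})\le 2^{-k}$. Then, for each fixed $z\in\pdisc$, Lemma \ref{lem_unifcontrol} gives $d_1(\phi_{n_k,z},\phi_{n_{k+1},z})\le 2^{-k}\log(1/|z|)$, so $(\phi_{n_k,z})_k$ is $d_{1,z}$-Cauchy and converges to some $\phi_z\in\cE^1(X_z,L_z)$; telescoping the fibrewise triangle inequality yields the key uniform estimate
\[
d_1(\phi_{n_k,z},\phi_z)\le 2^{-k+1}\log(1/|z|)\qquad\text{for all }z.
\]

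The heart of the argument is to show that the fibrewise family $z\mapsto\phi_z$ assembles into a genuine element of $\hat\cE^1(L)$: globally plurisubharmonic, of logarithmic growth, and relatively maximal. For global plurisubharmonicity I would realize $\phi$ via the increasing rooftop envelopes $u_k:=P(\phi_{n_j}:j\ge k)$: these are globally psh metrics (the rooftop of a family of globally psh metrics is again globally psh after upper-regularization), they increase in $k$, and by the standard fibrewise theory of \cite{darv} their fibres $d_{1,z}$-converge to $\phi_z$; hence $\phi=(\sup_k u_k)^*$ is globally psh with fibres $\phi_z$. Logarithmic growth follows because a $\hat d_1$-Cauchy sequence is $\hat d_1$-bounded, so the generalized slopes $\hat d_1(\phi_n,\refmetric)$ are uniformly bounded; combined with Lemma \ref{lem_unifcontrol} and the upper-bound characterization of logarithmic growth (Lemma \ref{lem_extpshtame}(iii)), this produces a uniform bound $\phi\le\refmetric+C\log(1/|z|)$.

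For relative maximality I would invoke Proposition \ref{prop_harmonicitymaximaldiscs}: since each $\phi_n\in\hat\cE^1(L)$, the pushed-forward Deligne energies $\langle\phi_n^{d+1}\rangle_{X/\pdisc}$ have zero curvature, i.e. are harmonic on $\pdisc$. Fibrewise $d_1$-continuity of the Monge--Ampère energy gives pointwise convergence $\langle\phi_{n_k}^{d+1}\rangle_{X/\pdisc}\to\langle\phi^{d+1}\rangle_{X/\pdisc}$ (the limit being well-defined by the extended pairing of Theorem \ref{thm_eonepairing}, as $\phi\in\cE^1(L)$); the uniform slope bounds upgrade this, via Harnack, to local uniform convergence, so the limit is harmonic and Proposition \ref{prop_harmonicitymaximaldiscs} yields relative maximality. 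Finally, the convergence $\phi_n\to\phi$ is then essentially free: the displayed uniform estimate gives $\hat d_1(\phi_{n_k},\phi)\le 2^{-k+1}\to 0$ directly from the definition of the generalized slope, and the Cauchy property promotes subsequential convergence to convergence of the whole sequence.

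The main obstacle I expect is precisely the passage from the fibrewise limit to a global object: verifying both plurisubharmonicity in the total-space direction and relative maximality of the assembled limit. Since $\hat d_1$ only sees fibrewise information (through generalized slopes), neither property is automatic, and both must be recovered — the first from the global envelope construction, the second from harmonicity of the limiting Deligne energy — the latter relying crucially on the uniform-in-$z$ control furnished by the telescoping estimate and the uniform slope bounds.
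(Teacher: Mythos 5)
Your skeleton is the same as the paper's: Lemma \ref{lem_unifcontrol} turns a $\hat d_1$-Cauchy sequence into fibrewise $d_1$-Cauchy sequences, Darvas's fibrewise completeness (\cite[Theorem 3.36]{darbook}) produces the candidate limit $z\mapsto\phi_z$, relative maximality of the assembled metric is recovered from harmonicity of the limiting Deligne energies via Proposition \ref{prop_harmonicitymaximaldiscs}, and the limit is identified as the $\hat d_1$-limit. The paper packages the "limit lies in $\hat\cE^1(L)$" step as Proposition \ref{prop_closed}. Your maximality step is essentially the paper's (your telescoping bound $d_1(\phi_{n_k,z},\phi_z)\le 2^{-k+1}\log(1/|z|)$ already gives locally uniform convergence of the energies, so the appeal to Harnack is unnecessary), and your endgame is actually cleaner: since the telescoping estimate holds for \emph{all} $z$, it yields $\hat d_1(\phi_{n_k},\phi)\le 2^{-k+1}$ directly, a point the paper leaves implicit.

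The genuine gap is in your plurisubharmonicity argument. You define $u_k:=P(\phi_{n_j}:j\ge k)$ as a \emph{global} rooftop envelope (that is what makes it globally psh), but then claim its \emph{fibres} $d_{1,z}$-converge to $\phi_z$ "by the standard fibrewise theory of \cite{darv}". That theory controls the \emph{fibrewise} envelopes $P_z(\phi_{n_j,z}:j\ge k)$, and these only bound $u_{k,z}$ from above, since every global candidate restricts to a fibrewise candidate. You have no lower bound: the global envelope must additionally be psh in the base direction, and nothing rules out that this extra constraint forces $u_{k,z}$ far below the fibrewise envelope — it is not even clear that the infimum of infinitely many metrics in $\hat\cE^1(L)$ admits \emph{any} global psh minorant, so $(\sup_k u_k)^*$ could a priori be $-\infty$ or a psh metric whose fibres are strictly smaller than $\phi_z$, in which case your maximality step and final estimate concern the wrong object. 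Closing this gap would amount to proving that the family of fibrewise rooftops has (approximately) psh variation in $z$, which is exactly the kind of statement the paper avoids: in Proposition \ref{prop_distancesubharmonic} and Theorem \ref{thm_isometry} only the \emph{energy} $z\mapsto\langle P(\phi_{0,z},\phi_{1,z})^{d+1}\rangle$ is shown to be superharmonic, and even that requires an argument with relatively maximal envelopes over annuli. The paper's route (proof of Proposition \ref{prop_closed}) is more pedestrian and works: base-locally uniform fibrewise $d_1$-convergence gives $L^1$-convergence of $\phi_{n_k}$ to the fibrewise limit on each $\pi\mi(U)$ against a smooth family of volume forms, and an $L^1$-limit of psh metrics is psh; you should replace the envelope construction by this. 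A secondary soft spot of the same nature: your logarithmic-growth step converts $\hat d_1$-bounds into the pointwise bound $\phi\le\refmetric+C\log(1/|z|)$, which tacitly uses a comparison between $\sup_{X_z}(\phi_z-\refmetric_z)$ and $d_1(\phi_z,\refmetric_z)$ with constants controlled as $z\to 0$; this needs justification.
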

\noindent In order to prove this, discuss possible topologies for $\cE^1(L)$.
\begin{remark}[Topologies on $\cE^1(L)$]We have already considered the topology of fibrewise $d_1$-convergence on $\cE^1(L)$. There is a yet finer topology, that of locally uniform fibrewise $d_1$-convergence, by which $\phi_k$ converges to $\phi$ if, for all relatively compact open sets $U$ in $X$, $d_1(\phi_{k,z},\phi_z)\to 0$ uniformly in $z$ on $U$. In between the two, there is the topology of "base-locally" uniform fibrewise $d_1$-convergence, which is the same but over the $\pi\mi(U)$ with $U$ relatively compact open in $\bbd^*$. By the previous Lemma, the latter is equivalent to the topology induced by $\hat d_1$ on $\hat\cE^1(L)$!
\end{remark}

\begin{prop}\label{prop_closed}Let $\phi_k$ be a sequence of metrics in $\hat\cE^1(L)$ converging to some metric $\phi$ on $L$ for the topology of base-locally uniform fibrewise $d_1$ convergence. Then, $\phi$ belongs to $\hat\cE^1(L)$.
\end{prop}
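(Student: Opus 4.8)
The plan is to verify, in turn, the three defining properties of membership in $\hat\cE^1(L)$: fibrewise finite energy, global plurisubharmonicity with logarithmic growth, and relative maximality.

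\bsni First I would settle fibrewise finite energy. By hypothesis, for each relatively compact open $U\subset\pdisc$ the convergence $d_1(\phi_{k,z},\phi_z)\to 0$ is uniform in $z\in\pi\mi(U)$; in particular, for each fixed $z$ the sequence $(\phi_{k,z})_k$ is $d_1$-Cauchy in $\cE^1(L_z)$. Since $(\cE^1(L_z),d_1)$ is complete (Darvas), its limit $\phi_z$ lies in $\cE^1(L_z)$, so $\phi$ is fibrewise of finite energy. For global plurisubharmonicity, the key observation is that fibrewise $d_1$-convergence upgrades to $L^1_{\mathrm{loc}}$-convergence of potentials on the total space: on each fibre, $d_1(\phi_{k,z},\phi_z)\to 0$ controls $\int_{X_z}|\phi_{k,z}-\phi_z|\,\omega_z^{d}$, and uniformity in $z$ over $\pi\mi(U)$ then yields $\phi_k\to\phi$ in $L^1_{\mathrm{loc}}(\pi\mi(U))$ after integrating in $z$. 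As each $\phi_k$ is globally psh on $X$, the $L^1_{\mathrm{loc}}$ limit is psh up to upper semicontinuous regularization; and since $\phi^*_z=\phi_z$ off a pluripolar set while both have finite energy on $X_z$, they coincide, so $\phi=\phi^*$ is globally psh.

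\bsni Then comes logarithmic growth, which I expect to be the main obstacle. Finite energy and (eventual) relative maximality do not by themselves suffice, as Example \ref{exa_reldim} exhibits harmonic functions of non-logarithmic growth; the condition must therefore be propagated from the $\phi_k$ with \emph{uniform} constants. Fixing a reference model $(\cX,\cL)$ and a locally bounded $\refmetric\in\hat\cE^1(L)$, I would bound $\sup_{X_z}(\phi_z-\refmetric_z)$ by its $\omega_z$-average up to a constant depending only on $(X_z,\omega_z)$ — uniform in $z$ on base-compacts because the fibres vary smoothly — and then control that average by $d_1(\phi_z,\refmetric_z)$, which by Lemma \ref{lem_unifcontrol} is at most $\hat d_1(\phi,\refmetric)\log(1/|z|)$. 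This yields $\rho^*\phi\le\refmetric+a\log|z|+O(1)$ with $a$ independent of $z$, hence $\phi\in\PSH(L)$ by Lemma \ref{lem_extpshtame}(iii). The delicate point, and the crux of the whole argument, is the uniformity over the degenerating family both of the sup-versus-average comparison and of the passage from $d_1$ to the relevant $L^1$-norm.

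\bsni Finally, relative maximality. By Proposition \ref{prop_harmonicitymaximaldiscs} it suffices to show that $\langle\phi^{d+1}\rangle_{X/\pdisc}$ has zero curvature. Since $|E(\phi_{k,z})-E(\phi_z)|\le d_1(\phi_{k,z},\phi_z)\to 0$ uniformly on each $\pi\mi(U)$, and since each $z\mapsto E(\phi_{k,z})$ is, in any trivialization, a harmonic function — it is the zero-curvature trivialized Deligne pairing $\langle\phi_k^{d+1}\rangle_{X/\pdisc}$, by relative maximality of $\phi_k$ together with Proposition \ref{prop_harmonicitymaximaldiscs} — the locally uniform limit $z\mapsto E(\phi_z)$ is again harmonic. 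Hence $\langle\phi^{d+1}\rangle_{X/\pdisc}$ has zero curvature and $\phi$ is relatively maximal. Combining the three steps places $\phi$ in $\hat\cE^1(L)$, as required.
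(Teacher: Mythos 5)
Your first, second, and fourth steps are sound and essentially reproduce the paper's own argument: fibrewise $d_1$-completeness gives fibrewise finite energy, base-locally uniform fibrewise $d_1$-convergence upgrades to $L^1_{\mathrm{loc}}$-convergence and hence plurisubharmonicity of the limit, and the trivialized energies $z\mapsto\langle\phi_{k,z}^{d+1}\rangle$ are harmonic and converge locally uniformly, so the limit energy is harmonic and Proposition \ref{prop_harmonicitymaximaldiscs} yields relative maximality. The problem is your third step, the logarithmic growth of $\phi$, which you yourself identify as the crux.

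That step is circular: you invoke Lemma \ref{lem_unifcontrol} for the pair $(\phi,\refmetric)$, but that lemma is stated (and proved, via Proposition \ref{prop_distancesubharmonic}) only for pairs of metrics already lying in $\hat\cE^1(L)$ --- in particular already having logarithmic growth --- and membership of $\phi$ in $\hat\cE^1(L)$ is precisely what you are trying to establish. Nor can the step be repaired by a triangle inequality through the $\phi_k$: the hypothesis is only uniformity of $d_1(\phi_{k,z},\phi_z)$ on $\pi\mi(U)$ for $U$ relatively compact in $\pdisc$, which gives no control whatsoever as $z\to 0$; for the same reason your sup-versus-average constants, which you control ``on base-compacts because the fibres vary smoothly'', are of no use near $z=0$, where the fibres degenerate. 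It is worth noting that the paper sidesteps this entirely: its proof begins ``Pick a sequence $k\mapsto\phi_k\in\hat\cE^1(L)$ and a fixed metric $\phi$ in $\cE^1(L)$'', so logarithmic growth (and fibrewise finite energy) of the limit is in effect part of the hypothesis, and only plurisubharmonicity and relative maximality are actually proven. In the one place the proposition is applied, Theorem \ref{thm_1completeness}, the missing control near $z=0$ comes from the $\hat d_1$-Cauchy condition: there Lemma \ref{lem_unifcontrol} is applied to pairs $(\phi_m,\phi_n)$ which both lie in $\hat\cE^1(L)$, giving $d_1(\phi_{m,z},\phi_{n,z})\leq\varepsilon\log(1/|z|)$ for all $z$, which is legitimate. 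So either add the hypothesis $\phi\in\PSH(L)$ (or $\phi\in\cE^1(L)$) to your statement, as the paper's proof implicitly does, or supply a genuinely new argument for the growth near $0$; the one you propose does not close.
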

\begin{proof}
Pick a sequence $k\mapsto \phi_k\in\hat\cE^1(L)$ and a fixed metric $\phi$ in $\cE^1(L)$. Assume that, for a relatively compact open $U\subset\pdisc$ we have
$$d_1(\phi_{k,z},\phi_z)\to 0$$
uniformly in $z\in \pi\mi(U)$. Since convergence in Monge-Ampère energy is subordinate to $d_1$-convergence we have that
$$\langle\phi_{k,z}^{d+1}\rangle\to \langle\phi_z^{d+1}\rangle$$
again uniformly in $z$; by maximality, the metrics $\langle\phi_{k}^{d+1}\rangle_{X/\pdisc}$ are zero curvature, and an uniform limit of such is again zero curvature. As having zero curvature is a local property and the $\pi\mi(U)$ cover $X$, we then have that $\langle\phi^{d+1}\rangle_{X/\pdisc}$ has zero curvature on all of $X$. By virtue of being in $\cE^1(L)$, this implies $\phi$ to be relatively maximal by \ref{prop_harmonicitymaximaldiscs}, as long as we can show that the limit is psh. On $\pi\mi(U)$, there exists $c>0$ independent of $z\in U$ such that
$$\int (\phi_{k,z}-\phi_z)\,d\mu_z \leq c\cdot d_1(\phi_{k,z},\phi_z)\leq c\cdot c'$$
against a fixed smooth family of volume forms $z\mapsto \mu_z$, so that uniform fibrewise $d_1$-convergence implies $L^1$ convergence of $\phi_{k}$ to $\phi$ on $\pi\mi(U)$, which establishes plurisubharmonicity of the limit there, hence on $X$.
\end{proof}

\bsni We may now prove completeness.
\begin{proof}[Proof of Theorem \ref{thm_1completeness}.]
Consider a Cauchy sequence $m\mapsto \phi_m\in\hat\cE^1(L)$. For all $\varepsilon$ and all large enough $m$, $n$, 
$$\hat d_1(\phi_m,\phi_n)\leq \varepsilon,$$
which by Lemma \ref{lem_unifcontrol} implies the individual sequences $m\mapsto \phi_{m,z}$ to be $d_1$-Cauchy. By completeness of the fibrewise $\cE^1$ spaces (\cite[Theorem 3.36]{darbook}), those sequences $d_1$-converge to a unique finite-energy metric $\phi(z)$, and in fact this convergence is seen to hold base-locally uniformly fibrewise. The mapping
$$z\mapsto \phi(z)$$
is therefore a metric in $\hat\cE^1(L)$ by Proposition \ref{prop_closed}.
\end{proof}

\subsection{Geodesics.}

We now show that, much as in the absolute $\cE^1$ setting, one can find geodesics in $\hat\cE^1(L)$. 

\begin{theorem}\label{thm_1geodesics}
Given any $\phi_0$, $\phi_1\in\hat\cE^1(L)$, the psh geodesic segment $t\mapsto \phi_t$ joining them, given by Theorem \ref{thm_relsegments}, is $\hat d_1$-geodesic in the metric sense, i.e.
$$\hat d_1(\phi_t,\phi_s)=|t-s|\hat d_1(\phi_0,\phi_1).$$
Furthermore, given any model $(\cX,\cL)$ of $(X,L)$ and a trivialization $\tau$ of $\langle \cL^{d+1}\rangle$ over $\disc$, setting
$$u_t:=-\log|\tau|_{\phi_t},$$
the segment of generalized slopes
$$t\mapsto \hat u_t$$
is affine on $[0,1]$; and $t\mapsto \phi_t$ is uniquely characterized by this property among psh segments.
\end{theorem}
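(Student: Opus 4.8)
The claim has three parts: (1) the metric-geodesic identity $\hat d_1(\phi_t,\phi_s)=|t-s|\hat d_1(\phi_0,\phi_1)$; (2) affineness of $t\mapsto\hat u_t$; and (3) uniqueness among psh segments. I'll organize around the observation that all three reduce to the fibrewise situation via the Deligne-pairing/Lelong-number machinery already in place.

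Let me think about part (1) carefully. Fix $0\le s<t\le 1$. By Theorem \ref{thm_relsegments} the segment is fibrewise geodesic, so in each fibre $X_z$ the fibrewise $d_1$-distance satisfies $d_1(\phi_{s,z},\phi_{t,z})=|t-s|\,d_1(\phi_{0,z},\phi_{1,z})$ (this is Darvas's fibrewise geodesic property). Taking the generalized slope $\hat{(\cdot)}$ of both sides — which is legitimate because Proposition \ref{prop_distancesubharmonic} guarantees $z\mapsto d_1(\phi_{s,z},\phi_{t,z})$ is subharmonic of logarithmic growth, so $\hat d_1(\phi_s,\phi_t)$ is defined — and using that $\hat{(\cdot)}$ scales linearly under multiplication by the positive constant $|t-s|$ gives exactly the desired identity. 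The only subtlety is checking that the fibrewise geodesics for the endpoints $\phi_s,\phi_t$ coincide (up to reparametrization) with the restriction of the original geodesic, which is immediate from uniqueness of fibrewise psh geodesics.

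For part (2), writing $u_t=-\log|\tau|_{\langle\phi_t^{d+1}\rangle}$, I recall from the energy/Deligne-pairing dictionary that $\langle\phi_t^{d+1}\rangle=(d+1)E(\phi_t)$ as metrics on the base. The fibrewise characterization of psh geodesics (as stated at the end of Theorem \ref{thm_relsegments}) is precisely that $t\mapsto E(\phi_{t,z})$ is affine in each fibre $z$. Consequently the metric $\langle\phi_t^{d+1}\rangle_{X/\pdisc}$ is an affine combination $(1-t)\langle\phi_0^{d+1}\rangle+t\langle\phi_1^{d+1}\rangle$ of metrics on the base line bundle. Taking $-\log|\tau|$ of this convex combination and then the generalized slope $\hat{(\cdot)}$—which is an affine functional on the cone of subharmonic functions of logarithmic growth—yields that $t\mapsto\hat u_t$ is affine on $[0,1]$.

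**The uniqueness step (the main obstacle).** Part (3) is where I expect the real work. The plan is to show that any \emph{other} psh segment $t\mapsto\psi_t$ in $\hat\cE^1(L)$ joining $\phi_0$ to $\phi_1$ along which $t\mapsto\hat u_t$ is affine must coincide with the distinguished segment. The strategy is to use maximality of the psh geodesic: by the construction in Theorem \ref{thm_relsegments} (Bergman-kernel approximation producing the largest psh segment), the distinguished segment $\phi_t$ dominates every competing psh segment $\psi_t$ with the same endpoints, so $\psi_t\le\phi_t$ fibrewise for all $t$. To upgrade this to equality, I would compare Monge–Ampère energies: since $\psi_t\le\phi_t$, monotonicity of $E$ gives $E(\psi_{t,z})\le E(\phi_{t,z})$ fibrewise, hence $\langle\psi_t^{d+1}\rangle\le\langle\phi_t^{d+1}\rangle$ on the base. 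The affineness hypothesis on $t\mapsto\hat u_t$ for \emph{both} segments, combined with matching endpoints at $t=0,1$, forces equality of the generalized slopes for all $t$; a convexity/Lelong-number argument (the generalized slope detects the endpoint behavior and an affine function is determined by its endpoints) then forces $\langle\psi_t^{d+1}\rangle=\langle\phi_t^{d+1}\rangle$, i.e. $E(\psi_{t,z})=E(\phi_{t,z})$ for a.e. $z$. Finally, as in the last step of the proof of Proposition \ref{prop_harmonicitymaximaldiscs}, the combination $\psi_{t,z}\le\phi_{t,z}$ with equal energies forces $\psi_{t,z}=\phi_{t,z}$ fibrewise, hence $\psi_t=\phi_t$. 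The delicate point I anticipate is controlling the passage from "equal generalized slopes" to "equal base metrics everywhere"—this requires care because the generalized slope only sees asymptotic behavior near $z=0$, so I would need to supplement it with the fixed boundary data at $|z|=1$ (which pins down the harmonic part $H$ in the decomposition of Corollary \ref{coro_trivialization}) to rigidify the comparison across the whole disc.
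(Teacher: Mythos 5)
The genuine gap in your proposal is at the very start: you never verify that the segment $t\mapsto\phi_t$ produced by Theorem \ref{thm_relsegments} actually lies in $\hat\cE^1(L)$. Theorem \ref{thm_relsegments} only gives membership in $\cE^1_{X/\pdisc}(L)$ (global plurisubharmonicity and fibrewise finite energy); it gives neither logarithmic growth nor relative maximality. Without these, $\hat d_1(\phi_s,\phi_t)$ is not even defined, and your appeal to Proposition \ref{prop_distancesubharmonic} is circular, since its hypotheses are precisely $\phi_s,\phi_t\in\hat\cE^1(L)$. This verification is the first half of the paper's proof: logarithmic growth follows from the convexity bound $\phi_t\le(1-t)\phi_0+t\phi_1$ together with Lemma \ref{lem_extpshtame}(ii), and relative maximality follows because $\langle\phi_t^{d+1}\rangle_{X/\pdisc}=(1-t)\langle\phi_0^{d+1}\rangle_{X/\pdisc}+t\langle\phi_1^{d+1}\rangle_{X/\pdisc}$ is a convex combination of zero-curvature metrics, hence has zero curvature, so Proposition \ref{prop_harmonicitymaximaldiscs} applies. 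Note that you wrote down exactly this affine-combination identity in your part (2), but only used it to compute slopes; you never drew the maximality conclusion from it, and the logarithmic-growth check is absent altogether.

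Apart from this, your parts (1) and (2) follow the paper's own route (take generalized slopes of the fibrewise geodesic identity, respectively of the fibrewise-affine-energy characterization in Theorem \ref{thm_relsegments}). Your uniqueness step is in fact \emph{more} detailed than the paper's, which disposes of it in one sentence by deferring to the uniqueness clause of Theorem \ref{thm_relsegments}; your chain --- maximality gives $\psi_t\le\phi_t$, affineness of both slope functions with common endpoints gives equal slopes, the decomposition of Corollary \ref{coro_trivialization} with the fixed boundary data upgrades equal slopes to $E(\psi_{t,z})=E(\phi_{t,z})$ for all $z$, and domination plus equal energies gives $\psi_t=\phi_t$ --- is sound and genuinely bridges the difference between ``fibrewise affine energy'' and ``affine slopes.'' One caveat you should make explicit: this argument needs the competing segment to take values in $\hat\cE^1(L)$, since relative maximality is what makes $z\mapsto\langle\psi_t^{d+1}\rangle$ harmonic rather than merely subharmonic; for a general psh segment the energy can dip strictly below the harmonic function with the same boundary values and the same Lelong number at $0$ (e.g. subtracting a multiple of $1-|z|^2$), so ``equal slopes plus equal boundary data'' would not force equality.
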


\begin{proof}[Proof of Theorem \ref{thm_1geodesics}.]
Let $\phi_0$, $\phi_1\in\hat\cE^1(L)$. We consider as in Theorem \ref{thm_relsegments} the family of fibrewise maximal geodesics
$$t\mapsto \phi_{t,z}.$$
To show that it belongs to $\hat\cE^1(L)$, we must make sure that it has logarithmic growth and is relatively maximal. The former is due to Lemma \ref{lem_extpshtame}(ii), since for fixed $x\in X$, $\phi_t(x)\leq (1-t)\phi_0(x)+t\phi_1(x)$ by convexity of maximal segments, so that if there exist $a_i$, $i=0,1$ such that $\phi_i + a_i\log|z|$ are bounded above near the central fibre of some model, then so is $\phi_t + (1-t)a_0+a_1$. Regarding maximality, $\langle \phi_t^{d+1}\rangle_{X/\pdisc}$ is a convex combination of zero curvature metrics with logarithmic growth, hence $\phi_t$ is also relatively maximal by Proposition \ref{prop_harmonicitymaximaldiscs}.

\bsni That $t\mapsto \phi_t$ is $\hat d_1$-geodesic is a consequence of the fact that, for all $z$, $t\mapsto \phi_{t,z}$ is $d_{1,z}$-geodesic. Finally, the statement regarding the Monge-Ampère energy follows upon taking generalized slopes in the statement of Theorem \ref{thm_relsegments}.
\end{proof}

\subsection{Extension of the distance to $\cE^1(L)$.}

In this Section, we construct a "maximal envelope" map, which will allow us to extend the $d_1$-distance as a pseudodistance to all of $\cE^1(L)$.

\begin{prop}\label{prop_envelope}For all $\phi\in\cE^1(L)$, there exists a unique smallest relatively maximal metric $\hat P(\phi)\in\hat\cE^1(L)$ with $\phi\leq\hat P(\phi)$ and
$$d_1(\phi_z,\hat P(\phi)_z)=o\,(\log|z|)$$
as $z\to 0$. This defines a natural projection
$$\hat P:\cE^1(L)\to \hat\cE^1(L).$$
\end{prop}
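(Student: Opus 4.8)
The plan is to realize $\hat P(\phi)$ as the \emph{smallest} relatively maximal metric dominating $\phi$, constructed as an increasing limit of relatively maximal metrics solved over shrinking annuli, and then to verify the asymptotic condition by reducing it to a scalar potential-theoretic statement about least harmonic majorants. The key simplification is that whenever $\phi\leq\psi$ fibrewise one has $P(\phi_z,\psi_z)=\phi_z$, so the $d_1$-distance collapses to an energy difference:
$$d_1(\phi_z,\psi_z)=E(\psi_z)-E(\phi_z)=\tfrac{1}{d+1}\big(\langle\psi_z^{d+1}\rangle-\langle\phi_z^{d+1}\rangle\big).$$
Hence the whole statement is governed by the single function $g:=\langle\phi^{d+1}\rangle_{X/\pdisc}$, which is subharmonic with logarithmic growth (by the curvature formula and Lemma \ref{lem_extpshtame}), together with its least harmonic majorant.

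For the construction, after regularizing to reduce to continuous $\phi$, I would for each small $r>0$ apply Theorem \ref{thm_existencemaximal} on the hyperconvex annulus $\{r<|z|<1\}$ to obtain the unique continuous relatively maximal metric $\Psi_r$ on $\pi\mi(\{r<|z|<1\})$ with boundary data $\phi_\partial$ on $\{|z|=1\}$ and $\phi$ on $\{|z|=r\}$. Since $\phi$ is itself an admissible competitor for the defining envelope, $\Psi_r\geq\phi$; gluing $\Psi_r$ with $\phi$ across $\{|z|=r\}$, where they agree, yields a global psh metric, still denoted $\Psi_r$, with $\Psi_r\geq\phi$. A comparison argument on the overlaps shows that $r\mapsto\Psi_r$ increases as $r\to0$. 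Using logarithmic growth via Lemma \ref{lem_extpshtame} I produce a fixed relatively maximal upper bound $\Theta=\refmetric-c\log|z|+C\in\hat\cE^1(L)$ with $\Theta\geq\phi$ (adding a base-harmonic term and a constant to $\refmetric$ preserves relative maximality); comparison on each annulus gives $\Psi_r\leq\Theta$. Thus the increasing family admits a psh limit $\Psi:=(\sup_r\Psi_r)^*$ squeezed between $\phi$ and $\Theta$, so $\Psi$ has logarithmic growth and fibrewise finite energy. Since each $\langle\Psi_r^{d+1}\rangle$ is harmonic and bounded above, Harnack together with continuity of the Deligne pairing along monotone limits shows $\langle\Psi^{d+1}\rangle$ is harmonic, so $\Psi\in\hat\cE^1(L)$ by Proposition \ref{prop_harmonicitymaximaldiscs}. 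I then set $\hat P(\phi):=\Psi$.

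Minimality and uniqueness follow from the same comparison principle: for any relatively maximal $w\geq\phi$, on each annulus $\{r<|z|<1\}$ one has $w\geq\Psi_r$ on the boundary (equality $\phi_\partial$ on the outer circle, and $w\geq\phi=\Psi_r$ on the inner one), whence $w\geq\Psi_r$ on the annulus, and letting $r\to0$ gives $w\geq\Psi$. Therefore $\Psi$ is the smallest relatively maximal metric lying above $\phi$, which in particular forces uniqueness.

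The remaining asymptotic condition is the main obstacle. By the collapse above, $d_1(\phi_z,\Psi_z)=\tfrac{1}{d+1}(h-g)(z)$, where $h:=\langle\Psi^{d+1}\rangle$ is harmonic, $g=\langle\phi^{d+1}\rangle$ is subharmonic, $h\geq g$, and both carry the boundary data $\langle\phi_\partial^{d+1}\rangle$ on $\{|z|=1\}$. From the construction $h$ is exactly the least harmonic majorant of $g$ with this boundary data, as the annulus solutions $\langle\Psi_r^{d+1}\rangle$ increase to it. It then remains to prove $h-g=o(\log|z|)$, equivalently $\hat h=\hat g$: after subtracting the multiple of $\log|z|$ prescribed by the slope $\hat g$, the resulting function is bounded above near $0$ by Lemma \ref{lem_lelongestimate} and hence extends subharmonically across the puncture, so its least harmonic majorant is the ordinary Dirichlet solution on the full disc, which is bounded near $0$; reinstating the logarithmic term shows that $h$ has the same slope as $g$ and that $h-g$ is in fact bounded. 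I expect this step — identifying the monotone limit of the annulus Dirichlet solutions with the least harmonic majorant and controlling its behaviour across the puncture — to be the delicate part, whereas the gluing, comparison, and finite-energy bookkeeping are routine given Theorem \ref{thm_existencemaximal} and Proposition \ref{prop_harmonicitymaximaldiscs}.
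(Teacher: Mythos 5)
Your overall strategy is the same as the paper's: build $\hat P(\phi)$ as the increasing limit, as $r\to 0$, of the relatively maximal metrics on the annuli $\{r<|z|<1\}$ with boundary data $\phi$ (Theorem \ref{thm_existencemaximal}); get minimality and uniqueness from the comparison principle; collapse $d_1$ to an energy difference using $P(\phi_z,\psi_z)=\phi_z$; and reduce the asymptotic condition to a scalar statement comparing the harmonic limit $h$ of the annulus energies with $g=\langle\phi^{d+1}\rangle_{X/\pdisc}$. Your execution of this last step (removable singularity across the puncture plus least harmonic majorants) differs from the paper's, which instead writes the circle means of the annulus solutions as explicit affine functions of $\log r$ interpolating the means of $g$ and lets the inner radius tend to $0$; both are sound and of comparable difficulty, and both yield equality of generalized slopes.

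The genuine gap is the uniform upper bound for the increasing family $\Psi_r$. You take $\Theta=\refmetric-c\log|z|+C$ and justify its relative maximality by saying that adding a base-harmonic term and a constant to $\refmetric$ preserves relative maximality. That implication is true but vacuous here: $\refmetric$ is a smooth (typically strictly) psh metric, and such metrics are not relatively maximal. Indeed, by the remark following Theorem \ref{thm_existencemaximal}, a continuous relatively maximal metric satisfies $(dd^c\,\cdot\,)^{d+1}=0$ on $X$, which fails for a strictly psh $\refmetric$ (it can hold in special situations, e.g. for a product metric on an isotrivial family, but nothing in your setup guarantees it). Consequently the step ``comparison on each annulus gives $\Psi_r\leq\Theta$'' does not follow: the candidates $\psi$ in the Perron envelope defining $\Psi_r$ satisfy $\psi\leq\Theta$ only on the boundary, and without maximality of $\Theta$ this gives nothing in the interior. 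Since this bound is precisely what guarantees that the increasing limit is not identically $+\infty$ and has logarithmic growth, the step must be repaired; for instance, replace $\phi$ by $\phi+c\log|z|$ (which leaves the boundary data unchanged) so that it extends as a psh metric to a model $(\cX,\cL)$, apply Theorem \ref{thm_existencemaximal} over the full disc to obtain a continuous relatively maximal majorant on $\cL$, or alternatively bound the candidates via the maximum principle applied to the fibrewise suprema $z\mapsto\sup_{X_z}(\psi-\refmetric)$, which are subharmonic up to a uniformly bounded correction. (To be fair, the paper's own proof is essentially silent on this point.)

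Two smaller remarks. First, your opening ``after regularizing to reduce to continuous $\phi$'' hides a real step: the paper devotes its final paragraph to passing the slope identity through a decreasing approximation by monotone convergence, and this does require an argument. Second, your concluding claim that $h-g$ is bounded is too strong: $g$ is merely subharmonic and admits no pointwise lower bound, so $h-g$ can blow up along sequences $z_k\to 0$; what your argument (and the paper's) actually establishes is that $h$ and $g$ have equal generalized slopes, i.e. the $o(\log|z|)$ condition holds in the sense of slopes/circle means, which is the form in which the Proposition is used afterwards.
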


\noindent Before proving this result, we note this immediate Corollary:
\begin{corollary}The mapping
$$\hat d_1(\phi_0,\phi_1)=\hat d_1(\hat P(\phi_0),\hat P(\phi_1))$$
defines a pseudodistance on $\cE^1(L)$.
\end{corollary}

\begin{proof}[Proof of Proposition \ref{prop_envelope}.]
Let $\phi\in\cE^1(L)\cap C^0(L)$, and, for all $r\in (0,1)$, let $U_r$ denote the annulus $\{r < |z| < 1\}\subset \pdisc$, and $V_r=\pi\mi(U_r)\subset X$. Let $\phi_r$ be the relatively maximal metric on $V_r$, coinciding with $\phi$ on $\partial V_r$, given by Theorem \ref{thm_existencemaximal}. Fixing $z$ on the base, the sequence $r\mapsto \phi_{r,z}$ is an increasing sequence of psh metrics in $\cE^1(L_z)$. We claim that the limit family
$$z\mapsto \left(\lim_{r\to 0}{}^* \phi_{r,z}\right)$$
is the desired envelope $\hat P(\phi)$. Denote this limit $\hat \phi$ for the moment. Fix some $r$. By construction, $\hat \phi$ restricted to $V_r$ coincides everywhere with its Perron-Bremmermann envelope; furthermore, it is locally bounded (since it is approximable from below). By the discussion in Section \ref{sect_relmax}, since this holds for all $r$, $\hat \phi$ is relatively maximal. Furthermore, by construction again, it satisfies $\phi\leq\hat\phi$ and is the smallest such relatively maximal metric. We are therefore only left to prove that $d_1(\hat\phi_z,\phi_z)=O(\log|z|)$ as $z\to 0$. As in Corollary \ref{coro_trivialization}, we pick a model $(\cX,\cL)$ of $(X,L)$, and we extend $\langle \phi^{d+1}\rangle$ to the trivializable line bundle $\langle \cL^{d+1} \rangle$. Picking a trivialization $\tau$ allows us to identify the energies $\langle\hat \phi^{d+1}\rangle$ and the $\langle\phi_r^{d+1}\rangle$ with functions $u$ and $u_r$ on $\bbd$ and $U_r$ respectively. By Proposition \ref{prop_harmonicitymaximaldiscs}, those functions are harmonic, and for all $s\in(0,1)$, the functions $u_r$, $r>s$ increase over $\overline{U_s}$ to $u$, which implies the convergence to be uniform (as an increasing sequence of harmonic functions over a compact set). Now, by harmonicity, for $r>s$, the integrals
$$\fint_{|z|=r}u_s(z)\,dz$$
are affine functions of $\log r$. Writing
$$v=-\log|\tau|_{\langle \phi^{d+1}\rangle},$$
we then have
$$\fint_{|z|=r}u_s(z)\,dz=\frac{\log r}{\log s}\fint_{|z|=s}v(z)\,dz+\left(1-\frac{\log r}{\log s}\right)\cdot \fint_{|z|=1}v(z)\,dz,$$
(recall how we have defined $\phi_s$ and $u_s$!). Taking the limit $s\to 0$ using the uniform convergence discussed above yields
$$\fint_{|z|=r}u_s(z)\,dz=-(\log r)\, \hat v + \fint_{|z|=1}v(z)\,dz,$$
where $\hat v$ denotes the generalized slope of the subharmonic function $v$. Taking slopes in this equality, one then finds
$$\hat v = \hat u.$$
Now, since $\phi\leq\hat\phi$, we have
$$d_1(\phi_{z},\hat \phi_z)=u(z)-v(z),$$
whose slopes we have seen to coincide, proving our statement that $d_1(\hat\phi_z,\phi_z)=O(\log|z|)$. Therefore, $\hat \phi$ is our desired envelope $\hat P(\phi)$. Finally, if $\phi$ is not continuous, we extend it to some model $(\cX,\cL)$, and a decreasing approximation by continuous metrics $\phi_i$ on $\cL$ gives a sequence of relatively maximal metrics $\hat \phi_i$ decreasing to some relatively maximal metric $\hat \phi$ which has the desired properties, as we show now: define $u^i$, $u^i_r$ and $v^i$ as above for $\phi_i$, and $u$, $u_r$ and $v$ for $\phi$. By monotonicity of Deligne pairings along decreasing nets, we have that $u^i\to u$, $u^i_r\to u_r$ and $v^i\to v$ decreasingly; we then have for all $r>s\in(0,1)$ and all positive integers $i$ that
$$\fint_{|z|=r}u^i_s(z)\,dz=\frac{\log r}{\log s}\fint_{|z|=s}v^i(z)\,dz+\left(1-\frac{\log r}{\log s}\right)\cdot \fint_{|z|=1}v^i(z)\,dz;$$
furthermore, we may normalize all our sequences so that all the functions involved are nonpositive, thereby allowing us to use monoton convergence and find
$$\fint_{|z|=r}u_s(z)\,dz=\frac{\log r}{\log s}\fint_{|z|=s}v(z)\,dz+\left(1-\frac{\log r}{\log s}\right)\cdot \fint_{|z|=1}v(z)\,dz,$$
so that we may proceed using the same argument as before to show that $d_1(\hat\phi_z,\phi_z)=O(\log|z|)$; that $\hat \phi$ is the smallest relatively maximal metric bounded below by $\phi$ and satisfying this equality follows again by construction, since decreasing limits of relatively maximal metrics over annuli remain relatively maximal.
\end{proof}

\section{The non-Archimedean limit.}\label{sect_sect2}

We move away from relatively maximal and finite-energy metrics for the moment, and focus on the space $\PSH(L)$. The purpose of this Section is to show that there is a natural map from this space to a certain space of non-Archimedean metrics. We describe the non-Archimedean setting in Sections \ref{subsect_21} and \ref{sect_nappt}. We prove some complex preparations in Section \ref{subsect_23}, then describe the construction in Section \ref{subsect_24}. Finally, in Section \ref{subsect_25}, we show how a certain subclass of metrics behaves under this map.

\subsection{Degenerations as varieties over discretely valued fields.}\label{subsect_21}

Dating back to ideas of Berkovich (\cite{berkvanishing}, \cite{berkhodge}), objects such as degenerations and analytic models thereof can be interpreted as varieties over the field $\bbc((t))$ (see also \cite{favre}, \cite{bjtrop}). For clarity, we will from now on write $\K=\bbc((t))$ and $\R=\bbc[[t]]$.

\bsni Pick a degeneration $\pi:X\to\pdisc$ and an analytic model $\pi:\cX\to\disc$ of $X$. As $X$ is projective, it can be embedded in some $\bbp^n\times\bbd$, where it is presented by a finite number of homogeneous polynomials with coefficients in the set of holomorphic functions on $\pdisc$ that are meromorphic at zero. Since this set of functions can be identified with the field $\K$ of complex Laurent series, one can then view $X$ as a variety $X_\K$ over the field $\K$. Similarly, $\cX$ can be presented by finitely many homogeneous polynomials with coefficients in $\cO(\bbd)$, i.e. holomorphic functions over the disc, so that it can be identified with a variety $\cX_\R$ over $\R$.

\begin{example}In the case of an isotrivial degeneration $X\simeq M\times \bbd^*$ for some complex projective manifold $M$, $X$ can be identified with the base change of $M$ to the field $\K$. In particular, there exists a "trivial" algebraic model, defined by taking the base change of $M$ to $\R$, which corresponds to the product analytic family over $\bbd$.
\end{example}

\noindent $\K$ is a (non-Archimedean) valued field, with valuation
$$\nu_0(\Sigma a_i t^i)=\min\{i,\,a_i\neq 0\}.$$
This also defines a valuation on the Noetherian ring $\R$. From the general work of Berkovich (\cite{berko}), one can associate to a scheme $X$ over a valued ring $\R$, in a functorial way, its analytification $X\an$ with respect to the given valuation on the base. The underlying points of this analytification roughly correspond to valuations on the function field $\mathrm{K}(X)$ extending the base valuation on $\mathrm{K}$, and the topology is that of pointwise convergence. %In the case where $\mathrm{S}=\bbc$ with an Archimedean absolute value, we recover the usual analytification of a complex projective variety (\cite{serregaga}). If $\cA$ is an algebra of finite type over $\mathrm{S}$, the points of $(\Spec\,\cA)\an$ correspond to algebra seminorms that are bounded by the given seminorm when restricted to $\mathrm{S}$, endowed with the topology of pointwise convergence.

\bsni In our setting, the Berkovich analytification $X_\K\an$ of $X_\K$ contains an important dense subset: the set of divisorial points $X^{\mathrm{div}}$. It is described as follows. Let $\cX$ be an analytic model of $X$. By Noetherianity and normality, the fibre of $\cX$ over $0$ is then a Cartier divisor which decomposes as the Weil divisor
$$\cX_0=\sum_i a_i E_i,$$
with each $E_i$ irreducible. Each component of such a decomposition defines a valuation $\nu_{E_i}$ on $\K(X)$ as follows: for all $f\in \K(X)$,
$$\nu_{E_i}(f)=\ord_{E_i}(f)/a_i.$$
All divisorial points of $X_\K\an$ are then obtained in this manner.

\subsection{Non-Archimedean plurisubharmonic functions.}\label{sect_nappt}

Let $X$ be a degeneration with a line bundle $L$ on $X$. Let $(\cX,\cL)$ be a model of $(X,L)$. To $\cL$ one can associate a model metric $\phi_\cL$ on $L_\K\an$, as explained in detail in \cite{bfjsemi}. Such a metric is uniquely characterized as follows: given an open set $\mathcal{U}\subset\cX$ and a nonvanishing section of the restriction of $\cL$ to $\mathcal{U}$, then we require that $|s|_{\phi_\cL}=1$ on $(\mathcal{U}_\K\cap X_\K)\an$.

\begin{defi}We say that a model metric $\phi_\cL$ is plurisubharmonic if $\cL$ is nef. Given a metric $\phi$ on $L_\K\an$, we say that it is plurisubharmonic, and we write $\phi\in\PSH(L_\K\an)$ if there exists a sequence of plurisubharmonic model metrics on $L_\K\an$ decreasing to $\phi$.
\end{defi}

\noindent Fixing a psh model metric $\phi_\cL$ on $L_\K\an$, one can identify psh metrics on $L_\K\an$ with "$\cL$-psh" functions on $X_\K\an$, via $\phi \leftrightarrow \phi-\phi_\cL$. We define more generally the set of $L$-psh functions to be the reunions of all $\cL$-psh functions for all nef models $\cL$ of $L$.

\bsni We usually endow the space of $\cL$-psh functions with the topology of pointwise convergence on divisorial points, i.e. $\phi_k\to\phi$ in $\PSH(\cL_\R\an)$ if and only if, for all $\nu\in X^{\mathrm{div}}$, $\phi_k(\nu)\to\phi(\nu)$. We note (\cite{bfjsemi}) that a non-Archimedean psh function is uniquely determined by its values on $X^{\mathrm{div}}$!

\bsni Any vertical ideal sheaf $\mathfrak{a}$ on a model $\cX$ of $X$ defines a function $\log|\mathfrak{a}|$ on $X$, via
$$\log|\mathfrak{a}|(x)=\max \{\log|f(x)|\},$$
where the $f$ run over a set of local generators for $\mathfrak{a}$. (In particular, any vertical Cartier divisor $D$ on a model defines such a function.) We then have the following crucial result:
\begin{lemma}[{\cite{bfjsemi}}]
Let $(\cX,\cL)$ be a model of $(X,L)$. Let $\mathfrak{a}$ be a vertical ideal sheaf on $\cX$, such that $\cL\otimes\mathfrak{a}$ is globally generated. Then, $\phi_\cL + \log|a|$ is a psh metric on $L_\K\an$.
\end{lemma}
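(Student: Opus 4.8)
The plan is to realize $\phi_\cL+\log|\mathfrak{a}|$ as a model metric attached to a \emph{nef} line bundle on a model dominating $(\cX,\cL)$. Since a psh model metric is itself psh (take the constant sequence in the definition), producing such a nef model suffices.

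First I would pass to the normalized blow-up $\mu:\cX'\to\cX$ of $\mathfrak{a}$. As $\mathfrak{a}$ is vertical, $\mu$ is an isomorphism over $\pi\mi(\disc\setminus\{0\})$, so $\cX'$ is again a normal model of $X$; it remains proper over $\Spec{R}$, and flat since it is integral and dominant over the DVR $\R$. On $\cX'$ the inverse-image ideal is invertible, $\mathfrak{a}\cdot\cO_{\cX'}=\cO_{\cX'}(-D)$ for an effective vertical Cartier divisor $D$, and I set $\cL':=\mu^*\cL\otimes\cO_{\cX'}(-D)$. The hypothesis that $\cL\otimes\mathfrak{a}$ is globally generated gives, after pullback, a surjection from $\mu^*(\cL\otimes\mathfrak{a})$ onto $\mu^*\cL\otimes(\mathfrak{a}\cdot\cO_{\cX'})=\cL'$; since global generation passes to quotients, $\cL'$ is globally generated, hence nef.

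It then remains to identify $\phi_{\cL'}$ with $\phi_\cL+\log|\mathfrak{a}|$. Two standard facts are used: model metrics are invariant under pullback along a morphism of models, so $\phi_{\mu^*\cL}=\phi_\cL$ on $L_\K\an$; and twisting by $\cO_{\cX'}(-D)=\mathfrak{a}\cdot\cO_{\cX'}$ shifts the model metric by the function attached to this ideal, namely $\phi_{\cL'}-\phi_{\mu^*\cL}=\log|\mathfrak{a}\cdot\cO_{\cX'}|=\log|\mathfrak{a}|$, the last equality by compatibility of $\log|\cdot|$ with pullback. Combining these, $\phi_{\cL'}=\phi_\cL+\log|\mathfrak{a}|$ is a model metric for the nef bundle $\cL'$, hence psh.

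I expect the main obstacle to be the careful bookkeeping of conventions in the last step — matching the sign of $\cO_{\cX'}(-D)$ against the definition $\log|\mathfrak{a}|=\max\{\log|f|\}$ — together with confirming that the normalized blow-up is a bona fide model: normality holds by construction, but one must check that global generation survives the finite normalization morphism and re-verify properness and flatness over $\R$.
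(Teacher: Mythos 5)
Your proof is correct, and it is essentially the argument of the cited source: the paper itself gives no proof of this lemma (it is quoted from \cite{bfjsemi}), and the standard proof there is exactly your route --- pass to the normalized blow-up of $\mathfrak{a}$, observe that $\mu^*\cL\otimes(\mathfrak{a}\cdot\cO_{\cX'})$ is globally generated hence nef, and identify its model metric with $\phi_\cL+\log|\mathfrak{a}|$. The bookkeeping you flag as the main risk does check out: with the paper's normalization $|s|_{\phi_{\cL'}}=1$ for local frames $s\otimes f_D$ of $\cL'=\mu^*\cL\otimes\cO_{\cX'}(-D)$, one gets $\phi_{\cL'}-\phi_{\mu^*\cL}=\log|f_D|=\log|\mathfrak{a}|\leq 0$, as needed.
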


\subsection{The main result.}

We are now equipped to describe the main construction of this Section. We fix a metric $\phi\in\PSH(L)$. Given any divisorial point $\nu_E$ associated to the component $E$ of a model $\cX$ of $X$, we know that $\phi+a\log|z|$ extends to a metric over $E$ for some $a\in\bbr$. Pick a psh metric $\phi_{E}$ with divisorial singularities of type $E_i$ on $\cX$, i.e. locally of the form 
$$\phi_{E}=\log |f_{E}| + O(1),$$ 
where $f_{E}$ is a local equation for $E$. We can then define a generic (signed) Lelong number
\begin{equation}\label{eq_genericlelong}
\varphi\na(\nu_E)=\ord_E(\phi):=-\sup\{c\geq 0,\,\phi+a\log|z|\leq c\cdot \phi_{E}+O(1)\text{ near E}\}+a.
\end{equation}
By linearity, this is independent of the choice of such an $a$. Performing this construction over all possible $E$ captures the singularities of $\phi$ along all possible models of $\cX$. Our main result for this Section is then the following:
\begin{theorem}\label{thm_namap}
Let $X$ be a degeneration together with a relatively ample line bundle $L$. The Lelong numbers of a metric $\phi\in\PSH(L)$ define a function on $X^{\mathrm{div}}$, which admits a unique $L$-psh extension, giving a map 
$$(\cdot)\na:\PSH(L)\to \PSH(L_\K\an),$$
which is furthermore lower semicontinuous and order-preserving.
\end{theorem}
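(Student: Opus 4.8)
The plan is to first establish that $\phi\na$ is a well-defined function on $X^{\mathrm{div}}$, then realize it as a limit of non-Archimedean psh model functions so as to produce the (necessarily unique) $L$-psh extension, and finally read off monotonicity and lower semicontinuity.

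\textbf{Well-definedness on divisorial points.} A single divisorial point $\nu_E$ is represented by components of the special fibre in many different models, so the first task is to check that $\ord_E(\phi)$ in \eqref{eq_genericlelong} depends only on the valuation $\nu_E$ and not on the chosen model. This is the birational invariance of generic Lelong numbers: given two models dominated by a common $\cZ$, the logarithmic-growth extensions of $\phi$ (Lemma \ref{lem_extpshtame}) and the reference divisorial metrics $\phi_E$ pull back compatibly, and the order of vanishing transforms by exactly the factor relating the two normalizations $\ord_E/a_i$. Finiteness of $\ord_E(\phi)$ for each $E$ follows from logarithmic growth, which bounds $\phi$ above by a smooth model metric up to $O(\log|z|)$, and independence of the auxiliary constant $a$ is the linearity already noted after \eqref{eq_genericlelong}.

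\textbf{The plurisubharmonic extension (the crux).} Fix a model $(\cX,\cL)$ to which $\phi$ extends plurisubharmonically. For each $m$ I would form the multiplier ideal sheaf $\mathcal{J}(m\phi)$ on $\cX$; since $\cL$ is ample over $\disc$, a fixed twist makes $m\cL\otimes\mathcal{J}(m\phi)$ globally generated for $m$ large, so by the Lemma of \cite{bfjsemi} the function
$$\psi_m:=\tfrac1m\big(m\phi_\cL+\log|\mathcal{J}(m\phi)|\big)$$
is a psh model function on $L_\K\an$. Demailly's approximation theorem (\cite{demcurrents}) gives $\tfrac1m\ord_E(\mathcal{J}(m\phi))\to\ord_E(\phi)$ at every divisorial point, hence $\psi_m(\nu_E)\to\phi\na(\nu_E)$ pointwise on $X^{\mathrm{div}}$. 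Subadditivity $\mathcal{J}((m+m')\phi)\supseteq\mathcal{J}(m\phi)\cdot\mathcal{J}(m'\phi)$ makes $m\mapsto m\psi_m$ superadditive, so by Fekete the sequence $\psi_m$ converges increasingly to $\sup_m\psi_m$; the upper semicontinuous regularization of this supremum of psh functions is an $L$-psh function whose divisorial values are precisely $\phi\na$. Uniqueness of the extension is then immediate, since an $L$-psh function is determined by its restriction to $X^{\mathrm{div}}$. I expect this paragraph to be the main obstacle: one must simultaneously arrange global generation of the twisted multiplier ideals relative to the proper morphism $\cX\to\disc$ (using relative ampleness and the Stein base) and control the convergence $\tfrac1m\ord_E(\mathcal{J}(m\phi))\to\ord_E(\phi)$ well enough to identify the regularized supremum with $\phi\na$ on all of $X^{\mathrm{div}}$.

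\textbf{Monotonicity and lower semicontinuity.} Order-preservation is visible already on divisorial points: if $\phi\leq\phi'$, then for a common admissible constant $a$ any $c$ satisfying $\phi'+a\log|z|\leq c\cdot\phi_E+O(1)$ also satisfies the inequality for $\phi$, so $\ord_E(\phi)\leq\ord_E(\phi')$, i.e. $\phi\na\leq\phi'{}\na$ on $X^{\mathrm{div}}$, hence everywhere. Lower semicontinuity is inherited directly from the upper semicontinuity of complex generic Lelong numbers: if $\phi_j\to\phi$ in $L^1_{\mathrm{loc}}$ then $\limsup_j\nu(\phi_j,E)\leq\nu(\phi,E)$, and by the sign in \eqref{eq_genericlelong} this reads $\liminf_j\phi_j\na(\nu_E)\geq\phi\na(\nu_E)$ at each divisorial point, which is exactly lower semicontinuity for the pointwise-on-divisorial-points topology on $\PSH(L_\K\an)$.
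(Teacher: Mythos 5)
Your overall strategy is the paper's: twist multiplier ideals $\cJ(m\phi)$ into globally generated sheaves, obtain psh model functions whose normalized divisorial values converge to $\phi\na$, and conclude. But the crux step contains a genuine error. The subadditivity theorem for multiplier ideals (Demailly--Ein--Lazarsfeld) goes the opposite way from what you wrote: $\cJ((m+m')\phi)\subseteq\cJ(m\phi)\cdot\cJ(m'\phi)$, not $\supseteq$. The containment you state is false in general (already in dimension one: with $\varphi=3\log|z|$ one has $\cJ(\varphi)^2=(z^2)\not\subseteq(z^3)=\cJ(2\varphi)$). Consequently $m\mapsto\log|\cJ(m\phi)|$ is \emph{sub}additive, not superadditive, and the approximants lie \emph{above} the limit, not below: the two-sided estimate the paper quotes (the analogue of \cite[Lemma B.4]{bbj}) reads $m\,\varphi\na(\nu_E)\leq u_m(\nu_E)\leq m\,\varphi\na(\nu_E)+A_\cX(\nu_E)$, so $\psi_m\geq\varphi\na$ and convergence is from above. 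The paper exploits exactly this: $\cJ(2m\phi)\subseteq\cJ(m\phi)^2$ gives $\psi_{2m}\leq\frac{2(m+m_0)}{2m+m_0}\psi_m\leq\psi_m$ (using $\psi_m\leq 0$), so the subsequence $i\mapsto\psi_{2^i}$ is a \emph{decreasing} sequence of psh model functions converging pointwise on $X^{\mathrm{div}}$ to $\varphi\na$, which is therefore psh by definition.

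The sign error matters because it forces you into your second, more serious defect: the appeal to "the usc regularization of a sup of psh functions is psh." That is a complex-analytic tool with no counterpart in the framework used here. In this paper (following Boucksom--Favre--Jonsson), $\PSH(L_\K\an)$ is \emph{defined} as the class of limits of decreasing sequences of nef model metrics; there is no regularization theorem allowing one to pass from an increasing family of model functions to a psh limit, and establishing such envelope statements is a deep matter that the paper never invokes. So even granting your superadditivity claim, your construction would not produce an element of $\PSH(L_\K\an)$ in the required sense. With the correct direction of subadditivity this difficulty disappears, since decreasing limits are psh by fiat. Your first paragraph (well-definedness of $\ord_E(\phi)$ on divisorial points) and your last (order-preservation, and lower semicontinuity from semicontinuity of generic Lelong numbers under $L^1_{\mathrm{loc}}$ convergence) are sound and consistent with the paper, which treats those points as immediate from the construction.
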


\subsection{Some preliminaries.}\label{subsect_23}

We now prove some auxiliary results that will be useful in the proof of Theorem \ref{thm_namap}. We first show that multiplier ideals of psh metrics on $L$ give $L_\K\an$-psh functions.

\begin{lemma}\label{lem_multidealisvertical}
Let $\phi$ be a metric in $\cE^1(L)$. Let $(\cX,\cL)$ be a model of $(X,L)$ such that $\phi$ extends as a psh metric on $\cL$. Then, up to restricting to a slightly smaller disc, for all $m$, the multiplier ideal $$\ideal_m=\cJ(m\phi)$$ is vertical, and there exists an integer $m_0$ (depending only on $\cL$ and not on $m$ or $\phi$) such that $(m+m_0)\cL\otimes \cJ(\phi)$ is globally generated on $\cX$.
\end{lemma}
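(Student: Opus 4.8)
The plan is to handle the two assertions in turn: first the verticality of the multiplier ideals $\cJ(m\phi)$, then their uniform global generation after a fixed twist. Throughout I take the ``slightly smaller disc'' restriction to mean that we replace $\disc$ by a closed disc of radius $1-\varepsilon$, so that $\cX$ becomes a compact projective family over a compact base on which $\cX_0$ is the only special fibre and $\phi$ is an honest psh metric on the ample bundle $\cL$; this is what lets us deploy the projective-geometric vanishing tools below.

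For verticality, the point is that $\phi\in\cE^1(L)$ forces the multiplier ideals to be trivial away from the central fibre. Indeed, the fibrewise restriction $\phi|_{X_z}$ lies in $\cE^1(L_z)$ for every $z\neq 0$, and a finite-energy metric has vanishing Lelong number at every point (a standard fact of fibrewise pluripotential theory). Since restricting a psh weight to a subvariety through a point can only increase its Lelong number, the Lelong number of $\phi$ at any point $x\in X=\cX-\cX_0$ is bounded above by the fibrewise one, hence is zero. By the Skoda--Demailly--Koll\'ar integrability criterion, $\nu(m\phi,x)=m\cdot 0<1$ makes $e^{-2m\phi}$ locally integrable near $x$, so $\cJ(m\phi)_x=\cO_{\cX,x}$ for every $m$ and every $x\in X$. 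Thus $\cJ(m\phi)$ is trivial on $X$, i.e.\ cosupported on $\cX_0$, which is exactly verticality.

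For the global generation I would apply Nadel vanishing together with Castelnuovo--Mumford regularity relative to a fixed ample, globally generated power of $\cL$. Nadel applied to the psh metric $m\phi$ on the ample bundle $m\cL$ gives $H^i(\cX,\cO(K_\cX+m\cL)\otimes\cJ(m\phi))=0$ for $i>0$. To check $0$-regularity of $\cO((m+m_0)\cL)\otimes\cJ(m\phi)$ one must verify vanishing of its $H^i$ after twisting by negative multiples of the reference ample bundle, and each such group has the shape $H^i(\cX,\cO(K_\cX+m\cL+B_i)\otimes\cJ(m\phi))$ where $B_i$ depends only on $\cL$, on $i$, and on $N=\dim\cX=d+1$ — not on $m$ or $\phi$. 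Choosing $m_0$ large, ampleness of $\cL$ makes each $B_i$ globally generated, hence carries a smooth semipositive (pulled-back Fubini--Study) metric which, added to $m\phi$, leaves $\cJ(m\phi)$ unchanged; Nadel then applies and all these groups vanish. Regularity yields global generation of $(m+m_0)\cL\otimes\cJ(m\phi)$, with $m_0$ depending only on $\cL$.

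The main obstacle, and the reason the cohomological route is the right one, is making $m_0$ genuinely uniform in both $m$ and $\phi$: the vanishing statements are insensitive to \emph{which} psh metric $\phi$ we use, only to positivity of its curvature current, and the Castelnuovo--Mumford shifts depend only on $N$ and $\cL$. The one point requiring care is strict positivity of the curvature needed for Nadel, uniformly in $m$: since $\phi$ is merely psh, $dd^c(m\phi)$ need not dominate a fixed K\"ahler form, so I would reserve a fixed positive fraction of the ample class $\cL$ to supply the required strict positivity, which is harmless since it does not alter $\cJ(m\phi)$.
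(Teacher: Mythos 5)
Your proposal follows the paper's proof essentially step for step: verticality via zero Lelong numbers (forced by fibrewise finite energy, with the restriction inequality for Lelong numbers) plus Skoda's integrability criterion, and then uniform global generation via Nadel vanishing combined with Castelnuovo--Mumford regularity, with $m_0$ determined by $\cL$ alone and the strict positivity for Nadel borrowed from a smooth strictly psh reference metric on a fixed multiple of $\cL$ (which leaves $\cJ(m\phi)$ unchanged). The one point where you should adjust the packaging is that $\cX$ is proper over a disc rather than projective, so the Castelnuovo--Mumford step must be run relatively, as the paper does --- choose $V$ relatively very ample, take $m_0$ with $m_0\cL-K_\cX-(d+1)V$ relatively ample, and prove $R^j\pi_*\bigl(((m+m_0)\cL-jV)\otimes\cJ(m\phi)\bigr)=0$ to get $\pi$-global generation --- your absolute $H^i(\cX,\cdot)$ formulation is equivalent to this over the Stein base (Grauert plus Leray) but is not where the criterion directly applies.
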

\begin{proof}
Since $\phi$ in particular has fibrewise finite energy, it has zero Lelong numbers on all fibres. As a consequence, $\phi$ has zero Lelong numbers on all of $\cX-\cX_0$, as Lelong numbers cannot increase upon evaluating them on a larger space. Skoda's integrability theorem (\cite[Theorem 1]{sko72}, see also \cite[Lemma 5.6(a)]{demanalytic}) then yields local $L^1$-integrability of $e^{-\phi}$, which in turn implies local $L^p$-integrability of $e^{-\phi}$ for all $\infty>p\geq 1$, and in particular, for all positive integers $m$, $L^1$-integrability of $e^{-m\phi}$. By \cite[Lemma 5.6(a)]{demanalytic} again, the multiplier ideals satisfy $$\ideal_m{}_{,x}=\cO_\cX{}_{,x}$$ for all $m$ and for all $x$ outside of the central fibre, i.e. $\ideal_m$ is cosupported on the central fibre.

\bsni Now, the global generation statement, follows from a relative equivalent of \cite[Proposition 6.27]{demanalytic}. We can in fact argue just as in \cite[Lemma 5.6]{bbj}: we must prove that there exists $m_0$ such that the sheaf $(m+m_0)\cL\otimes \cJ(\phi)$ is $\pi$-globally generated. By the relative Castelnuovo-Mumford criterion, having picked a relatively very ample line bundle $V$ on $X$ and an $m_0$ such that $m_0\cdot \cL - K_\cX - (d+1)V$ is relatively ample (after possibly restricting to a smaller disc), it is enough to show that for all $j=1,\dots,d$,
$$R^j\pi_*(((m+m_0)\cL-jV)\otimes \cJ(\phi))=0$$
on the disc, which follows from Kodaira and Nadel vanishing.
\end{proof}

\noindent We thus obtain the following:
\begin{corollary}\label{cor_multidealvertical}For any metric $\phi\in\PSH(L)$, and any model $(\cX,\cL)$ of $(X,L)$ such that $\phi$ extends as a psh metric on $\cL$, there exists an integer $m_0$ such that the function
$$(m+m_0)\mi\log |m\cJ(\phi)|$$
is $\cL$-psh for all positive integers $m$.
\end{corollary}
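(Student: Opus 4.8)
The plan is to read the statement off directly from Lemma \ref{lem_multidealisvertical} together with the global generation criterion of \cite{bfjsemi} recalled above, where I interpret $m\cJ(\phi)$ as the multiplier ideal $\ideal_m=\cJ(m\phi)$ (both the notation and the uniformity in $m$ point to this reading). First I would invoke Lemma \ref{lem_multidealisvertical}: after possibly shrinking the disc, each $\ideal_m$ is a vertical ideal sheaf on $\cX$, and there is an integer $m_0$, \emph{independent of} $m$, such that $(m+m_0)\cL\otimes\ideal_m$ is globally generated on $\cX$. The uniformity of $m_0$ in $m$ is exactly what allows the conclusion to hold for all positive integers $m$ at once.

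Next I would apply the criterion of \cite{bfjsemi} with the nef model line bundle $(m+m_0)\cL$ in place of $\cL$ and the vertical ideal $\ideal_m$ in place of $\mathfrak{a}$. Since $(m+m_0)\cL\otimes\ideal_m$ is globally generated, this yields that $\phi_{(m+m_0)\cL}+\log|\ideal_m|$ belongs to $\PSH((m+m_0)L_\K\an)$. Because model metrics are additive in the model line bundle, $\phi_{(m+m_0)\cL}=(m+m_0)\phi_\cL$, so that $\log|\ideal_m|$ is $((m+m_0)\cL)$-psh.

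Finally I would divide by the positive integer $m+m_0$ and invoke homogeneity of the plurisubharmonicity condition under positive scaling: the function $(m+m_0)\mi\log|\ideal_m|$ is then $\cL$-psh, which is the claim.

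I do not expect a genuine obstacle here; the corollary is essentially bookkeeping on top of Lemma \ref{lem_multidealisvertical}. The one point deserving care is the rescaling step, namely that $(m+m_0)\mi$ times an $((m+m_0)\cL)$-psh function is $\cL$-psh. I would justify this by passing through $\bbq$-model metrics and using that $\cL$ is nef if and only if any positive multiple of it is, so that a decreasing net of plurisubharmonic model metrics witnessing the psh-ness of $(m+m_0)\phi_\cL+\log|\ideal_m|$ rescales to a net witnessing the psh-ness of $\phi_\cL+(m+m_0)\mi\log|\ideal_m|$.
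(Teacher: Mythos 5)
Your proposal has a genuine gap at its very first step: you invoke Lemma \ref{lem_multidealisvertical} for an arbitrary $\phi\in\PSH(L)$, but that Lemma is stated (and proved) only for $\phi\in\cE^1(L)$, i.e.\ under the additional hypothesis of fibrewise finite energy, whereas the Corollary deliberately drops that hypothesis. The hypothesis is not cosmetic: the Lemma's proof derives verticality of $\ideal_m=\cJ(m\phi)$ from the fact that fibrewise finite energy forces zero Lelong numbers on every fibre, hence (by Skoda) local integrability of $e^{-m\phi}$ away from the central fibre. For a general metric in $\PSH(L)$ this genuinely fails: in a product family $X=M\times\pdisc$, take $\phi$ to be the pullback of a fixed singular psh metric on $L_M$ with nontrivial multiplier ideal; then $\phi$ has logarithmic growth (it extends to the trivial model), yet $\cJ(m\phi)$ is cosupported on a horizontal subvariety and is not vertical. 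Since the criterion you quote from \cite{bfjsemi} requires the ideal $\mathfrak{a}$ to be vertical, your chain of reasoning cannot start for such $\phi$. Your remaining steps (global generation implies psh-ness of $\phi_{(m+m_0)\cL}+\log|\ideal_m|$, then rescaling by $(m+m_0)\mi$ through $\bbq$-model metrics) are fine, but together they only reprove the case $\phi\in\cE^1(L)$, which the paper dispatches in a single sentence.

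What is missing is precisely the content that makes the Corollary more than the Lemma: the passage from $\cE^1(L)$ to all of $\PSH(L)$. The paper does this by approximation: regularize $\phi$ on $\cL$ by a decreasing sequence of locally bounded (in particular fibrewise finite-energy) psh metrics $\phi_k$; each $\phi_k$ falls under the Lemma, and the crucial uniformity here is that $m_0$ depends only on $\cL$ and \emph{not on the metric}, so one fixed $m_0$ works for every $\phi_k$ simultaneously. (You record uniformity of $m_0$ in $m$, which you do need, but the uniformity in $\phi$ is the one powering this step, and your argument never uses it because you work directly with $\phi$.) Monotonicity of multiplier ideals along the decreasing sequence, $\cJ(m\phi_{k+1})\subseteq\cJ(m\phi_k)$, makes $k\mapsto(m+m_0)\mi\log|\cJ(m\phi_k)|$ a decreasing sequence of $\cL$-psh functions, and $\cL$-psh-ness passes to decreasing limits, giving the claim for $\phi$ itself. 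Without some such limiting argument, the statement for general $\phi\in\PSH(L)$ does not follow from the Lemma.
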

\begin{proof}In the case where $\phi$ also has fibrewise finite energy, this follows from the previous Lemma. In the general case, one can approximate $\phi$ on $\cL$ by a decreasing sequence of (e.g.) locally bounded metrics $\phi_k$. Since the integer $m_0$ depends only on $\cL$, the sequence
$$k\mapsto \tilde\phi_k:=(m+m_0)\mi\log |m\cJ(\phi_k)|$$
is then a sequence of $\cL$-psh functions. Since the sequence $\phi_k$ is decreasing, we have for all $k$ that $\cJ(\phi_{k+1})\subseteq \cJ(\phi_k)$, i.e. the sequence $\tilde\phi_k$ is also decreasing, which implies its limit $(m+m_0)\mi\log |m\cJ(\phi)|$ to be $\cL$-psh, as desired.
\end{proof}

\bsni We conclude our preliminaries by introducing the log discrepancy function
$$A_\cX:\cX^{\mathrm{div}}\to \bbr$$ of a model, which will be used in the proof of Theorem \ref{thm_namap}. Pick a model $\cX$ of $X$, and let $\rho:\cY\to\cX$ be some model dominating $\cX$. Any point in $cX^{\mathrm{div}}$ is a valuation $\nu_E$ associated to a divisor in the central fibre $\cY_0=\sum_i a_i E_i$ of such models, and the log discrepancy function is thus fully characterized via the formula
$$K_{\cY}+\cY_0 = \rho^*(K_\cX + \cX_0) + \sum_i A_\cX(\nu_{E_i})a_i E_i.$$

\subsection{Proof of Theorem \ref{thm_namap}.}\label{subsect_24}

We may now prove Theorem \ref{thm_namap}. %Our construction requires fixing a reference psh metric $\refmetric$ on $L$, making $\phi-\refmetric$ a qpsh function on $X$ (or a psh metric on $\cO_X$) which still has logarithmic growth. In this subsection, we first construct $\phi\na$ on the set $X^{\mathrm{div}}$ of divisorial points of $X_\K\an$.

\begin{proof}We fix a metric $\phi\in\PSH(L)$. We need to show that the function defined on $X^{\mathrm{div}}$ by
$$\phi\na:\nu_E\mapsto \ord_E(\phi),$$
where $\nu_E$ corresponds to a divisorial valuation and $\ord_E$ is defined as a generic Lelong number as in \eqref{eq_genericlelong}, admits a psh extension on $X_\K\an$. Since a non-Archimedean psh function is uniquely defined on the set of divisorial points, it is then though to show that $\phi\na$ can be approximated by a decreasing sequence of psh model functions on $X_\K\an$. Note that, by construction, the map $\phi\mapsto \phi\na$ is lsc and order preserving.

\bsni By Corollary \ref{cor_multidealvertical}, the metric
$$\psi_m=(m+m_0)\mi u_m,$$
where $u_m$ is the model function $\log|\cJ(m\phi)|$, is $\cL_\R\an$-psh. Pick a divisorial point $\nu_E\in X^{\mathrm{div}}$ associated to a component in the central fibre of an analytic model $(\cX,\cL)$ of $(X,L)$.  Using a version of the estimate \cite[Lemma B.4]{bbj} (which is proven exactly as in the trivially valued case), one has
$$m\cdot \varphi\na(\nu_E) \leq u_m(\nu_E) \leq m\cdot \varphi\na(\nu_E) + A_\cX(\nu_E),$$
where $A_\cX$ is the log discrepancy function as before. The sequence $\psi_m$ is therefore a sequence of $\cL_\R\an$-psh functions converging pointwise on $X^{\mathrm{div}}$ to $\varphi\na$. To show that $\varphi\na$ is $\cL_\R\an$-psh, it is then enough to prove that we can have this sequence be decreasing. By subadditivity of multiplier ideals we have $$\cJ(2m\phi)\subseteq \cJ(m\phi)^2,$$
thus
$$\psi_{2m}\leq 2\psi_{m},$$
and as $\phi_{m}\leq 0$,
$$\psi_{2m}\leq \frac{2(m+m_0)}{2m+m_0}\psi_m\leq \psi_m.$$
Picking the subsequence $i\mapsto \psi_{2^i}$ therefore yields a decreasing subsequence converging to $\varphi\na$, as desired. We then set
$$\phi\na:=\varphi\na + \phi_\cL,$$
which concludes our proof.
\end{proof}

\subsection{Locally bounded metrics in the non-Archimedean limit.}\label{subsect_25}

We now begin studying the behaviour under the map $(\cdot)\na$ of the class of metrics $\phi$, such that there exists a model $(\cX,\cL)$ of $(X,L)$ on which $\phi$ admits a locally bounded extension. 

\begin{prop}\label{prop_extension_on_model}
Let $\phi\in\PSH(L)$. Then, 
\begin{enumerate}
\item $\phi$ extends to a psh metric on a model $(\cY,\cM)$ of $(X,L)$ if and only if $\phi\na\leq \phi_\cM$;
\item $\phi$ extends to a locally bounded psh metric on $(\cY,\cM)$ if and only if $\phi\na=\phi_\cM$.
\end{enumerate}
\end{prop}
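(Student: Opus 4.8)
The plan is to translate both geometric conditions — psh extension and locally bounded extension to $(\cY,\cM)$ — into growth conditions near the central fibre $\cY_0$, and then to match these with the non-Archimedean inequalities using the generic Lelong numbers of \eqref{eq_genericlelong}. Fix a smooth metric $\refmetric$ on the complex line bundle $\cM$. By the removable singularity theorem for plurisubharmonic functions across the divisor $\cY_0$, a metric $\phi\in\PSH(L)$ extends to a psh metric on $\cM$ if and only if $\phi-\refmetric$ is bounded above near $\cY_0$, i.e. $\phi\le\refmetric+O(1)$ there; and it extends to a \emph{locally bounded} psh metric exactly when $\phi-\refmetric$ is bounded (above and below) near $\cY_0$. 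Throughout I will use that $(\cdot)\na$ is order-preserving (Theorem \ref{thm_namap}), that it is invariant under adding real constants (immediate from \eqref{eq_genericlelong}), and that $\refmetric\na=\phi_\cM$ since $\refmetric$ is locally bounded on $\cM$.

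For part (1), the direction "$\Rightarrow$" is the easy one: if $\phi$ extends psh to $\cM$ then $\phi\le\refmetric+C$ near $\cY_0$, and by compactness of the fibres away from $0$ this holds globally on $X$ after enlarging $C$; order-preservation and constant-invariance then give $\phi\na\le\refmetric\na=\phi_\cM$. The substantive direction is "$\Leftarrow$". Here I would use the multiplier-ideal model functions $\psi_m=(m+m_0)\mi\log|\cJ(m\phi)|$ which, by Corollary \ref{cor_multidealvertical} and the proof of Theorem \ref{thm_namap}, are $\cL$-psh and decrease to $\varphi\na=\phi\na-\phi_\cL$. The hypothesis $\phi\na\le\phi_\cM$ reads $\varphi\na\le\phi_\cM-\phi_\cL$ on all divisorial points; since $\phi_\cM-\phi_\cL$ is the model function attached to the vertical $\bbq$-divisor separating $\cM$ and $\cL$ on a common model, the valuative criterion for integral closure upgrades this inequality of divisorial values into a containment of the vertical ideals $\cJ(m\phi)$, hence into the complex inequality $\log|\cJ(m\phi)|\le m(\phi_\cM-\phi_\cL)+O(1)$. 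Feeding this into one side of Demailly's equisingular approximation $\phi\le m\mi\log|\cJ(m\phi)|+o(1)$ yields $\phi\le\refmetric+O(1)$ near $\cY_0$, and the removable-singularity theorem finishes the extension.

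For part (2), the upper bound is supplied by part (1). For "$\Rightarrow$", if $\phi$ is locally bounded on $\cM$ then also $\refmetric\le\phi+C$ near $\cY_0$, whence $\phi_\cM=\refmetric\na\le\phi\na$; together with part (1) this gives $\phi\na=\phi_\cM$. The direction "$\Leftarrow$" is the heart of the matter: from $\phi\na=\phi_\cM$ I obtain the upper bound from part (1), and must produce the reverse bound $\refmetric\le\phi+O(1)$, i.e. local boundedness of $v:=\phi-\refmetric$. The equality forces $v$ to have vanishing generic Lelong number along every vertical divisor of every model dominating $\cY$; running the two-sided Demailly approximation of the preceding paragraph in the reverse direction (the model ideals of $\cM$ now sitting inside $\cJ(m\phi)$), I would conclude $\refmetric\le\phi+O(1)$, hence local boundedness.

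The main obstacle is precisely this last step. Because $(\cdot)\na$ is blind to bounded perturbations, one cannot read local boundedness directly off the non-Archimedean equality; the two-sided control must be extracted from the equisingular approximation, and one must rule out unboundedness of $v$ concentrated along vertical subvarieties of higher codimension, which a single model cannot detect. This is where testing the equality against \emph{all} higher models, together with the finite-energy hypotheses on $\phi$ that suppress purely base-directed singularities, are used; making the valuative-criterion bridge between divisorial inequalities and complex ideal containments precise is the technical core of both parts.
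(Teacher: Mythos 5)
Your two forward implications are correct and are, in substance, the paper's own argument: you translate extension into the one-sided bound $\phi\le\refmetric+O(1)$ near $\cY_0$ (resp.\ the two-sided bound), then use that $(\cdot)\na$ is order-preserving, kills bounded terms, and sends a locally bounded metric on $\cM$ to $\phi_\cM$. The paper does the same computation relative to a reference model $(\cX,\cL)$: it passes to a model $\cZ$ dominating both, writes $\pi_\cY^*\cM=\pi_\cX^*\cL+D$, and takes generic Lelong numbers in the inequality $\phi-\psi\le-\log|f_D|+C$.

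For the converse in (1) you follow a genuinely different, and much heavier, route than the paper. The paper's proof is essentially one step: since $\phi\in\PSH(L)$ it extends to some model, and on the common resolution $\cZ$ the hypothesis $\phi\na\le\phi_\cM$ is exactly a lower bound on the generic Lelong numbers of $dd^c\phi$ along the components of $\cZ_0$ by the corresponding coefficients of $-D$; Siu's decomposition then converts this divisorial information into the pointwise bound $\phi-\psi\le-\log|f_D|+O(1)$, because after subtracting the divisorial part the residual positive current has locally bounded-above potentials. This is precisely the bound needed for the extension. Your route via $\cJ(m\phi)$, a valuative criterion, and the Ohsawa--Takegoshi inequality $\phi\le m\mi\log|\cJ(m\phi)|+C/m$ could plausibly be completed, but the step you yourself defer---upgrading $\log|\cJ(m\phi)|(\nu)\le m(\phi_\cM-\phi_\cL)(\nu)+A_\cX(\nu)$, over \emph{all} divisorial $\nu$ and with the unbounded discrepancy term, into an ideal containment and then a pointwise bound---is the entire technical content; as written, this direction is a programme rather than a proof, and the Siu argument renders all of it unnecessary.

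The genuine gap is the converse in (2), and it is not fixable along the lines you indicate. Any mechanism based on multiplier ideals can only bound $\phi$ from \emph{above}: Demailly-type approximation gives $\phi\le m\mi\log|\cJ(m\phi)|+C/m$, and a containment of ideals inside $\cJ(m\phi)$ bounds $\log|\cJ(m\phi)|$ from below, which says nothing about $\phi$ itself, since multiplier ideals---like $\phi\na$---are blind to singularities with vanishing Lelong numbers. Concretely, let $\refmetric$ be a smooth psh metric on $\cM$ and set $\phi:=\refmetric-\pi^*\sqrt{-\log|z|}$ on $X$. This is psh (as $-\sqrt{t}$ is convex in $t=-\log|z|$), lies in $\PSH(L)$ (it is $\le\refmetric$), respects the boundary normalization, and is even fibrewise finite-energy (on each fibre it is $\refmetric_z$ minus a constant); it has $\cJ(m\phi)=\cO_\cY$ for all $m$ and vanishing generic Lelong numbers on every model, hence $\phi\na=\phi_\cM$; yet $\phi-\refmetric\to-\infty$ along $\cY_0$, so $\phi$ admits no locally bounded extension. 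So your final step fails, and no argument of this type can close it: the ``if'' direction of (2) is false as stated for arbitrary $\phi\in\PSH(L)$, and the ``finite-energy hypotheses'' you invoke are neither present in the statement nor sufficient (the counterexample has fibrewise finite energy; what would actually be needed is something like relative maximality or model-type singularities). You flagged this step as the main obstacle, which is the right instinct; but be aware that the paper's own one-line justification of this direction (``uniqueness of the Siu decomposition'') has the same defect, since the residual part of a Siu decomposition need not have locally bounded potential.
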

\begin{proof}
Note that it is equivalent to show the following: given $(\cX,\cL)$ an analytic model of $(X,L)$ and $\psi$ be a reference metric admitting a locally bounded extension to $\cL$,
(1) holds if and only if $\phi\na-\psi\na\leq \phi_\cM - \phi_\cL$, and (2) if and only if we have equality. This will allow us to work at the level of functions and relatively to another model, which is easier.
 
\bsni Assume first $\phi$ to extend to a psh metric on $\cM$. Let $\cZ$ dominate both models via $\pi_\cX:\cZ\to\cX$ and $\pi_\cY:\cZ\to \cY$, and we have $$\pi_\cY^*\cM=\pi_\cX^*\cL+D$$
for a unique Cartier divisor $D$ supported in the special fibre $\cZ_0$. Since $\phi$ extends to a psh metric on $(\cY,\cM)$ if and only if it extends to a psh metric on any model dominating $(\cY,\cM)$, we may without loss of generality focus on $\cZ$. Picking a local equation $f_D$ for the divisor $D$ obtained as above, $\phi$ extends to $\pi_\cY^*\cM$ if $$\phi-\psi\leq -\log|f_D| + C$$ 
near $\cZ_0$. Taking generic Lelong numbers with respect to the underlying divisor of a divisorial point $\nu$ gives
$$\nu(\phi)-\nu(\psi) \geq -\nu(D),$$
i.e. 
$$\phi\na(\nu)-\psi\na(\nu)\leq \phi_\cM(x)-\phi_\cL(x).$$
In the case where $\phi$ admits a locally bounded extension, then there is also a lower bound, which shows by the same argument that $\phi\na=\phi_\cM-\phi_\cL$. The converse is obtained by uniqueness of the Siu decomposition of $\phi$ on $\cX$.
\end{proof}

%\noindent We may now use this result to make the construction canonical. Let $\refmetric$ extend as a locally bounded metric on some $\cL$. Let $\phi\in\cE^1(L)$. We have used $\phi\na$ as notation before to denote the function on $X\an$ associated to $\phi-\refmetric$. Write this now as $(\phi-\refmetric)\na$, and a genuine metric $\phi\na\in\PSH(L\an)$ via 
%$$\phi\na:=\phi_\cL + (\phi-\refmetric)\na.$$
%By the previous Lemma, this is independent of the choice of a reference metric: indeed, let $\psi_{\mathrm{ref}}$ be another reference metric admitting a locally bounded extension to some model $(\cY,\cM)$ of $(X,L)$. then, 
%\begin{align*}
%\phi_\cM + (\phi-\psi_{\mathrm{ref}})\na&=\phi_\cM + (\phi-\refmetric)\na + (\refmetric - \psi_{\mathrm{ref}})\na\\
%&=\phi_\cM + (\phi-\refmetric)\na + \phi_\cL - \phi_\cM = \phi\na.
%\end{align*}
%This is, from now on, what we will refer to as $\phi\na$.

\section{Finite-energy spaces and the Monge-Ampère extension property.}\label{sect_sect3}

\subsection{Non-Archimedean finite-energy metrics.}\label{sect_naenergy}

We begin this Section with some reminders from non-Archimedean pluripotential theory. Let $X$ be a general variety over $\K$ endowed with an ample line bundle $L$. As in complex geometry, one can define Monge-Ampère measures associated to a tuple of $d=\dim X$ continuous non-Archimedean $L$-plurisubharmonic metrics. The general construction relies on intersection pairings (see e.g. \cite{gublerinventionestropical}, \cite{be20}), or Chambert-Loir and Ducros' theory of differential forms on Berkovich spaces (\cite[5, 6]{cld}), building on Lagerberg's theory of differential superforms. We only describe the main results below.

\bsni Given $d$ continuous psh metrics $\phi_1,\dots,\phi_d$ on $L\an$, we have a Radon probability measure
$$\MA(\phi_1,\dots,\phi_d)=V\mi\cdot dd^c\phi_1\wedge\dots\wedge dd^c\phi_d\wedge\delta_X,$$
where $V=(L^d)$. For ease of notation, we will also write
$$\MA(\phi)=V\mi\cdot dd^c\phi\wedge\dots\wedge dd^c\phi\wedge\delta_X=V\mi\cdot (dd^c\phi)^d\wedge\delta_X.$$
Just as in the complex case, one can define the space of finite-energy metrics $\cE^1(L\an)$, having extended the Monge-Ampère energy via decreasing limits again.

\bsni Using the results of \cite{reb20b}, one can also metrize $\cE^1(L\an)$  via setting
$$d_1(\phi_0,\phi_1)=E(\phi_0)+E(\phi_1)-2E(P(\phi_0,\phi_1)),$$
where $P(\phi_0,\phi_1)$ is the envelope
$$P(\phi_0,\phi_1)=\sup\,\{\phi\in \PSH(L),\,\phi\leq\min(\phi_0,\phi_1)\}.$$
This gives $\cE^1(L\an)$ a metric space structure which is furthermore geodesic, and which admits distinguished maximal geodesics characterized by the fact that the energy is affine along them. Finally, much as in the complex setting, one can use non-Archimedean Deligne pairings over a point (\cite{be20}) to realize the relative Monge-Ampère energy between two metrics in $\cE^1(L\an)$:
$$E(\phi_0)-E(\phi_1)=\langle \phi_0^{d+1}\rangle - \langle \phi_1^{d+1}\rangle.$$
Finally, much as in Section \ref{sect_deligne}, we note that we can extend the Deligne pairing construction over a point in the non-Archimedean case, to line bundles metrized by non-Archimedean finite-energy metrics.

\subsection{The Monge-Ampère energy in the non-Archimedean limit.}\label{subsect_32}

In the trivially-valued setting, we have already seen that a metric in $\cE^1(L)$ coincides with a finite-energy psh geodesic ray $t\mapsto \phi_t$. Two natural "asymptotic" energies arise:
\begin{enumerate}
\item the radial limit $\lim_t \frac{E(\phi_t)}{t}$;
\item the non-Archimedean energy of the non-Archimedean metric $\phi\na$ associated to $\phi$.
\end{enumerate}
In \cite{bbj}, it is established that if $\phi$ extends to a locally bounded metric on a test configuration, then those two quantities coincide. This is not the case in general, however. In this Section, we generalize those results to our relatively maximal psh metrics on degenerations. It will be clearer to express this using the relative dimension zero case of the construction from the previous Section.

\begin{remark}[Relative dimension zero and the non-Archimedean limit]\label{rem_namap}As mentioned in Example \ref{exa_reldim2} and Corollary \ref{coro_trivialization}, given a model $(\cX,\cL)$ of $(X,L)$ and a metric $\phi\in\hat\cE^1(L)$, one can identify the Monge-Ampère energy $\langle \phi^{d+1}\rangle_{X/\pdisc}$ of $\phi$ with a function on the punctured disc, by picking a trivialization $\tau$ of $\langle \cL^{d+1}\rangle$ and setting $u=-\log|\tau|_{\phi}$. The function $u$ then has a finite generalized slope (or Lelong number) at zero, but this Lelong number depends on the choice of a trivialization. A nice way of capturing all possible such Lelong numbers is by looking directly at the metric $(\langle \phi^{d+1}\rangle_{X/\pdisc})\na$ on $\langle L_\K^{d+1}\rangle$! The Lelong number of $u$ specifically is then recovered as the difference of Deligne pairings $(\langle \phi^{d+1}\rangle_{X/\pdisc})\na-\langle \phi_\cL^{d+1}\rangle$, where $\phi_\cL$ is the model metric associated to $\cL$ on $L_\K\an$.
\end{remark}
%the following Theorem, which can be understood as a non-Archimedean extension result for "generalized functionals". 
%The "regularizable from below" hypothesis in particular encapsulates the continuous case.

%\begin{defi}A (non-Archimedean or Archimedean) psh metric on a line bundle $L$ is said to be regularizable from below if there exists an increasing net of continuous $L$-psh metrics $k\mapsto\phi_k$ converging to $\phi$. We write $\PSH^\uparrow(L)$ for the space of such metrics. In particular, we have
%$$C^0(L)\cap\PSH(L)\subset \PSH^\uparrow(L) \subset L^\infty(L)\subset \cE^1(L).$$
%Some properties of this space in the non-Archimedean case are studied in \cite{bjkstab}, \cite{reb20a}. We now return to the case where $L$ is a line bundle over a degeneration.
%\end{defi}

\begin{theorem}\label{thm_delignelelong}
For all $\phi\in\cE^1(L)$ admitting a locally bounded extension to some model $(\cX,\cL)$, we have
$$(\langle \phi^{d+1}\rangle_{X/\pdisc})\na=\langle(\phi\na)^{d+1}\rangle,$$
as non-Archimedean metrics on the Deligne pairing $\langle L\an \rangle$ over $\Spec \K$.
\end{theorem}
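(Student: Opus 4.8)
The plan is to reduce both sides to an explicit model metric on $\langle L_\K^{d+1}\rangle$ over $\Spec \K$ and then identify them. The crucial input is Proposition \ref{prop_extension_on_model}(2): since $\phi$ admits a locally bounded extension to $(\cX,\cL)$, we have $\phi\na=\phi_\cL$, a model metric.

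I would first treat the right-hand side. As $\phi\na=\phi_\cL$ is a model metric, the non-Archimedean Deligne pairing $\langle(\phi\na)^{d+1}\rangle=\langle\phi_\cL^{d+1}\rangle$ is computed by intersection theory on the model. By the compatibility of the non-Archimedean Deligne pairing over a point with model metrics (\cite{be20}), $\langle\phi_\cL^{d+1}\rangle$ is exactly the model metric on $\langle L_\K^{d+1}\rangle$ associated to the algebraic Deligne pairing $\langle\cL_\R^{d+1}\rangle_{\cX_\R/\R}$ over $\Spec\R$.

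Next I would treat the left-hand side. Because $\phi$ extends to a locally bounded psh metric on $\cL$ over the whole disc, the complex relative Deligne pairing $\langle\phi^{d+1}\rangle_{X/\pdisc}$ of Theorem \ref{thm_eonepairing} extends to a locally bounded metric on the Deligne pairing line bundle $\langle\cL^{d+1}\rangle_{\cX/\disc}$ over $\disc$: comparing with a smooth reference metric on $\cL$ via the change-of-metric formula \eqref{eq_changeofmetricformula}, the difference is a bounded pushforward by Lemma \ref{lem_finitepairings}. Moreover $\langle\phi^{d+1}\rangle_{X/\pdisc}$ lies in $\PSH(\langle L^{d+1}\rangle_{X/\pdisc})$: approximating $\phi$ on $\cL$ by a decreasing net of continuous psh metrics $\phi_k$, the pairings $\langle\phi_k^{d+1}\rangle_{X/\pdisc}$ decrease to it and each is subharmonic of curvature $\pi_*((dd^c\phi_k)^{d+1})\geq 0$ by \eqref{eq_ddc}, so the limit is subharmonic. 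Applying Proposition \ref{prop_extension_on_model}(2) in relative dimension zero to this locally bounded extension then gives $(\langle\phi^{d+1}\rangle_{X/\pdisc})\na=\phi_{\langle\cL^{d+1}\rangle_{\cX/\disc}}$, the model metric attached to the complex-analytic Deligne pairing over $\disc$.

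It therefore remains to match the two model line bundles, and this is the step I expect to be the main obstacle: the complex-analytic Deligne pairing $\langle\cL^{d+1}\rangle_{\cX/\disc}$ over the disc and the algebraic Deligne pairing $\langle\cL_\R^{d+1}\rangle_{\cX_\R/\R}$ over $\Spec\R$ must correspond under the dictionary of Section \ref{subsect_21} between analytic models over $\disc$ and $\R$-schemes, and induce the same model metric on $\langle L_\K^{d+1}\rangle$. I expect this to follow from the functoriality of the Deligne pairing under base change together with a GAGA-type comparison near the central fibre, so that forming the pairing commutes with analytification/algebraization and with passing to the formal neighbourhood of $0$. Granting this compatibility, both sides coincide with the same model metric, and tracking a trivialization $\tau$ as in Remark \ref{rem_namap} recovers the generalized slope of $-\log|\tau|_{\langle\phi^{d+1}\rangle}$, completing the identification.
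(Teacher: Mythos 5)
Your proposal is correct and follows essentially the same route as the paper: reduce via Proposition \ref{prop_extension_on_model}(2) to $\phi\na=\phi_\cL$, observe that $\langle\phi^{d+1}\rangle_{X/\pdisc}$ is subharmonic by \eqref{eq_ddc} and stays bounded near $0$ because $\phi$ is locally bounded on $\cL$, and conclude that its generalized slope relative to a trivialization of $\langle\cL^{d+1}\rangle_{\cX/\disc}$ vanishes, i.e.\ it induces the model metric. The analytic/algebraic Deligne-pairing compatibility you flag as the main obstacle is not proved in the paper's own argument either — it is absorbed into the identification of Remark \ref{rem_namap} and the reference to \cite{be20} — so your treatment is, if anything, more explicit than the paper's.
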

\begin{proof}
Note that the metric $\langle \phi^{d+1}\rangle_{X/\pdisc}$ is subharmonic by \eqref{eq_ddc}, so that the left-hand side is well-defined (this is the relative dimension zero case of Example \ref{exa_reldim2}).

\bsni We pick a model $(\cX,\cL)$ such that $\phi$ extends to a locally bounded metric on $\cL$. By Proposition \ref{prop_extension_on_model}, we necessarily have $\phi\na=\phi_\cL$, the model metric on $L_\K\an$ associated to $\cL$, so that we are left to show that, given a trivialization $\tau$ of $\langle \cL^{d+1}\rangle_{\cX/\disc}$ and setting $u(z)=-\log|\tau(z)|_{\langle \phi_z^{d+1}\rangle}$, we have
$$\hat u=0$$
(recall how we defined the model metric $\phi_\cL$ in Section \ref{sect_nappt}). But $\phi$ is locally bounded near the central fibre of $\cL$, so that $u$ is locally bounded near zero, which implies $\hat u = 0$ as desired.
\end{proof}

\begin{remark}\label{rem_functionals}
We will occasionally refer to a metric satisfying the statement of Theorem \ref{thm_delignelelong} as satisfying the Monge-Ampère extension property. We also remark that the proof of the Theorem works more generally for arbitrary Deligne pairings(!): given $d+1$ pairs of relatively ample line bundles $L_i$ on $X$ and metrics $\phi_i\in \cE^1(L_i)$ admitting locally bounded extensions to some model of $L_i$, one has
$$(\langle \phi_0,\dots,\phi_d\rangle_{X/\pdisc})\na=\langle\phi\na_0,\dots,\phi\na_d\rangle.$$
The fact that the slopes are well-defined follows as in the proof of the above Theorem from the general property (\ref{eq_ddc}) of Deligne pairings! In Section \ref{subsect_35}, we will show how to extend this result to the class of metrics satisfying the Monge-Ampère extension property.
\end{remark}

%Throughout this section, we make implicit the choices of reference metrics used for the definition of the Monge-Ampère energies (non-Archimedean and complex).

\subsection{Hybrid maximal metrics: existence and uniqueness.}\label{subsect_33}

We now study hybrid maximal metrics. Such metrics can be described as being relatively maximal, but with boundary values prescribed both at the complex boundary of $X$ and at the "asymptotic" or non-Archimedean boundary. We will then see that they correspond exactly to metrics satisfying the Monge-Ampère extension property.
\begin{defi}Let $\phi\in\hat\cE^1(L)$. We say that $\phi$ is hybrid maximal if for any $\psi\in\cE^1(L)$ such that $\psi\na \leq \phi\na$ and $\limsup(\psi-\phi)\leq 0$ near the boundary of $X$, we have $\psi\leq\phi$.
\end{defi}

\begin{remark}
We show how to relate our terminology with that of \cite{bbj}, which deals with special cases of our objects:
\begin{itemize}
\item a geodesic ray in \cite{bbj} is a relatively maximal $\bbc^*$-invariant (logarithmic growth) psh metric on a line bundle over a test configuration in our article;
\item a maximal geodesic ray in \cite{bbj} is  a hybrid maximal $\bbc^*$-invariant (logarithmic growth) psh metric on a line bundle over a test configuration in our article.
\end{itemize}
The "hybrid" refers to (e.g.) the work of Boucksom-Jonsson, in which a hybrid property is a property that passes well from the complex setting to the non-Archimedean limit. Other possible denominations could be "Lelong-maximal" or "maximal in the non-Archimedean limit", but both of those seem to focus more on the limit behaviour while we require our metric to also be maximal in the complex world.
\end{remark}

\begin{theorem}\label{thm_uniquenesstropmaxdisc}
For any $\Phi\na\in \cE^1(L\an)$, there exists a unique metric $\phi\in\hat\cE^1(L)$ such that $\phi\na=\Phi\na$.
\end{theorem}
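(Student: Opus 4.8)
The plan is to prove the two halves separately, with the guiding principle being that a relatively maximal metric of logarithmic growth ought to be completely determined by two pieces of boundary data: its (fixed) complex boundary values $\phi_\partial$ on $\pi\mi(\bbs^1)$, and its ``non-Archimedean boundary values'' $\phi\na$ encoding the singularity at the origin. This is transparent in the relative dimension zero model case of Examples \ref{exa_reldim} and \ref{exa_reldim2}, where a relatively maximal (harmonic) metric is forced to be of the shape $c\cdot\log|z|+H(z)$ with $H$ the fixed Dirichlet solution and $c$ recovered from $\phi\na$. I would set up the general argument so that existence is obtained by a lifting-and-limit construction, and uniqueness by an energy-rigidity argument parallel to the end of the proof of Proposition \ref{prop_harmonicitymaximaldiscs}.

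For existence, I would first approximate $\Phi\na$ from above by a decreasing sequence of plurisubharmonic model metrics $\phi_{\cL_j}$ associated to nef models $(\cX_j,\cL_j)$, arranged so that $\phi_{\cL_j}\to\Phi\na$ for the non-Archimedean $d_1\na$-distance (finite-energy approximation, available from \cite{reb20b}). Each model metric admits a complex lift: applying Theorem \ref{thm_existencemaximal} to the proper family $\cX_j\to\disc$ with boundary data $\phi_\partial$ produces a continuous, relatively maximal psh metric on $\cL_j$, whose restriction $\phi_j$ to $X$ lies in $\hat\cE^1(L)$ and is locally bounded on $\cL_j$; Proposition \ref{prop_extension_on_model}(2) then gives $\phi_j\na=\phi_{\cL_j}$ \emph{exactly}. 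Since each $\phi_j$ is the largest element of $\cE^1(L)$ with non-Archimedean data $\leq\phi_{\cL_j}$ and boundary data $\leq\phi_\partial$, the monotonicity $\phi_{\cL_{j+1}}\leq\phi_{\cL_j}$ forces $\phi_{j+1}\leq\phi_j$, so $\phi:=\lim_j\phi_j$ is a decreasing limit. It is relatively maximal as a decreasing limit of such, and it remains of finite energy because, by the Monge--Amp\`ere extension property for the model-bounded metrics $\phi_j$ (Theorem \ref{thm_delignelelong}), the slopes of the fibrewise energies equal the finite quantities $E\na(\phi_{\cL_j})\to E\na(\Phi\na)$, which via Lemma \ref{lem_unifcontrol} gives uniform fibrewise energy bounds. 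Finally, continuity of the order-preserving, lower semicontinuous map $(\cdot)\na$ along decreasing limits yields $\phi\na=\lim_j\phi_{\cL_j}=\Phi\na$, so $\phi\in\hat\cE^1(L)$ is the desired metric.

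For uniqueness, suppose $\phi'\in\hat\cE^1(L)$ also satisfies $(\phi')\na=\Phi\na$. Because $\phi$ dominates every element of $\cE^1(L)$ whose non-Archimedean data is $\leq\Phi\na$ and whose boundary values are $\leq\phi_\partial$, we get $\phi'\leq\phi$. Both metrics being relatively maximal, Proposition \ref{prop_harmonicitymaximaldiscs} makes the Deligne-pairing energies $\langle\phi^{d+1}\rangle_{X/\pdisc}$ and $\langle(\phi')^{d+1}\rangle_{X/\pdisc}$ harmonic on $\pdisc$; with the common boundary data $\phi_\partial$ they are of the form $(\text{slope})\cdot\log|z|+H$ with the \emph{same} bounded harmonic part $H$. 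Fibrewise monotonicity of the energy under $\phi'\leq\phi$ gives one inequality between their slopes, while Corollary \ref{coro_eonemapping} gives $E(\phi')\na\leq E\na((\phi')\na)=E\na(\Phi\na)=E(\phi)\na$ (the last equality being the Monge--Amp\`ere extension property of the constructed $\phi$), which gives the reverse inequality. Hence the slopes coincide, the two harmonic energies are equal, and so $E(\phi'_z)=E(\phi_z)$ for every $z$. Combined with $\phi'_z\leq\phi_z$ and strict monotonicity of the fibrewise energy (the domination principle used at the end of Proposition \ref{prop_harmonicitymaximaldiscs}), this forces $\phi'_z=\phi_z$ for all $z$, i.e. $\phi'=\phi$.

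The main obstacle I anticipate is purely on the existence side: guaranteeing that the limit $\phi$ carries \emph{exactly} the non-Archimedean data $\Phi\na$ rather than something strictly smaller, and that finite energy survives passage to the limit. Both are resolved by the combination of an exact model-metric lift (Proposition \ref{prop_extension_on_model}(2)) with the Monge--Amp\`ere extension property (Theorem \ref{thm_delignelelong}), which is precisely where the finiteness of the non-Archimedean energy of $\Phi\na$ enters; the continuity of $(\cdot)\na$ along decreasing nets is the remaining technical point to be checked carefully. I note that, although this argument flirts with the full isometry of Theorem \ref{thm_isometry}, it deliberately uses only the one-sided comparison of Corollary \ref{coro_eonemapping} and the energy identity for model-bounded metrics, so as to avoid any circularity.
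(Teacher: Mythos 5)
Your construction follows the same skeleton as the paper's proof (model metrics lifted via Theorem \ref{thm_existencemaximal} and Proposition \ref{prop_extension_on_model}, then a decreasing limit over a model approximation of $\Phi\na$), but it has a genuine gap at precisely the point where the paper does its real work. In the existence step you conclude $\phi\na=\lim_j\phi_{\cL_j}=\Phi\na$ by ``continuity of the order-preserving, lower semicontinuous map $(\cdot)\na$ along decreasing limits.'' No such continuity is available: order-preservation and lower semicontinuity only give $\phi\na\leq\Phi\na$, and the reverse inequality is exactly what can fail, since generic Lelong numbers can jump upward along decreasing limits (already on the disc, $\max(\log|z|,-j)$ decreases to $\log|z|$ while all the Lelong numbers at $0$ vanish along the sequence). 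Ruling out such a jump for the limit $\phi$ is the heart of the matter, and the paper closes it by an energy argument you do not have: harmonicity of the fibrewise energies and the decomposition of Corollary \ref{coro_trivialization}, the identity $\hat E(\phi_j)=E\na(\phi_{\cL_j})-E\na(\phi_\cL)$ from Theorem \ref{thm_delignelelong}, continuity of $E$ and $E\na$ along decreasing nets, a \emph{second} approximation of $\phi\na$ itself by model metrics to get $\hat E(\phi)\leq E\na(\phi\na)-E\na(\phi_\cL)$, and finally the non-Archimedean domination principle \cite[Proposition 6.3.2]{reb20b}, which upgrades ``$\phi\na\leq\Phi\na$ and $E\na(\phi\na)=E\na(\Phi\na)$'' to $\phi\na=\Phi\na$. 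Flagging this as ``the remaining technical point'' is not enough; it is the proof.

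The uniqueness argument is also flawed, in two ways. First, it is circular: Corollary \ref{coro_eonemapping} is deduced in the paper \emph{from} Theorem \ref{thm_uniquenesstropmaxdisc} (its proof begins by invoking the hybrid maximal lift), so you cannot quote it here unless you re-derive the inequality from the model case alone. Second, and more seriously, the two inequalities you combine point the same way: monotonicity under $\phi'\leq\phi$ gives $E(\phi')\na\leq E(\phi)\na$, and your chain $E(\phi')\na\leq E\na((\phi')\na)=E\na(\Phi\na)=E(\phi)\na$ gives $E(\phi')\na\leq E(\phi)\na$ again, not the reverse; so equality of the slopes does not follow. The missing inequality $E(\phi')\na\geq E\na((\phi')\na)$ is exactly the statement that $\phi'$ is hybrid maximal, which you have not assumed -- and cannot: a relatively maximal metric in $\hat\cE^1(L)$ that is \emph{not} hybrid maximal shares its non-Archimedean data with its hybrid maximal envelope while differing from it, which is why the paper only proves (and only needs, cf.\ Theorem \ref{thm_isometry}) uniqueness within the class of hybrid maximal metrics, via the extremal characterization: if $\phi'$ and $\phi$ are both hybrid maximal with the same non-Archimedean data and boundary data, each dominates the other. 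You should restrict the uniqueness claim to that class and prove it by this one-line extremality argument instead.
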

\begin{proof}
The proof is in two parts. We begin with the assumption that the non-Archimedean metric is a model metric, and construct the unique solution via adapted envelope techniques (inspired by \cite[Proposition 2.7]{bermank}). Then, for the general case, we use properties of the Monge-Ampère energy.

\bsni \textit{First step: the model case.} Assume thus $\Phi\na$ to be a model metric corresponding to a model $(\cX,\cL)$ of $(X,L)$. Denote by $\phi$ the "Perron-Bremmermann-Lelong" envelope defined as the supremum of all metrics $\psi\in\cE^1(L)$ with $$\lim_{z\to\xi}\psi(z)\leq \phi_z$$
for all $\xi\in \partial X$, and
$$\psi\na\leq\phi\na.$$
We begin with a claim that $\phi$ so defined belongs to $\cE^1(L)$. Note that if we can show that it is plurisubharmonic, then it necessarily has logarithmic growth, as the supremum of a family of metrics with logarithmic growth, and it is by definition relatively maximal. Furthermore, the fibrewise finite-energy condition will also immediately follow, so that we need to focus on the plurisubharmonicity. Let $\psi$ be in the class of contributions to the supremum above. The hypothesis that $\psi\na\leq \Phi\na$ implies via Proposition  \ref{prop_extension_on_model} that $\psi$ extends with at worst analytic singularities as a psh metric on $\cL$. We therefore see $\phi$ to be the restriction of a metric $\phi_\cX$ defined as the supremum of all metrics on $\cL$, with the same boundary conditions as above on $\partial \cX$, and extending with at worst analytic singularities over the central fibre of $\cX$. That the envelope satisfies our claim is then a particular case of Theorem \ref{thm_existencemaximal} (which allows singular fibres!).

\bsni Finally, the second case of Proposition \ref{prop_extension_on_model} together with the non-Archimedean maximality assumption ensure that it is hybrid maximal, provided we can show that for any model metric $\Phi\na$ there exists a metric $\psi\na\in\cE^1(L)$ with $\psi\na=\Phi\na$. But this also follows from the same Lemma, since one only has to choose $\psi$ to be a psh metric with a locally bounded extension to $\cL$. That $\phi$ is the unique hybrid metric given our data follows again from the extremal characterization.

\bsni \textit{Second step: the general case.} The general case again proceeds by approximation: we pick a sequence of model metrics $\phi_i\na$ decreasing to $\Phi\na$, and their associated hybrid maximal metrics $\phi_i$ in $\hat\cE^1(L)$, which exist and are unique by the first part of the proof. The $\phi_i$ then give a decreasing sequence of metrics by maximality. We write $\phi$ for their limit and $\phi\na$ the non-Archimedean metric it defines. Since the mappings $\phi\mapsto \phi\na$ are order-preserving, we find
$$\phi\na\leq\phi_k\na$$
for all $k$, i.e.
\begin{equation}\label{eq_harm1}
\phi\na\leq\Phi\na.
\end{equation}
Fix a model $(\cX,\cL)$ of $(X,L)$, so that $\phi$ and the $\phi_k$ extend to $\cL$ (with singularities). Fix a trivialization $\tau$ of $\langle \cL^{d+1}\rangle_{\cX/\disc}$, and set
$$E(\phi_z):=-\log|\tau(z)|_{\langle \phi_z^{d+1}\rangle},$$
$$E(\phi_{k,z}):=-\log|\tau(z)|_{\langle \phi_{k,z}^{d+1}\rangle}.$$
We will also denote as usual by $E\na(\phi\na)$ the metric $\langle (\phi\na)^{d+1}\rangle$. Now, by Corollary \ref{coro_trivialization}, and Theorem \ref{thm_delignelelong}, we have for all $k$
\begin{equation}\label{eq_harm2}
E(\phi_{k,z})=c_k\cdot \log|z| + H(z),
\end{equation}
where $H=H(\phi_\partial)$ is some function bounded near zero and independent of $k$. In fact, one can see that
$$c_k=-(\langle (\phi_k\na)^{d+1}\rangle - \langle \phi_\cL^{d+1}\rangle)=-(E\na(\phi_k\na)-E\na(\phi_\cL)).$$
Since the (Archimedean and non-Archimedean) Monge-Ampère energies are continuous along decreasing nets, we have
$$E\na(\phi_k\na)\to E\na(\phi\na)$$
while
$$E(\phi_{k,z})\to E(\tilde\phi_z)$$
for all $z$. Combining those with (\ref{eq_harm2}), one finds
\begin{equation}\label{eq_harm3}
E(\phi_z)=-(E\na(\phi_k\na)-E\na(\phi_\cL))\cdot \log|z| + H(z),
\end{equation}
which by Corollary \ref{coro_trivialization} shows that $\phi$ is a relatively maximal metric. Furthermore, we know that $\PSH(L)$ is closed under decreasing limits: $\phi$ thus has logarithmic growth. To establish existence, i.e. to show that $\phi$ is our desired solution, we now only have to show that $\phi\na=\Phi\na$. Using \cite[Proposition 6.3.2]{reb20b}, this is proven provided we can show that
\begin{equation}\label{eq_harm4}
E\na(\phi\na)=E\na(\Phi\na)
\end{equation}
by (\ref{eq_harm1}). One inequality is immediate from the same equation (\ref{eq_harm1}) and monotonicity of $E\na$:
$$E\na(\phi\na)\leq E\na(\Phi\na).$$
From (\ref{eq_harm3}) we have
\begin{equation}\label{eq_harm5}
\hat E(\phi)=E\na(\Phi\na)-E\na(\phi_\cL),
\end{equation}
so that we have the other inequality (hence (\ref{eq_harm4})), provided we can show that
\begin{equation}\label{eq_harm6}
\hat E(\phi)\leq E\na(\phi\na)-E\na(\phi_\cL).
\end{equation}
This inequality follows from a similar argument. Let $\psi\na_k$ be a decreasing sequence of model metrics approximating $\phi\na$. Let $\psi_k$ denote their associated hybrid maximal metric, and define $E(\psi_k)$ as before. Now, since for all $k$ $\phi\na\leq \psi\na_k$, by maximality, we have
\begin{equation}
\phi\leq \psi_k
\end{equation}
whence
\begin{equation}
E(\phi_z)\leq E(\psi_{k,z}).
\end{equation}
Taking negative Lelong numbers,
\begin{equation}
\hat E(\phi)\leq \hat E(\psi_{k}).
\end{equation}
By Theorem \ref{thm_delignelelong} and the arguments above, $\hat E(\psi_{k})=E\na(\psi_k\na)-E\na(\phi_\cL)$ which again upon taking the decreasing limit in the right-hand side (along which $E\na$ is continuous) establishes
\begin{equation}
\hat E(\phi)\leq \lim_k E\na(\psi\na_{k}))-E\na(\phi_\cL)=E\na(\phi\na)-E\na(\phi_\cL)
\end{equation}
by definition of the $\psi_k\na$. This establishes (\ref{eq_harm6}) as desired, hence existence of a hybrid maximal metric with non-Archimedean metric equal to $\Phi\na$. That such a segment is unique is then a consequence of the extremal definition of hybrid maximality.
\end{proof}

\begin{corollary}\label{coro_eonemapping}
The space $\cE^1(L)$ is mapped by $(\cdot)\na$ to $\cE^1(L_\K\an)$; furthermore, for any $\phi\in\cE^1(L)$, we have
$$(\langle \phi^{d+1}\rangle_{X/\pdisc})\na\leq \langle (\phi\na)^{d+1}\rangle.$$
In other words, we do not have non-Archimedean extension of the Monge-Ampère energy in $\cE^1(L)$, but simply an inequality.
\end{corollary}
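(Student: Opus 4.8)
The plan is to realize $\phi\na$ as a decreasing limit of model metrics, each arising as the non-Archimedean datum of a hybrid maximal metric, and to compare $\phi$ against these hybrid companions. First I would invoke Theorem \ref{thm_namap} to know $\phi\na\in\PSH(L_\K\an)$, and choose a decreasing sequence $\Theta_k$ of psh model metrics with $\Theta_k\downarrow\phi\na$. Each $\Theta_k$ lies in $\cE^1(L_\K\an)$, so Theorem \ref{thm_uniquenesstropmaxdisc} furnishes a unique hybrid maximal metric $\theta_k\in\hat\cE^1(L)$ with $\theta_k\na=\Theta_k$. The crucial comparison is the following: since $\phi\na\leq\Theta_k=\theta_k\na$ and since $\phi$, $\theta_k$ share the fixed boundary data $\phi_\partial$ on $\pi\mi(\bbs^1)$, the very definition of hybrid maximality of $\theta_k$ applies with $\psi=\phi$ and yields $\phi\leq\theta_k$.

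Next I would push this through the Monge-Ampère energy. Fixing a model $(\cX,\cL)$ of $(X,L)$ and a trivialization of $\langle\cL^{d+1}\rangle_{\cX/\disc}$, the inequality $\phi\leq\theta_k$ gives $E(\phi_z)\leq E(\theta_{k,z})$ pointwise on the punctured disc by monotonicity of the fibrewise energy; taking generalized slopes, which are order-preserving by the supremum-over-circles definition of Section \ref{sect_lelong}, yields $\hat E(\phi)\leq\hat E(\theta_k)$. By Remark \ref{rem_namap} the left-hand side equals $(\langle\phi^{d+1}\rangle_{X/\pdisc})\na-E\na(\phi_\cL)$, while, $\theta_k$ being hybrid maximal (and hence, as in the model case of Theorem \ref{thm_uniquenesstropmaxdisc}, admitting a locally bounded extension), the Monge-Ampère extension property (Theorem \ref{thm_delignelelong}, see also equation \eqref{eq_harm5}) gives $\hat E(\theta_k)=E\na(\theta_k\na)-E\na(\phi_\cL)$. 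Cancelling $E\na(\phi_\cL)$ leaves
$$(\langle\phi^{d+1}\rangle_{X/\pdisc})\na\leq E\na(\theta_k\na).$$

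Finally I would extract both assertions from this bound. Its left-hand side is a finite generalized Lelong number, since $\langle\phi^{d+1}\rangle_{X/\pdisc}$ is subharmonic of logarithmic growth ($\phi$ itself having logarithmic growth); hence $E\na(\theta_k\na)=E\na(\Theta_k)$ is bounded below by a fixed finite constant for all $k$, and as $E\na$ is monotone and continuous along the decreasing sequence $\Theta_k\downarrow\phi\na$ its limit $E\na(\phi\na)=\langle(\phi\na)^{d+1}\rangle$ is finite, which proves $\phi\na\in\cE^1(L_\K\an)$. Passing to the limit in $k$ in the displayed inequality then yields $(\langle\phi^{d+1}\rangle_{X/\pdisc})\na\leq\langle(\phi\na)^{d+1}\rangle$, as claimed. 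I expect the difficulty to be bookkeeping rather than conceptual: one must keep the sign conventions of the generalized slope and of the non-Archimedean energy consistent, and verify at the key step that matching complex boundary data together with the single inequality $\phi\na\leq\theta_k\na$ suffices to trigger hybrid maximality. The resulting inequality is in general strict, which is precisely the failure of $\phi$ to be relatively maximal in the complex direction, whereas equality is recovered exactly for the hybrid maximal companions $\theta_k$.
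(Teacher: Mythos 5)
Your proof is correct, and it rests on the same two pillars as the paper's own argument: a comparison of $\phi$ with a hybrid maximal companion (obtained from the definition of hybrid maximality, using the shared boundary data and the inequality of non-Archimedean data), followed by monotonicity of energies/slopes together with the Monge--Ampère extension property of Theorem \ref{thm_delignelelong}. The difference is structural, and worth recording. The paper applies Theorem \ref{thm_uniquenesstropmaxdisc} \emph{directly} to $\phi\na$, producing a single hybrid maximal metric $\Phi$ with $\Phi\na=\phi\na$, deduces $\phi\leq\Phi$, and concludes by monotonicity of $(\cdot)\na$ applied to $\langle\phi^{d+1}\rangle_{X/\pdisc}\leq\langle\Phi^{d+1}\rangle_{X/\pdisc}$; the first assertion (finiteness of the non-Archimedean energy) is then dispatched in one line via the logarithmic growth of $\phi$. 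Strictly speaking, that direct invocation of Theorem \ref{thm_uniquenesstropmaxdisc} presupposes $\phi\na\in\cE^1(L_\K\an)$, which is part of what the corollary is asserting. Your detour through a decreasing sequence of model metrics $\Theta_k\downarrow\phi\na$ (available by Theorem \ref{thm_namap} and the definition of $\PSH(L_\K\an)$) sidesteps this: the existence theorem is only ever invoked for model metrics, where in effect only its first ``model case'' step is needed, and Proposition \ref{prop_extension_on_model} guarantees that each companion $\theta_k$ admits a locally bounded extension, so Theorem \ref{thm_delignelelong} applies to it legitimately; finiteness of $E\na(\phi\na)$ then comes out as a \emph{consequence} of the uniform lower bound $(\langle\phi^{d+1}\rangle_{X/\pdisc})\na\leq E\na(\Theta_k)$ together with continuity of $E\na$ along decreasing nets, rather than being assumed. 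In short, your argument buys logical self-containedness (and quietly repairs a mild circularity in the paper's phrasing) at the cost of an extra approximation step, while the paper's version buys brevity; the two coincide in the limit, since your $\theta_k$ decrease precisely to the paper's $\Phi$. Your bookkeeping of the slope conventions (working relative to $E\na(\phi_\cL)$ via Remark \ref{rem_namap} and cancelling it on both sides) is also consistent with the paper's.
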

\begin{proof}
We start by picking a metric $\phi\in\cE^1(L)$, and we define $\Phi$ to be the hybrid maximal metric with $\Phi\na=\phi\na$ obtained from Theorem \ref{thm_uniquenesstropmaxdisc}. Then, since $\Phi$ is relatively maximal, $\phi\leq \Phi$, so that by monotonicity of $\phi\mapsto\phi\na$,
$$(\langle \phi^{d+1}\rangle_{X/\pdisc})\na\leq (\langle \Phi^{d+1}\rangle_{X/\pdisc})\na$$
while $(\langle \Phi^{d+1}\rangle_{X/\pdisc})\na=\langle(\Phi\na)^{d+1}\rangle$ by hybrid maximality, proving our inequality. To prove the first statement, it is enough to notice that the logarithmic growth condition built into $\cE^1(L)$ forces $(\langle \phi^{d+1}\rangle_{X/\pdisc})\na$ to be finite.
\end{proof}

\subsection{The isometric embedding.}\label{subsect_34}

We denote by $$\hat\cE^1_{\mathrm{hyb}}(L)$$ the subspace of hybrid maximal metrics in $\hat\cE^1(L)$. Our main Theorem is the following:
\begin{theorem}\label{thm_isometry}
The inverse of the mapping $(\cdot)\mapsto(\cdot)\na$ given by Theorem \ref{thm_uniquenesstropmaxdisc} is an isometric embedding of $(\cE^1(L\an),d_1\na)$ into $(\hat\cE^1(L),\hat d_1)$ with image $\hat\cE^1_{\mathrm{hyb}}(L)$. Furthermore, a psh segment in $\cE^1(L\an)$ is a psh geodesic if and only if its image is a psh geodesic.
\end{theorem}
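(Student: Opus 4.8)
By Theorem \ref{thm_uniquenesstropmaxdisc} the assignment $\Phi\na\mapsto\phi$ is a bijection from $\cE^1(L\an)$ onto the set of metrics in $\hat\cE^1(L)$ satisfying the Monge-Ampère extension property; write $\Psi$ for this inverse of $(\cdot)\na$. It is order-preserving, the associated hybrid maximal metric being a Perron--Bremermann--Lelong envelope monotone in its non-Archimedean ceiling. By Corollary \ref{coro_eonemapping} every hybrid maximal $\phi$ has $\phi\na\in\cE^1(L\an)$ with $\Psi(\phi\na)=\phi$, so the image of $\Psi$ is exactly $\hat\cE^1_{\mathrm{hyb}}(L)$. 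To prove $\Psi$ isometric, fix $\Phi_0\na,\Phi_1\na\in\cE^1(L\an)$, put $\phi_i=\Psi(\Phi_i\na)$, and expand both distances. Since $z\mapsto d_1(\phi_{0,z},\phi_{1,z})$ is a genuine function of logarithmic growth (Proposition \ref{prop_distancesubharmonic}) and the generalized slope is additive when computed with circle averages (cf. the Remark following its definition), one has
$$\hat d_1(\phi_0,\phi_1)=\hat E(\phi_0)+\hat E(\phi_1)-2\,\hat E(P(\phi_0,\phi_1)),$$
with $P(\phi_0,\phi_1)$ the fibrewise rooftop envelope. Hybrid maximality (Theorem \ref{thm_delignelelong}, Remark \ref{rem_namap}) and \eqref{eq_harm5} give $\hat E(\phi_i)=E\na(\Phi_i\na)-E\na(\phi_\cL)$. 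Comparing with $d_1\na(\Phi_0\na,\Phi_1\na)=E\na(\Phi_0\na)+E\na(\Phi_1\na)-2E\na\!\left(P(\Phi_0\na,\Phi_1\na)\right)$, the terms $E\na(\phi_\cL)$ will cancel once we establish the single identity
$$\hat E(P(\phi_0,\phi_1))=E\na\!\left(P(\Phi_0\na,\Phi_1\na)\right)-E\na(\phi_\cL).$$

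\textbf{The key lemma.} I would prove this by matching the two rooftops through the correspondence:
$$\hat P\!\left(P(\phi_0,\phi_1)\right)=\Psi\!\left(P(\Phi_0\na,\Phi_1\na)\right),$$
where $\hat P$ is the maximal-envelope projection of Proposition \ref{prop_envelope}. The inequality $\leq$ is soft: from $P(\Phi_0\na,\Phi_1\na)\leq\Phi_i\na$ and order-preservation of $\Psi$ we get $\Psi(P(\Phi_0\na,\Phi_1\na))\leq\phi_i$, hence $\Psi(P(\Phi_0\na,\Phi_1\na))\leq P(\phi_0,\phi_1)\leq\hat P(P(\phi_0,\phi_1))$. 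For the reverse inequality I first note $\hat P(P(\phi_0,\phi_1))\leq\phi_i$: in the annulus construction of Proposition \ref{prop_envelope} each relatively maximal extension $P(\phi_0,\phi_1)_r$ agrees with $P(\phi_0,\phi_1)\leq\phi_i$ on $\partial V_r$, and $\phi_i$ is relatively maximal, so the comparison principle yields $P(\phi_0,\phi_1)_r\leq\phi_i$ on $V_r$; letting $r\to0$ proves the claim. Order-preservation then gives $\hat P(P(\phi_0,\phi_1))\na\leq\phi_i\na$ for $i=0,1$, whence $\hat P(P(\phi_0,\phi_1))\na\leq P(\Phi_0\na,\Phi_1\na)$. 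Since $\hat P(P(\phi_0,\phi_1))$ lies in $\hat\cE^1(L)$ with the prescribed complex boundary data, the extremal characterization of $\Psi(P(\Phi_0\na,\Phi_1\na))$ forces $\hat P(P(\phi_0,\phi_1))\leq\Psi(P(\Phi_0\na,\Phi_1\na))$, giving equality. Proposition \ref{prop_envelope} then ensures $\hat E(P(\phi_0,\phi_1))=\hat E(\hat P(P(\phi_0,\phi_1)))$, and as $\hat P(P(\phi_0,\phi_1))=\Psi(P(\Phi_0\na,\Phi_1\na))$ is hybrid maximal, \eqref{eq_harm5} identifies this slope with $E\na(P(\Phi_0\na,\Phi_1\na))-E\na(\phi_\cL)$, completing the isometry.

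\textbf{Main obstacle and geodesics.} The heart of the argument is the rooftop identity, and within it the comparison $\hat P(P(\phi_0,\phi_1))\leq\phi_i$ coupled with the extremal characterization of $\Psi$; one must also be careful to invoke additivity of the generalized slope only in its circle-average form. For the geodesic statement I would argue through affineness of energy. A psh segment $t\mapsto\Phi_t\na$ in $\cE^1(L\an)$ is a psh geodesic exactly when $t\mapsto E\na(\Phi_t\na)$ is affine \cite{reb20b}; its image $t\mapsto\phi_t=\Psi(\Phi_t\na)$ is a psh segment (the correspondence being order-preserving and compatible with plurisubharmonicity in $t$ through the lsc, functorial construction of Theorem \ref{thm_namap} and the envelopes defining $\Psi$), consisting of hybrid maximal metrics, so \eqref{eq_harm5} gives $\hat E(\phi_t)=E\na(\Phi_t\na)-E\na(\phi_\cL)$. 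Hence $t\mapsto E\na(\Phi_t\na)$ is affine if and only if $t\mapsto\hat E(\phi_t)$ is affine, which by Theorem \ref{thm_1geodesics} and Corollary \ref{coro_trivialization} is precisely the condition for $t\mapsto\phi_t$ to be the psh geodesic joining its endpoints. This yields the equivalence in both directions.
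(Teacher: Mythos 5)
Your proposal follows, in substance, the same route as the paper: reduce the isometry to the single identity equating the slope of the energy of the fibrewise rooftop with the non-Archimedean energy of the non-Archimedean rooftop; produce a relatively maximal metric from the rooftop boundary data on annuli $\{r<|z|<1\}$ and let $r\to 0$; identify the limit with the hybrid maximal metric $\Psi(P(\Phi_0\na,\Phi_1\na))$ by a comparison argument in one direction and the extremal (hybrid-maximality) characterization in the other; and treat geodesics via affineness of energies, exactly as the paper does. However, there is a genuine flaw in how you invoke Proposition \ref{prop_envelope}. That proposition is stated for inputs in $\cE^1(L)$, i.e.\ metrics that are plurisubharmonic \emph{on the total space}, and the two properties you quote from it --- the inequality $\phi\leq\hat P(\phi)$ and the preservation of the energy slope --- both rest on the input being a global psh subsolution, so that the annulus extensions increase from it. The fibrewise rooftop $P(\phi_0,\phi_1)$ is only fibrewise psh; it is not psh on $X$ in general (if it were, the annulus comparison would force it to be relatively maximal, making $z\mapsto d_1(\phi_{0,z},\phi_{1,z})$ harmonic, which fails in general). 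For this input the monotonicity is \emph{reversed}: each annulus extension $\psi_r$ satisfies $\psi_r\leq\phi_0,\phi_1$ by comparison with the relatively maximal $\phi_i$, hence, being fibrewise psh, $\psi_r\leq P(\phi_0,\phi_1)$ fibrewise, and the $\psi_r$ \emph{decrease} to the limit. So your inequality $P(\phi_0,\phi_1)\leq\hat P(P(\phi_0,\phi_1))$ is false as written.

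Both places where this enters can be repaired along the paper's lines. For the soft direction, drop the middle term: $\Psi(P(\Phi_0\na,\Phi_1\na))$ is globally psh, lies below $\phi_0,\phi_1$, hence has boundary values below the rooftop on each $\partial V_r$; relative maximality of $\psi_r$ then gives $\Psi(P(\Phi_0\na,\Phi_1\na))\leq\psi_r$ for every $r$, hence $\leq\lim_r\psi_r$. (The paper phrases this by first proving that $\lim_r\psi_r$ equals the supremum of all globally psh metrics below $\phi_0,\phi_1$ and then exhibiting $\Psi(P(\Phi_0\na,\Phi_1\na))$ as a candidate.) For the slope identity $\hat E(P(\phi_0,\phi_1))=\hat E(\lim_r\psi_r)$, you cannot cite Proposition \ref{prop_envelope}; it must be re-derived for this input, whose energy is \emph{superharmonic} rather than subharmonic (Proposition \ref{prop_distancesubharmonic}). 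This is done exactly as in the proof of that proposition: $\langle\psi_{r,z}^{d+1}\rangle=\langle P(\phi_{0,z},\phi_{1,z})^{d+1}\rangle$ on each circle $|z|=r$, the energies of the $\psi_r$ are harmonic and affinely interpolate the circle averages, and passing to the limit matches the slopes. With these two corrections your argument closes, and your geodesic paragraph matches the paper's (both of you pass rather quickly over plurisubharmonicity in $t$ of the image segment, which deserves a word).
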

\begin{remark} The first statement of the Theorem can be thought of as saying that hybrid maximal metrics have the $d_1$-extension property. The whole of Theorem \ref{thm_isometry} essentially means that we realize the (non-Archimedean) space $\cE^1(L\an)$ as a purely complex geometric object!
\end{remark}

\begin{proof}
We thus first show that our mapping preserves psh geodesic segments. Pick a psh geodesic segment $\phi_t\na$ between two metrics $\phi_0\na$ and $\phi_1\na$ in $\cE^1(L_\K\an)$, and consider for all $t\in[0,1]$ the hybrid maximal metric $\phi_t$ with $(\phi_t)\na=\phi_t\na$. By Theorem \ref{thm_1geodesics}, it is enough to show that $t\mapsto (\langle \phi_t^{d+1}\rangle_{X/\pdisc})\na$ is affine; by the Monge-Ampère energy extension property, this is equivalent to asking that $t\mapsto\langle \phi_t\na\rangle$ is affine, which holds by \cite{reb20b}. The reverse implication is proved in the same way.

\bsni We now prove the isometry statement. Pick $\phi_0$, $\phi_1$ in $\hat\cE^1_{\mathrm{hyb}}(L)$. We assume both metrics to be continuous, and the general result will proceed as usual from regularization. Using Theorem \ref{thm_uniquenesstropmaxdisc} together with the expressions of the distances and additivity of Lelong numbers,
$$d_1\na(\phi_0\na,\phi_1\na)=\langle(\phi_0\na)^{d+1}\rangle+\langle(\phi_1\na)^{d+1}\rangle-2\langle(P(\phi_0\na,\phi_1\na)^{d+1}\rangle,$$
$$d_1(\phi_{0,z},\phi_{1,z})=\langle\phi_{0,z}^{d+1}\rangle+\langle\phi_{1,z}^{d+1}\rangle-2\langle P(\phi_{0,z},\phi_{1,z})^{d+1}\rangle,$$
we only have to show that
$$(-\langle P(\phi_{0},\phi_{1})^{d+1}\rangle_{X/\pdisc})\na=-\langle P(\phi_0\na,\phi_1\na)^{d+1}\rangle.$$
Recall that we have seen $z\mapsto \langle P(\phi_{0,z},\phi_{1,z})^{d+1}\rangle$ to be superharmonic, so that the left-hand side is well-defined.

\bsni Set some $r\in (0,1)$. We consider the relatively maximal metric $\psi_r$ on the preimage $U_r$ of the annulus $\{r\leq z \leq 1\}$ with boundary data given by $z\mapsto P(\phi_{0,z},\phi_{1,z})$ for $z\in\partial U_r$, which exists by Theorem \ref{thm_existencemaximal}. Having fixed $z\in X$, the sequence $r\mapsto \psi_{r,z}$, $r\leq |\pi(z)|$, is decreasing as $r$ decreases, and therefore the limit $\lim_{r\to 0}\psi_r=:\psi$ is still a relatively maximal metric. As we have, for all $|\pi(z)|=r$,
$$\langle \psi_{r,z}^{d+1}\rangle=\langle P(\phi_{0,z},\phi_{1,z})^{d+1}\rangle,$$
it follows that
$$-(\langle \psi^{d+1}\rangle_{X/\pdisc})\na=(-\langle P(\phi_{0},\phi_{1})^{d+1}\rangle_{X/\pdisc})\na.$$
We must now prove that
$$(\langle \psi^{d+1}\rangle_{X/\pdisc})\na=\langle P(\phi_0\na,\phi_1\na)^{d+1}\rangle.$$
We first claim that $\psi$ realizes the supremum
$$\psi=\sup\{\varphi\in\PSH(L),\,\varphi\leq\phi_0,\phi_1\}.$$
Since $\psi$ is itself such a metric, it is enough to show that for all candidates $\varphi$, we have $\varphi\leq\psi$. But for all $z\in X$, since $\varphi_z\leq \phi_{0,z},\phi_{1,z}$, we have
$$\varphi_z\leq P(\phi_{0,z},\phi_{1,z})$$
hence
$$\varphi_z\leq \psi_{r,z}$$
and finally
$$\varphi_z\leq \lim_r \psi_{r,z}=\psi_z.$$
We now conclude: by this extremal characterization of $\psi$, we have that $\varphi\na\leq \psi\na$ for all $\varphi\leq \phi_0,\phi_1$. In particular, since the construction is order-preserving, the hybrid maximal metric $\Psi$ with $\Psi\na=P(\phi_0\na,\phi_1\na)$ satisfies $\Psi\leq \psi$, so that
$$P(\phi_0\na,\phi_1\na)\leq \psi\na,$$
while on the other hand, $\psi\leq \phi_0,\phi_1$, hence $\psi\na\leq \phi_0\na,\phi_1\na$ and finally
$$\psi\na\leq P(\phi_0\na,\phi_1\na).$$
\end{proof}

\begin{remark}The proof of the above result in the case of geodesic rays, which does not appear explicitly in the literature (but is based on some ideas from \cite{bdl}), was nicely explained to the author by Tamas Darvas.
\end{remark}

\begin{remark}In the above proof, we implicitly defined an envelope operator sending two metrics $\phi_0$, $\phi_1$ in $\hat\cE^1_{\mathrm{hyb}}(L)$ to the largest metric $\hat P(\phi_0,\phi_1)$  in $\hat\cE^1_{\mathrm{hyb}}(L)$ bounded above by $\phi_0$ and $\phi_1$. In \cite[Example 3.3]{xia}, this construction appears already in the case of geodesic rays, and Xia uses this envelope to define
alternative distance
$$\hat d_1'(\phi_0,\phi_1):=\lim_t t\mi (E(\phi_{0,t})+E(\phi_{1,t})-2E(\hat P(\phi_0,\phi_1)_t),$$
which (a specialization of) our proof shows to coincide with the usual distance $\hat d_1$. In fact, Xia defines this envelope more generally in \cite[Example 3.2]{xia}, in the radial equivalent of the space $\hat\cE^1(L)$. It is likely that this construction generalizes to metrics in $\hat\cE^1(L)$ in our setting, although this is outside the scope of the present article.
\end{remark}

\iffalse\begin{remark}We note that the induced projection $(\cdot)\na:\hat\cE^1(L)\to \cE^1(L_\K\an)$ is not distance preserving in general, or even contracting. Indeed, picking a hybrid maximal metric $\phi_0$ and a (strictly non-hybrid) maximal metric $\phi_1$ with $\phi_0\geq \phi_1$ yields
$$d_1\na(\phi_0\na,\phi_1\na)< \hat d_1(\phi_0,\phi_1)$$
while inverting the maximality properties (keeping the property $\phi_0\geq \phi_1$) reverses this inequality.
\end{remark}\fi

\subsection{Non-Archimedean extension of generalized functionals.}\label{subsect_35}

In Theorem \ref{thm_eonepairing}, we have seen that the fibrewise finite energy condition is the adequate condition for finiteness of fibrewise Deligne pairings. Further following the mantra that properties pertaining to the energy govern the same properties for more general Deligne pairings, we show that non-Archimedean extension of generalized energy functionals in the sense of Remark \ref{rem_functionals} holds for our class of hybrid maximal metrics, i.e. metrics satisfying the Monge-Ampère extension property.

\begin{prop}\label{prop_functionals}Suppose given $d+1$ relatively ample line bundles $L_i$ on $X$. Then, for any $(d+1)$-uple of metrics $\phi_i\in\hat\cE^1_{\mathrm{hyb}}(L_i)$, we have
$$(\langle\phi_0,\dots,\phi_d\rangle_{X/\pdisc})\na=\langle\phi\na_0,\dots,\phi\na_d\rangle.$$
\end{prop}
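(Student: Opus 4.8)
The plan is to reduce the general mixed Deligne pairing to the single-bundle case established in Theorem \ref{thm_delignelelong} (and its single-bundle extension to hybrid maximal metrics, which underlies Theorem \ref{thm_isometry}), via a polarization argument. Both sides of the claimed identity are, as functions on $\cX^{\mathrm{div}}$, the generalized Lelong numbers at zero of the metrized Deligne pairings expressed in a fixed trivialization $\tau$ of $\langle \cL_0,\dots,\cL_d\rangle_{\cX/\disc}$ for some chosen model $(\cX,\cL_i)$ of each $(X,L_i)$; the first step is to observe that the pushforward curvature formula \eqref{eq_ddc} applied to $\langle \phi_0,\dots,\phi_d\rangle_{X/\pdisc}$ shows this pairing is subharmonic (after fixing boundary data), so that the left-hand slope is well-defined, exactly as at the start of the proof of Theorem \ref{thm_delignelelong}.

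First I would treat the \emph{model case}: suppose each $\phi_i$ admits a locally bounded extension to $\cL_i$. This is precisely the content of Remark \ref{rem_functionals}, which records that the argument of Theorem \ref{thm_delignelelong} goes through verbatim for arbitrary mixed pairings — the slope of $-\log|\tau|_{\langle \phi_{0,z},\dots,\phi_{d,z}\rangle}$ vanishes because the pairing is locally bounded near zero, while $\phi_i\na=\phi_{\cL_i}$ by Proposition \ref{prop_extension_on_model}(2), and the model metrics on both sides agree. So in the locally bounded case there is nothing new to prove beyond citing Remark \ref{rem_functionals}.

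The main work, and the main obstacle, is the passage to general hybrid maximal metrics by approximation. For each $i$ I would pick a decreasing sequence of model metrics $\phi_{i,k}\na$ approximating $\phi_i\na$, with associated (locally bounded) hybrid maximal lifts $\phi_{i,k}$ as in the first step of the proof of Theorem \ref{thm_uniquenesstropmaxdisc}; by maximality these decrease to $\phi_i$. On the complex side, continuity of the extended Deligne pairing along decreasing nets (Theorem \ref{thm_eonepairing}) gives $\langle \phi_{0,k},\dots,\phi_{d,k}\rangle_{X/\pdisc}\to \langle \phi_0,\dots,\phi_d\rangle_{X/\pdisc}$ fibrewise; on the non-Archimedean side, continuity of the mixed non-Archimedean Deligne pairing along decreasing nets gives $\langle \phi_{0,k}\na,\dots,\phi_{d,k}\na\rangle\to\langle\phi_0\na,\dots,\phi_d\na\rangle$. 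The delicate point is that taking the generalized slope (a limit as $z\to 0$) does not automatically commute with the limit in $k$. I would handle this exactly as in the harmonicity/slope argument of Theorem \ref{thm_uniquenesstropmaxdisc}: using Corollary \ref{coro_trivialization} the function $z\mapsto -\log|\tau(z)|_{\langle \phi_{0,k,z},\dots,\phi_{d,k,z}\rangle}$ decomposes as $c_k\log|z|+H(z)$ with $H$ independent of $k$ (depending only on the fixed boundary data), where $c_k$ equals the slope, which by the model case equals $\langle\phi_{0,k}\na,\dots,\phi_{d,k}\na\rangle-\langle \phi_{\cL_0},\dots,\phi_{\cL_d}\rangle$. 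Passing to the limit in this explicit affine-in-$\log|z|$ expression then forces the slope of the limit to be the limit of the slopes, yielding
$$
(\langle \phi_0,\dots,\phi_d\rangle_{X/\pdisc})\na - \langle \phi_{\cL_0},\dots,\phi_{\cL_d}\rangle = \langle \phi_0\na,\dots,\phi_d\na\rangle - \langle \phi_{\cL_0},\dots,\phi_{\cL_d}\rangle,
$$
which is the desired identity. The one subtlety to be careful about — and where I expect the real difficulty to lie — is the lower bound on the slope: as in the derivation of \eqref{eq_harm6}, a priori the decreasing approximation only yields one inequality directly, so I would also run the complementary estimate comparing $\phi_i$ against the model-metric lifts of its own non-Archimedean limit, exploiting hybrid maximality of the $\phi_i$ (so that $\phi_i\na\le \psi_{i,k}\na$ implies $\phi_i\le\psi_{i,k}$), to pin down equality. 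This two-sided slope argument, rather than the polarization bookkeeping, is the crux.
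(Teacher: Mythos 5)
Your overall skeleton (model case via Remark \ref{rem_functionals}, then decreasing approximation by hybrid maximal lifts of model metrics) is the same as the paper's, but there is a genuine gap at the step you yourself flag as the crux. You invoke Corollary \ref{coro_trivialization} to write
$$-\log|\tau(z)|_{\langle\phi_{0,k,z},\dots,\phi_{d,k,z}\rangle}=c_k\log|z|+H(z)$$
with $H$ independent of $k$. That corollary (through Proposition \ref{prop_harmonicitymaximaldiscs}) applies only to the pure power $\langle\phi^{d+1}\rangle_{X/\pdisc}$ of a \emph{single} relatively maximal metric, where zero curvature forces the harmonic decomposition and pins $H$ down by the boundary data. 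For a mixed pairing of distinct relatively maximal metrics the curvature is $\pi_*(dd^c\phi_{0,k}\wedge\dots\wedge dd^c\phi_{d,k})$, a positive measure with no reason to vanish: relative maximality kills each $(dd^c\phi_{i,k})^{d+1}$ but not the mixed products (this is exactly why the twisted energies of Example \ref{exa_functionals} are merely convex, not affine, along geodesics, and why the paper's own proof of this Proposition asserts only \emph{subharmonicity} of such pairings along relatively maximal metrics). So the mixed pairing of the approximants is only subharmonic, its bounded remainder depends on $k$ (the models on which the $\phi_{i,k}$ are locally bounded vary with $k$), and "slope of the limit equals limit of the slopes" is unjustified --- indeed false in general for decreasing sequences of subharmonic functions: $f_k=\max(\log|z|,-k)$ has $\hat f_k=0$ for every $k$ but decreases to $\log|z|$, which has slope $-1$. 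Your fallback for the lower bound does not repair this: hybrid maximality plus slot-by-slot monotonicity of the pairing gives $\langle\phi_0,\dots,\phi_d\rangle\le\langle\phi_{0,k},\dots,\phi_{d,k}\rangle$, hence after taking slopes and letting $k\to\infty$ only the inequality $(\langle\phi_0,\dots,\phi_d\rangle_{X/\pdisc})\na\le\langle\phi_0\na,\dots,\phi_d\na\rangle$, i.e. the easy direction already contained in Corollary \ref{coro_eonemapping}; the reverse inequality is what is at stake.

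The missing ingredient, and how the paper actually proceeds, is a \emph{quantitative} bound on the approximation error in place of a structural (harmonicity) argument: by the estimates of \cite[Lemma A.1, Lemma A.2]{bbj} one has, fibrewise,
$$0\le\langle\phi_{0,k,z},\dots,\phi_{d,k,z}\rangle-\langle\phi_{0,z},\dots,\phi_{d,z}\rangle\le C(z)\cdot\max_i d_1(\phi_{i,k,z},\phi_{i,z}),$$
where $C(z)$ has finite slope. Taking generalized slopes and invoking the isometry of Theorem \ref{thm_isometry} --- the $d_1$-extension property of hybrid maximal metrics, which identifies the slope of $z\mapsto\max_i d_1(\phi_{i,k,z},\phi_{i,z})$ with $\max_i d_1\na(\phi_{i,k}\na,\phi_i\na)$ --- makes the error's slope tend to $0$ as $k\to\infty$, and the sandwich then yields equality. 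In other words, the proof genuinely needs Theorem \ref{thm_isometry} as an input; your reduction tries to run on Theorem \ref{thm_uniquenesstropmaxdisc} and Corollary \ref{coro_trivialization} alone, and that cannot work for mixed pairings.
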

\begin{proof}
We approximate each of the $\phi_i\na$ by a decreasing sequence of model metrics $\phi_{i,k}\na$, and denote by $\phi_{i,k}$ their associated hybrid maximal metrics. By our previous results, $\phi_{i,k}$ decreases to $\phi_i$ by hybrid maximality. Since Deligne pairings are decreasing along (mixed) decreasing limits, using the estimates \cite[Lemma A.1, Lemma A.2]{bbj}, we find for all $z$ in $X$
$$0\leq \langle \phi_{0,k,z},\dots,\phi_{d,k,z}\rangle-\langle\phi_{0,z},\dots,\phi_{d,z}\rangle\leq C(z)\cdot \max_i d_1(\phi_{i,k,z},\phi_{i,z}),$$
where the slope of $C(z)$, $\hat C$, is a finite real constant. Indeed, $C(z)$ is a maximum of a collection of functionals expressed as Deligne pairings, which are all subharmonic along relatively maximal metrics. We take generalized slopes in the above inequality to find
$$0\leq (\langle\phi_{0,k},\dots,\phi_{d,k}\rangle_{X/\pdisc})\na-(\langle\phi_{0},\dots,\phi_{d}\rangle_{X/\pdisc})\na\leq \hat C\cdot \max_i d_1\na(\phi_{i,k}\na,\phi_{i}\na),$$
where we have used the $d_1$-extension property of hybrid maximal metrics. Using Remark \ref{rem_functionals}, we then have that
$$0\leq \langle\phi_{0,k}\na,\dots,\phi_{d,k}\na\rangle-(\langle\phi_{0},\dots,\phi_{d}\rangle_{X/\pdisc})\na\leq \hat C\cdot \max_i d_1\na(\phi_{i,k}\na,\phi_{i}\na),$$
and taking the limit in $k$ in the above inequality finally gives our result.
\end{proof}

\begin{example}\label{exa_functionals}Many functionals acting on $\PSH(L)$ satisfy the statement of the above Proposition. Having fixed some reference metric $\refmetric\in\hat\cE^1_{\mathrm{hyb}}(L)$, some among the most important are:
\begin{enumerate}
\item the $I$-functional, which appeared in the estimates mentioned in the above proof, is defined as
$$I(\phi)=\langle \phi-\refmetric,\refmetric^d\rangle_{X/\pdisc} - \langle \phi-\refmetric,\phi^d\rangle_{X/\pdisc},$$
which has many important norm-like properties and is commonly used to study properties of finite-energy spaces (\cite{bbegz}, \cite{bhjasymptotics}, see also \cite{bjtrivval21} for the non-Archimedean trivially-valued case);
\item the $J$-functional, defined as
$$J(\phi)=\langle\phi,\refmetric^d\rangle_{X/\pdisc}-\langle\refmetric^{d+1}\rangle_{X/\pdisc}-(d+1)\mi(E(\phi)-E(\refmetric)),$$
which can be seen as a corrected relative Monge-Ampère energy which is translation invariant;
\item the twisted energy functionals, defined as
$$E^{\psi}(\phi)=\langle \psi,\phi^d\rangle_{X/\pdisc},$$
for $\psi\in\hat\cE^1_{\mathrm{hyb}}(L')$, where $L'$ is another line bundle on $X$. A special case of it appears in the expression of the Mabuchi K-energy, and the study of its slopes in the trivially-valued case is essential to establish the general (cscK) case of the Yau-Tian-Donaldson conjecture, as in \cite{liytd}.
\end{enumerate}
\end{example}

\subsection{Test configurations and the trivially valued case.}\label{subsect_36}

All of our previous results encapsulate the trivially valued case, as we explain now. Let $\pi:X\to\bbd^*$ be now a polarized test configuration, i.e. a degeneration with relatively ample line bundle $L$ such that $\pi$ and $L$ are equivariant under some $\bbc^*$-action (forcing all fibre pairs $(X_z,L_z)$ to be isomorphic). One may then choose a reference continuous psh metric $\refmetric$ on the fibre at $1$ and require our psh metrics $\phi$ to satisfy $\phi_z=i_z{}^*\refmetric$ for $z\in\bbs^1$, and with
$$i_z:X_z\to X_1$$
the isomorphism as mentioned above. The authors in \cite{bbj} (e.g.) study the space $\cE^1_0(X_1\an)$ of finite-energy metrics over the analytification of $X_1$ with respect to the trivial absolute value on $\bbc$. We denote by $\cR^1(L_1)$ the space of hybrid maximal finite-energy rays in $\PSH(L_1)$ emanating from $\refmetric$ (where, as mentioned before, a hybrid maximal ray corresponds in the terminology of \cite{bbj} to a maximal psh geodesic ray). We then claim the following:
\begin{prop}
There is a sequence of distance-preserving maps
$$\cE^1_0(L_1\an)\simeq \cR^1(L_1) \hookrightarrow \hat\cE_{\mathrm{hyb}}^1(L) \simeq \cE^1(L\an),$$
where the first and last maps are bijective (i.e. isometries), while the middle map is injective.
\end{prop}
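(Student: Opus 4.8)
The plan is to establish the three maps separately, since each carries distinct content. The rightmost isometry $\hat\cE^1_{\mathrm{hyb}}(L)\simeq\cE^1(L\an)$ is nothing but Theorem \ref{thm_isometry} applied to the degeneration underlying $X$, a polarized test configuration being in particular a degeneration in our sense; I would simply invoke it. The leftmost isometry $\cE^1_0(L_1\an)\simeq\cR^1(L_1)$ is the Berman--Boucksom--Jonsson correspondence \cite{bbj} between finite-energy metrics on the trivially-valued analytification and maximal psh geodesic rays emanating from $\refmetric$, the fact that it intertwines the non-Archimedean distance with the radial distance $d_1^\cR$ being the content of Darvas--Lu \cite{darlurays}. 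The substance of the Proposition therefore lies in the middle injective map $\cR^1(L_1)\hookrightarrow\hat\cE^1_{\mathrm{hyb}}(L)$, which I would construct and analyze by hand.

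To build the middle map, I would use the $\bbc^*$-action to identify $X$ with the product $X_1\times\pdisc$ over the punctured disc (the action being free and transitive on the base $\bbd^*$), under which a ray $t\mapsto\phi_t\in\cR^1(L_1)$ transports to the $\bbs^1$-invariant metric $\Phi$ on $L$ given fibrewise by $\Phi_z=\phi_{-\log|z|}$ (pulled back along the action, the $\bbs^1$-invariance making this well-defined). The first task is to verify $\Phi\in\hat\cE^1_{\mathrm{hyb}}(L)$: logarithmic growth is exactly the linear growth of the ray (cf. the discussion of psh rays following Lemma \ref{lem_extpshtame}); relative maximality is precisely the geodesic condition on the ray (as noted in the Remark of Section \ref{sect_relmax}); fibrewise finite energy is immediate; and hybrid maximality amounts to the Monge-Ampère extension property \eqref{eq_ext} for $\Phi$. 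This last point is where I expect to spend the most care.

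Distance-preservation is comparatively clean: for $\bbs^1$-invariant metrics $\Phi_0,\Phi_1$ the function $z\mapsto d_1(\Phi_{0,z},\Phi_{1,z})$ depends only on $t=-\log|z|$, equalling $d_1(\phi_{0,t},\phi_{1,t})$, so its generalized slope at zero in the sense of Section \ref{sect_lelong} coincides with its slope at infinity, giving $\hat d_1(\Phi_0,\Phi_1)=d_1^\cR(\phi_0,\phi_1)$. Injectivity is immediate, distinct rays yielding distinct transported metrics; the failure of surjectivity reflects the existence of non-$\bbs^1$-invariant hybrid maximal metrics over $X$.

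The main obstacle is the verification of hybrid maximality, which is precisely the reconciliation of the two non-Archimedean pictures: the trivially-valued analytification $X_1^{\mathrm{an},0}$ of $X_1$ over $\bbc$ (underlying the leftmost map) and the discretely-valued analytification $X_{1,\K}\an=X_\K\an$ over $\K$ (underlying the rightmost map). One must check that the Gauss/base-change embedding $X_1^{\mathrm{an},0}\hookrightarrow X_{1,\K}\an$ carries the trivially-valued metric associated by \cite{bbj} to a maximal ray onto the discretely-valued metric $\Phi\na$ produced by our construction (generic Lelong numbers along vertical divisors of models of $X$), and that under this identification the energy-matching $\lim_t t\mi E(\phi_t)=E\na_0(\phi_0\na)$ of \cite{bbj} becomes exactly \eqref{eq_ext}, i.e. that the trivially-valued and discretely-valued Monge-Ampère energies agree on the $\bbc^*$-invariant part. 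Granting this compatibility, hybrid maximality follows and the three maps assemble into the asserted distance-preserving sequence.
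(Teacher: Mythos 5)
Your proposal matches the paper's proof essentially step for step: the outer maps are handled exactly as you suggest (Theorem \ref{thm_isometry} for the right-hand isometry, \cite[Theorem 6.6]{bbj} plus a radial distance comparison for the left-hand one, the latter left to the reader as ``a simpler version of Theorem \ref{thm_isometry}''), and the middle map is the same transport $z\mapsto i_z{}^*(\phi_{-\log|z|})$ along the $\bbc^*$-action, with injectivity and distance-preservation treated as immediate. The only divergence is one of emphasis rather than route: where you flag the compatibility between the trivially-valued and $\K$-valued non-Archimedean pictures as the main verification for hybrid maximality, the paper dispatches it as ``given by construction,'' leaning on its earlier Remark in Section \ref{subsect_33} identifying maximal geodesic rays of \cite{bbj} with hybrid maximal $\bbc^*$-invariant metrics, and it obtains relative maximality from affineness of the energy (\cite[Corollary 6.7]{bbj}) together with Proposition \ref{prop_harmonicitymaximaldiscs} rather than from the Remark of Section \ref{sect_relmax} — two equivalent justifications of the same fact.
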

\begin{proof}
The case of the last map has been treated by Theorem \ref{thm_isometry}. The rest of the proof is merely a matter of correctly defining our maps.

\bsni For the first map, the bijection is given by \cite[Theorem 6.6]{bbj}. The metrization of the space $\cE^1_0(L_1\an)$ is described in a \cite{bjtrivval21}, but proceeds much as the metrization of $\cE^1(L\an)$ in \cite{reb20b}, while we recall that we metrize the space of maximal rays by 
$$\hat d_{1,0}(\phi,\phi')=\lim_t t\mi d_1(\phi_t,\phi'_t)$$
and take equivalence classes to yield the space $\cR^1(L_1)$.
(We direct the reader to e.g. \cite{bdl}. Note that in the cited article, the authors consider the space of all (non-necessarily hybrid) maximal psh rays.) Proving the distance-preservingness of the isomorphism is then essentially a simpler version of Theorem \ref{thm_isometry}, which we leave to the interested reader.

\bsni We claim that the middle map, which we will denote $\iota_0$, can be represented as follows: let $\phi:t\mapsto \phi_t$ be a hybrid maximal psh geodesic ray in $X_1$. Let $i_z$ be as before the isomorphism $i_z:X_z\to X_1$, and define $\iota_0(\phi)$ to be the metric
$$z\mapsto i_z{}^*(\phi_{-\log|z|}).$$
The distance-preservingness is immediate(!), so that we are left to check that $\iota_0(\phi)$ is a hybrid maximal metric. By \cite[Corollary 6.7]{bbj},
$$t\mapsto E(\phi_t)$$
is affine, which implies by invariance of the energy under polarized isomorphisms that
$$z\mapsto \iota_0(\phi)(z)$$
is harmonic on $\bbd^*$, proving maximality by Proposition \ref{prop_harmonicitymaximaldiscs}, and hybrid maximality is given by construction.
\end{proof}

\begin{remark}
One also notices (by mimicking our proofs in the discretely valued case) there to be under the above maps a correspondance 
$$\{\text{non-Archimedean maximal psh segments in }\cE^1_0(L_1\an)\}$$
$$\mathrel{\rotatebox[origin=c]{-90}{$\simeq$}}$$
$$\{\text{rays of complex geodesics between two rays in }\cR^1(L_1)\}$$
$$\mathrel{\rotatebox[origin=c]{-90}{$\hookrightarrow$}}$$
$$\{\text{discs of complex geodesics between two metrics in }\hat\cE_{\mathrm{hyb}}^1(L)\}$$
$$\mathrel{\rotatebox[origin=c]{-90}{$\simeq$}}$$
$$\{\text{non-Archimedean maximal psh segments in }\cE^1(L\an)\}.$$
We may state the most interesting part of this result as follows.
\end{remark}

\begin{prop}\label{prop_trivval}
Let $t\mapsto \phi_{0,t}$, $t\mapsto \phi_{1,t}$ be maximal psh geodesic rays in the sense of \cite{bbj}. Let, for all $t$,
$$[0,1]\ni s\mapsto \phi_{s,t}$$
be the maximal psh segment joining $\phi_{0,t}$ and $\phi_{1,t}$. Then, for all $s\in[0,1]$, $t\mapsto \phi_{s,t}$ is a maximal psh geodesic ray in the sense of \cite{bbj}.

\bsni Furthermore, let for all $s\in[0,1]$, $\phi_s\na$ be the non-Archimedean metric associated to the psh geodesic ray $s\mapsto \phi_{s,t}$. Then,
$$s\mapsto \phi_s\na$$
is the maximal non-Archimedean psh geodesic joining $\phi_0\na$ and $\phi_1\na$ in the sense of \cite{reb20b}.
\end{prop}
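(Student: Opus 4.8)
The plan is to realise both the complex fibrewise geodesic and the non-Archimedean geodesic as psh geodesics in the single ambient space $\hat\cE^1(L)$ (where $L$ now sits over the test configuration), and then to collapse them onto one another by the uniqueness of such geodesics. First I would set $\Phi_i:=\iota_0(\phi_{i,\bullet})$ for $i=0,1$, the $\bbs^1$-invariant hybrid maximal metrics in $\hat\cE^1_{\mathrm{hyb}}(L)$ attached to the two rays by the previous Proposition, so that $\Phi_i\na=\phi_i\na$. Then I would invoke Theorem \ref{thm_1geodesics} to produce the psh geodesic $s\mapsto\Phi_s$ in $\hat\cE^1(L)$ joining $\Phi_0$ and $\Phi_1$, recalling that Theorem \ref{thm_relsegments} builds it fibrewise. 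The key elementary observation is that, by $\bbc^*$-equivariance of the fibrewise geodesic construction together with $\Phi_{i,z}=i_z{}^*(\phi_{i,-\log|z|})$, one gets $\Phi_{s,z}=i_z{}^*(\phi_{s,-\log|z|})$; in other words $\Phi_s$ is exactly the $\bbs^1$-invariant metric $z\mapsto i_z{}^*(\phi_{s,-\log|z|})$ attached to the candidate ray $t\mapsto\phi_{s,t}$. Thus proving statement (1) reduces to showing that each $\Phi_s$ is hybrid maximal (it is automatically relatively maximal and $\bbs^1$-invariant).

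Independently, I would take the maximal non-Archimedean psh geodesic $s\mapsto\Psi_s\na$ joining $\phi_0\na$ and $\phi_1\na$ (existence from \cite{reb20b}), and push it into $\hat\cE^1(L)$ by the inverse of $(\cdot)\na$ from Theorem \ref{thm_uniquenesstropmaxdisc}, obtaining hybrid maximal metrics $\Psi_s$. By the geodesic-preserving half of Theorem \ref{thm_isometry}, $s\mapsto\Psi_s$ is a psh geodesic in $\hat\cE^1(L)$, and since each $\Psi_s$ is hybrid maximal the Monge-Amp\`ere extension property identifies $s\mapsto E(\Psi_s)\na$ with $s\mapsto E\na(\Psi_s\na)$, which is affine because $\Psi_s\na$ is a non-Archimedean geodesic. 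Its endpoints coincide with those of $\Phi_s$, since $\Psi_i$ and $\Phi_i$ are both hybrid maximal with non-Archimedean metric $\phi_i\na$, hence equal by the uniqueness clause of Theorem \ref{thm_uniquenesstropmaxdisc}. As $\Phi_s$ and $\Psi_s$ are then two psh segments with the same endpoints and both with affine energy slopes, the uniqueness statement of Theorem \ref{thm_1geodesics} forces $\Phi_s=\Psi_s$ for all $s$.

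With this identification both assertions follow. Since $\Phi_s=\Psi_s$ lies in $\hat\cE^1_{\mathrm{hyb}}(L)$ and is $\bbs^1$-invariant, the bijection $z\mapsto i_z{}^*(\cdot)$ between rays and $\bbs^1$-invariant metrics sends $t\mapsto\phi_{s,t}$ to a hybrid maximal metric, which is precisely the condition for $t\mapsto\phi_{s,t}$ to be a maximal psh geodesic ray in the sense of \cite{bbj}; this is statement (1). For statement (2), $\phi_s\na$ is by definition the non-Archimedean metric of that ray, namely $\Phi_s\na=\Psi_s\na$, which is the chosen non-Archimedean geodesic; hence $s\mapsto\phi_s\na$ is the maximal non-Archimedean psh geodesic joining $\phi_0\na$ and $\phi_1\na$ (the trivially valued geodesic being recovered as the corresponding object under the isometric, geodesic-preserving embedding of the previous Proposition).

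The hard part will be exactly the promotion of relative maximality to hybrid maximality along the complex geodesic: Theorem \ref{thm_1geodesics} gives relative maximality of $\Phi_s$ for free, but there is no direct envelope estimate yielding the Monge-Amp\`ere extension property at intermediate $s$, so the comparison with the non-Archimedean geodesic and the uniqueness of psh geodesics are doing the essential work. A secondary point requiring care is checking that the notion of ``psh geodesic'' in Theorems \ref{thm_isometry} and \ref{thm_1geodesics} genuinely coincides — both being the segments along which the (generalized slope of the) Monge-Amp\`ere energy is affine — so that the uniqueness clause legitimately applies to $\Psi_s$.
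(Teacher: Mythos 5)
Your proposal is correct and takes essentially the same approach as the paper: the paper's own (largely implicit) argument is exactly the correspondence chain of the preceding Remark and Proposition, i.e.\ transporting the rays into $\hat\cE^1_{\mathrm{hyb}}(L)$ via $\iota_0$, identifying the fibrewise complex geodesic with the image of the non-Archimedean geodesic through Theorem \ref{thm_isometry} and the uniqueness clauses of Theorems \ref{thm_1geodesics} and \ref{thm_uniquenesstropmaxdisc}, and translating back through the dictionary with \cite{bbj}. Your write-up simply makes explicit the equivariance identification $\Phi_{s,z}=i_z{}^*(\phi_{s,-\log|z|})$ and the collapse-by-uniqueness step that the paper leaves to ``mimicking the proofs in the discretely valued case.''
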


\subsection{Convexity of non-Archimedean functionals.}\label{subsect_37}

Via the isometry $\iota$ given by Theorem \ref{thm_isometry}, it is now clear what we mean by "a functional on the space of hybrid maximal metrics", since $\hat\cE_{\mathrm{hyb}}^1(L)$ inherits a $\K$-vector space structure by setting, for all $\phi$, $\psi\in\hat\cE^1(L)$, and $\lambda\in\K$,
$$\phi+\lambda\cdot\psi=\iota\mi(\iota(\phi)+\lambda\cdot\iota(\psi)).$$
(In particular, one can see multiplication by a scalar in $\K$ as a fibrewise scaling of the metrics, varying meromorphically.)

\begin{defi}
Let $\hat F$ be a functional on $\hat\cE_{\mathrm{hyb}}^1(L)$ and $F\na$ be a functional on $\cE^1(L\an)$. We say that $F\na$ is a non-Archimedean extension of $\hat F$ if the diagram
\begin{center}
%\begin{tikzcd}[column sep=small]
\begin{tikzcd}[column sep=small]
{\hat\cE^1(L)} \arrow["\iota", rr] \arrow["\hat F" swap, rd] &    & {\cE^1(L\an)} \arrow["F\na", ld] \\
                         & {\bbr} &              
\end{tikzcd}
\end{center}
commutes.
\end{defi}

\begin{example}By construction, $E\na$ is a non-Archimedean extension of the "discal" energy $\hat E$. Furthermore, all "generalized energy" functionals of Proposition \ref{prop_functionals} and Example \ref{exa_functionals} admit non-Archimedean extensions.
\end{example}

\noindent By Theorem \ref{thm_isometry}, we get "for free" a way to study convexity of non-Archimedean functionals.

\begin{heuristic}\label{heur_convexity}Let $F$ be a functional that is  convex along complex psh geodesics, $\hat F$ its "discal" version, and $F\na$ a non-Archimedean extension of $\hat F$. Then $F\na$ is convex along maximal non-Archimedean psh geodesics in $\cE^1(L\an)$.
\end{heuristic}
\begin{proof}
Given a non-Archimedean maximal psh segment $\phi\na_t$ joining $\phi\na_0$ and $\phi\na_1\in\cE^1(L\an)$, we can write using Theorem \ref{thm_isometry} $\phi_t:=\iota\mi(\phi\na_t)$ using the maximal psh segments $t\mapsto \phi_{t,z}$ joining the $\phi_{0,z}$ and $\phi_{1,z}$. We then simply write for all $z$
$$F(\phi_{t,z})\leq (1+t)F(\phi_{0,z})+tF(\phi_{1,z}),$$
and take the limit to find
$$\hat F(\phi_t)\leq (1+t)\hat F(\phi_0)+t\hat F(\phi_1),$$
which by the definition of a non-Archimedean extension together with $\iota(\iota\mi(\phi\na))=\phi\na)$ gives
$$F\na(\phi\na_t)\leq (1+t)F\na(\phi\na_0)+tF\na(\phi\na_1)$$
as desired.
\end{proof}

\begin{remark}
Using Proposition \ref{prop_trivval}, the same results also hold \textit{mutatis mutandis} in the trivially-valued case.
\end{remark}

\begin{example}\label{exa_mabuchi}This allows us to obtain convexity of the (trivially-valued) non-Archimedean K-energy modulo the entropy approximation conjecture, as follows. Pick a compact Kähler manifold $X_1$ together with an ample line bundle $L_1$, and consider the trivially-valued analytification $(X_1\an,L_1\an)$ as before. One then introduces the non-Archimedean entropy as
$$H\na(\phi\na)=\int_X A_X \MA(\phi\na),$$
where $A_X$ is the log-discrepancy function on $X\an$ identified with a space of semivaluations (\cite{bjkstab}), and $\phi\in\cE^1_0(L_1\an)$. The entropy approximation conjecture states that, given $\phi\na\in\cE^1_0(L_1\an)$, there exists a sequence $\phi_k\na$ of model metrics converging to $\phi\na$ such that $H\na(\phi_k\na)\to H\na(\phi)$.

\bsni Now, Chi Li (\cite[Conjecture 1.6]{liytd}) shows that, assuming this conjecture, the entropy $H\na$ is exactly the non-Archimedean (radial) extension of the usual complex entropy functional. Adding the energy part that makes up the Mabuchi K-energy, which is convex along complex geodesics, and using Example \ref{exa_functionals}, our previous result shows that the non-Archimedean K-energy is convex along non-Archimedean geodesics if the conjecture holds.

\bsni As things currently stand, extension of the K-energy is only known for rays that generate model metrics, via \cite[Theorem 3.6]{bhjasymptotics}. However, the non-Archimedean geodesics of \cite{reb20b} do not remain in the space of model metrics even if the endpoints are, much as geodesics between Kähler potentials are merely $C^{1,\bar 1}$.
\end{example}

\begin{example}\label{exa_ding}In \cite{blumliu}, Blum-Liu-Xu-Zhuang prove, using algebraic techniques, convexity of the non-Archimedean Ding energy (and other functionals) along geodesics between test configurations (\cite[Theorem 3.7]{blumliu}). Our heuristic allows us to also recover this result: convexity of the complex Ding energy is a result of Berndtsson (\cite{berndtding}) while the non-Archimedean extension of the Ding energy follows from \cite{bhjasymptotics}.
\end{example}

\begin{remark}As a Corollary, existence and uniqueness of minimizers for non-Archimedean energy functionals can also be detected strict convexity results in the complex setting, but some form of  uniform strict convexity (in $|z|$) is required to ensure strict convexity of the limit.
\end{remark}

\subsection{Kähler-Einstein metrics in families.}\label{subsect_misc}

Let $M$ be a complex projective manifold with ample canonical bundle $K_M$. It is a consequence of the by now classical Aubin-Yau Theorem that $M$ carries a Kähler-Einstein metric. If $X$ is more generally a family of canonically polarized projective manifolds, the family $z\mapsto\phi_z$ of fibrewise Kähler-Einstein metrics is known to have plurisubharmonic variation (from the work of e.g. Schumacher \cite{schumacher}) and to have logarithmic growth (\cite[Theorem 3]{sch08}). In particular, $\phi$ defines a metric in our class $\cE^1(K_X)$. Interpreting the work of Pille-Schneider through our lens, one can see \cite[Theorem A]{ps} to essentially imply that $\phi\na$ corresponds to a model metric in $\cH(K_X\an)$ associated to a distinguished model of $(X,K_X)$ (see e.g. \cite{tian}, \cite{song}).

\bsni An immediate question arises: how does the metric $\phi$ relate to our relatively maximal metrics framework? In particular, how does $\phi$ relate to the hybrid maximal metric $\Phi$ corresponding to $\phi\na$? Interestingly, $\phi$ is not even relatively maximal when the Kodaira-Spencer class of the family $X$ is nontrivial, by \cite[Main Theorem]{schumacher}, since $\phi$ will be strictly positive (in particular, cannot satisfy $\MA(\phi)=0$). As a consequence, we have that $dd^c d_1(\phi,\Phi)=dd^c (E(\Phi) - E(\phi))$  is given explicitly by the formula of Schumacher, using the pushforward formula for Deligne pairings.

\bsni Naturally, it would be interesting to know whether one could detect via non-Archimedean tools the existence of a family of Kähler-Einstein metrics in the class of a hybrid maximal metric. This seems a bit ambitious, since one only captures the "asymptotic" behaviour of a family of metrics when considering non-Archimedean data. A more realistic (and perhaps just as interesting) problem would be solving the following hybrid "almost Kähler-Einstein" problem: to find $\phi\in\hat\cE^1_{\mathrm{hyb}}(K_X)$, such that $\phi\na=\psi\na$ where $\psi\na$ is an "almost Kähler-Einstein metric":
\begin{align*}
(\omega_z +i\partial\bar\partial \psi_z)^d - e^{h_{\omega_z} + \psi_z}\omega_z{}^d \to_{z\to 0} 0.
\end{align*}
The upshot is that this problem gives, intuitively, a purely non-Archimedean criterion for the existence of a family of complex manifolds degenerating to a Kähler-Einstein manifolds! (Of course, the same problem arises in the (possibly twisted) Fano case.)

%\bsni As a last question in that regard, the author wonders whether the discrepancy between $\phi$ and $\iota\mi(\phi\na)$ can be controlled explicitly via a formula involving the Kodaira-Spencer class of the degeneration, in the spirit of the result of Schumacher.

\bsni Finally, we briefly mention an additional difficulty in the Calabi-Yau case. By a counterexample of Cao-Guenancia-Paun, we know that a family $\phi=(\phi_z)_z$ of Kähler-Einstein metrics on a degeneration of Calabi-Yau manifolds does not necessarily vary plurisubharmonically (\cite[Theorem 3.1]{cgp2}). One can however take the plurisubharmonic envelope $P(\phi)$ of $\phi$, and then the hybrid maximal metric $\Phi$ with $\Phi\na=P(\phi)\na$. In \cite{bjtrop}, Boucksom-Jonsson show that the family of measures $\MA(\phi_z)$ converge in a certain sense to the non-Archimedean Monge-Ampère measure of some metric $\psi\na$. We therefore formulate the following result, which would connect our hybrid maximal setting with degenerations of Kähler-Einstein metrics on Calabi-Yau manifolds:
\begin{conjecture}
$\psi\na=P(\phi)\na=\Phi\na$.
\end{conjecture}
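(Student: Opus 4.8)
The plan is to first observe that the second equality $P(\phi)\na=\Phi\na$ holds \emph{by construction}: the metric $\Phi$ was defined precisely as the hybrid maximal representative satisfying $\Phi\na=P(\phi)\na$. The whole content of the Conjecture is therefore the single identity $\psi\na=P(\phi)\na$ between two non-Archimedean metrics on $K_X\an$. My strategy is to realize both sides as finite-energy solutions of the \emph{same} non-Archimedean Monge-Ampère equation and then conclude by uniqueness. By \cite{bjtrop}, the metric $\psi\na$ is characterized as the (suitably normalized) finite-energy solution of
$$\MA(\psi\na)=\mu_0,\qquad \mu_0:=\lim_{z\to 0}\MA(\phi_z),$$
the limit being taken in the hybrid sense, uniqueness of such a solution being the non-Archimedean Calabi-Yau statement. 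A preliminary step is to check that $P(\phi)$ lies in $\cE^1(K_X)$: since $\phi\in\cE^1(K_X)$ by \cite{sch08}, and $P(\phi)$ is squeezed between a fixed comparison psh metric of logarithmic growth and $\phi$ itself, it has logarithmic growth and fibrewise finite energy, so that $P(\phi)\na\in\cE^1(K_X\an)$ is well-defined via Theorem \ref{thm_namap}.

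It then suffices to prove that $\MA(P(\phi)\na)=\mu_0$, after which uniqueness forces $P(\phi)\na=\psi\na$. I would establish this in two stages. First, I would invoke the hybrid continuity of the Monge-Ampère operator along the map $(\cdot)\na$ provided by the framework of \cite{bjtrop}: for a psh metric $\Psi$ of logarithmic growth, the fibrewise complex measures $\MA(\Psi_z)$ converge, as $z\to 0$ in the hybrid topology, to the non-Archimedean measure $\MA(\Psi\na)$. Applied to $\Psi=P(\phi)$ this gives
$$\MA(P(\phi)\na)=\lim_{z\to 0}\MA\bigl(P(\phi)_z\bigr).$$
Second, I would compare the fibrewise envelope measures $\MA(P(\phi)_z)$ with the Kähler-Einstein measures $\MA(\phi_z)$. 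The guiding principle is the orthogonality relation for plurisubharmonic envelopes (à la Berman-Boucksom): the mass of $\MA(P(\phi)_z)$ concentrates on the contact locus $\{P(\phi)=\phi\}$, where the two metrics, and hence their Monge-Ampère densities, agree. Granting that the complement of this contact locus carries asymptotically negligible mass, one obtains $\lim_z\MA(P(\phi)_z)=\lim_z\MA(\phi_z)=\mu_0$, completing the argument.

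The hard part will be this second stage, that is, controlling the discrepancy between the genuine Kähler-Einstein family and its plurisubharmonic envelope in the degenerate limit. The counterexample of Cao-Guenancia-Paun (\cite[Theorem 3.1]{cgp2}) shows this discrepancy is really nonzero, so fibrewise equality is hopeless; the claim must instead be that the failure of plurisubharmonicity is asymptotically bounded and thus invisible in the hybrid limit. I expect one must quantify this using the explicit curvature formulas of Schumacher (\cite{schumacher}) and \cite{cgp2} for the fibrewise Kähler-Einstein potential, to show that the region where $P(\phi)_z<\phi_z$ contributes only lower-order growth in $-\log|z|$, and then combine this with the concentration of $\mu_0$ on the essential skeleton established in \cite{bjtrop}. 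Making the passage from ``the contact set fills up'' to genuine weak convergence of measures rigorous, uniformly in $z$ near the puncture, is where the analytic difficulty is concentrated, and is presumably why the statement is recorded as a conjecture rather than a theorem.
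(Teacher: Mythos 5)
First, a structural remark: the statement you are proving is recorded in the paper as a \emph{conjecture}, with no proof supplied, so there is no argument of the paper to compare yours against; the only question is whether your sketch actually closes the gap, and it does not. The parts that work: the equality $P(\phi)\na=\Phi\na$ is indeed immediate from the definition of $\Phi$, and reducing everything to the single identity $\MA(P(\phi)\na)=\mu_0$, to be combined with uniqueness in the non-Archimedean Calabi--Yau theorem, is a sensible framing consistent with how the paper characterizes $\psi\na$. The fatal step is your ``first stage'': the claim that \cite{bjtrop} provides, for an \emph{arbitrary} psh metric $\Psi$ of logarithmic growth, convergence of the fibrewise measures $\MA(\Psi_z)$ to the non-Archimedean measure $\MA(\Psi\na)$ in the hybrid topology. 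No such theorem exists there: Boucksom--Jonsson prove convergence of the canonical Calabi--Yau measures $\MA(\phi_z)$ to a specific skeletal measure $\mu_0$, not a general hybrid continuity of the Monge--Amp\`ere operator along $(\cdot)\na$. That general continuity is precisely the open problem hiding behind the conjecture (closely tied to the circle of questions around \cite{yangli}), and $P(\phi)$ is in particular not a model metric, so even the special cases where such convergence is known do not apply. Granting that step, the conjecture is nearly immediate --- which is to say your argument is circular at its core, relocating rather than resolving the difficulty.

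Two further steps are incorrect as written. In the Calabi--Yau setting of this conjecture the family $\phi$ is \emph{not} plurisubharmonic --- that is exactly the content of the Cao--Guenancia--P{\u{a}}un counterexample \cite[Theorem 3.1]{cgp2}, and the very reason the envelope $P(\phi)$ is introduced --- so your preliminary claim that $\phi\in\cE^1(K_X)$ ``by \cite{sch08}'' is false: that reference concerns canonically polarized families, where psh variation does hold. One must instead prove that $P(\phi)$ is nontrivial, i.e. bounded below by some psh metric of logarithmic growth, before $P(\phi)\na$ even makes sense, and your squeezing argument silently assumes the lower bound it is supposed to produce. Second, the orthogonality relation you invoke does not apply: $P(\phi)$ is an envelope computed on the \emph{total space}, so its restriction $P(\phi)_z$ to a fibre is not the fibrewise envelope of $\phi_z$ (each $\phi_z$ is already psh on $X_z$; the loss of positivity is purely in the horizontal directions), and Berman--Boucksom orthogonality says nothing about how $\MA(P(\phi)_z)$ relates to $\MA(\phi_z)$ in this situation --- nor would agreement of two psh metrics on a contact set imply agreement of their Monge--Amp\`ere measures there without additional regularity. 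Your closing paragraph concedes that the essential analytic difficulty is unresolved; the points above explain why the proposed route does not reduce that difficulty, which is consistent with the fact that the paper leaves the statement as a conjecture.
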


\newpage
\bibliographystyle{alpha}
\newcommand{\etalchar}[1]{$^{#1}$}

\end{document}